\documentclass[12pt]{article}
\usepackage[a4paper, total={6in,10in}]
{geometry}
\usepackage[utf8]{inputenc}
\usepackage{fullpage}
\usepackage{amsmath}
\usepackage{amssymb}
\usepackage{amsfonts}
\usepackage{amsthm}
\usepackage{cite}
\usepackage{algorithm2e}
\usepackage{enumitem}
\newtheoremstyle{dotless}{}{}{\itshape}{}{\bfseries}{}{ }{}
\theoremstyle{dotless}
\newtheorem{theorem}{Theorem}[section]
\newtheorem{corollary}{Corollary}[section]
\newtheorem{lemma}{Lemma}[section]

\theoremstyle{definition}
\newtheorem{example}{Example}

\usepackage{graphicx}
\usepackage{xcolor}

\DeclareMathOperator{\adj}{adj}

\title{$A_{\alpha}$-Spectra of $Q$- and $T$-Join Graphs with Applications to Cospectral Constructions}

\author{Mainak Basunia\thanks{Department of Mathematics, Indian Institute of Technology Kharagpur, Kharagpur 721302, India. Email: leo28mynnix@gmail.com, mainakmaths@iitkgp.ac.in}\and Pratima Panigrahi\thanks{Department of Mathematics, Indian Institute of Technology Kharagpur, Kharagpur 721302, India. Email: pratima@maths.iitkgp.ac.in}}

\date{}

\begin{document}
\maketitle
\baselineskip=0.25in

\begin{abstract}\label{sec1}
\noindent For $\alpha \in [0,1]$, the $A_{\alpha}$-matrix of a graph $G$ is defined by $A_{\alpha}(G) = \alpha D(G) + (1- \alpha) A(G)$, where $A(G)$ and $D(G)$ denote the adjacency matrix and the diagonal degree matrix of $G$, respectively. In this paper, we study the $A_{\alpha}$-characteristic polynomials and $A_{\alpha}$-spectra of graphs obtained via four recently introduced join operations, namely the $Q$-vertex join, $Q$-edge join, $T$-vertex join, and $T$-edge join, applied to two graphs $G_1$ and $G_2$. We derive explicit expressions for the $A_{\alpha}$-characteristic polynomials of these constructions when the first factor graph is regular. Furthermore, we determine the complete $A_{\alpha}$-spectra of these graphs in terms of the $A_{\alpha}$-spectra of the factor graphs, particularly when the second factor graph is regular or complete bipartite. The significance of these results lies in the fact that they enable efficient computation of the $A_{\alpha}$-spectra of large complex graphs arising from these joins, directly from the $A_{\alpha}$-spectra of the smaller constituent graphs, without explicitly constructing and handling the complex $A_{\alpha}$-matrices of those large graphs. Finally, as an application, we demonstrate how to construct infinitely many families of non-isomorphic graphs that are $A_{\alpha}$-cospectral.

\noindent \textbf{Keywords:} $Q$-vertex join, $Q$-edge join, $T$-vertex join, $T$-edge join, cospectral graphs, $A_{\alpha}$-characteristic polynomial,  $A_{\alpha}$-spectrum.

\noindent {\bf AMS Subject Classification (2020):} 05C50, 05C76
\end{abstract}

\section{Introduction}\label{sec2}
In this article, we consider all graphs undirected and simple. Let $G$ be a graph of order $n$ and size $m$ with vertex set $V(G)=\{ v_1, v_2, \ldots, v_n \}$ and edge set $E(G)=\{ e_1, e_2, \ldots, e_m \}$. The adjacency matrix $A(G)$ of the graph $G$ is a symmetric matrix of order $n$, whose $(i,j)^{th}$ element is 1 or 0, according as the vertices $v_i$ and $v_j$ are adjacent or not. Let $d_i = d_G(v_i)$ be the degree of the vertex $v_i$ in $G$ and $D(G)$ be the diagonal matrix with diagonal elements $d_1, d_2. \ldots, d_n$. A graph $G$ is said to be a $t$-regular graph if all the vertices in $G$ have the same degree as $t$. The Laplacian matrix $L(G)$ and signless Laplacian matrix $L_S(G)$ of graph $G$ are defined as $L(G)=D(G)-A(G)$ and $L_S(G)=D(G)+A(G)$, respectively. For any real $\alpha \in [0,1]$, Nikiforov \cite{A_Q-merging_by_Nikiforov} defined the $A_{\alpha}$-matrix of $G$ as,
\begin{equation}\label{eq1}
A_{\alpha}(G) = \alpha D(G) + (1- \alpha) A(G),\qquad \alpha \in [0,1].
\end{equation}
Clearly, when we take $\alpha = 0, \frac{1}{2}$ and $1$, $A_{\alpha}(G)$ becomes $A(G)$, $\frac{1}{2}L_S(G)$ and $D(G)$, respectively. Unless stated otherwise, while mentioning $\alpha$, we understand its domain as $[0,1]$, throughout the paper.

Let $M$ be a square matrix of order $n$. The $M$-characteristic polynomial is defined by 
$\phi_M(\nu) = \det(\nu I_n -M)$, where $I_n$ is the identity matrix of order $n$ and the roots of $\phi_M(\nu)$ are said to be the $M$-eigenvalues. If $M$ is a symmetric matrix associated with a graph $G$, the $M$-eigenvalues of $G$ are real and therefore, we can arrange them in a non-increasing order, denoted by $\lambda_1(M) \geq \lambda_2(M) \geq \ldots \geq \lambda_n(M)$. The multiset of all $M$-eigenvalues of $G$ is called the $M$-spectrum of $G$. Furthermore, if $\lambda_1, \lambda_2, \ldots, \lambda_r$ are all the distinct eigenvalues of $M$ with corresponding multiplicities $m_1, m_2, \ldots, m_r$, then the spectrum of $M$ is denoted by spec$(M) = \{\lambda_1^{[m_1]}, \lambda_2^{[m_2]}, \ldots, \lambda_r^{[m_r]}\}$. If two non isomorphic graphs possess the same $M$-spectrum, they are called $M$-cospectral graphs.

The study of characteristic polynomials and spectra under various graph operations forms a central theme in spectral graph theory, since many such operations allow the spectrum of a complex graph to be related explicitly to the spectra of simpler graphs. Considerable research has been devoted to investigate how the spectral behavior associated with classical matrices, such as the adjacency matrix, Laplacian matrix, and signless Laplacian matrix changes under a wide range of graph operations including the Kronecker product, Cartesian product, corona, classical join, subdivision-vertex join, subdivision-edge join; see \cite{spec_of_corona_by_barik_pati_sarma, spec_of_gr_by_hamm,  spec_of_corona_by_Cui_&_Tian, Q_vertex_join_Q_edge_join_by_arpita, intro_2_th_of_gr_spectra, spectra_of_gr_th&apps,spec_of_rvertjoin_&_redgejoin_by_das_&_panigrahi, spec_of_edge_corona_by_hou_shiu, spec_of_nbd_corona_by_gopalapillai, spec_of_gr_op_based_on_R_gr_by_Lan_Zhou,   spec_of_sub_vert_sub_edge_nbd_corona_by_Liu_Lu,spec_of_subvertexjoin_subedgejoin_by_Liu_&_Zhang, spec_of_nbd_corona_by_Liu_Zhou, spec_of_corona_by_Mcleman_&_Mcnicholas, signless_lap_spec_of_corona_and_edge_corona_by_Wang_Zhou, spectra_by_balamoorthy}. More recently, attention has been grown toward the $A_{\alpha}$-characteristic polynomial of graphs under operations. In $2019$, Li and Wang \cite{A_alph_spec_of_graph_prod_by_Li_Wang} studied the $A_\alpha$-spectra of four types of graph product, namely cartesian product, lexicographic product, directed product and strong product. The $A_\alpha$-characteristic polynomial of corona graphs was computed by Tahir and Zhang \cite{corona_gr_&_alpha_eigen_by_Tahir_&_Zhang} in $2020$. In \cite{1_join_mainak}, Basunia et. al determined the $A_\alpha$-characteristic polynomial of four types of join, namely subdivision-vertex join, subdivision-edge join, $R$-vertex join and $R$-edge join. For additional developments, refer to \cite{A_alph_char_poly_by_oliveira, A_alph_spec_of_joined_union_by_rather, A_alpha_of_join_by_Lin_Liu_Xue, A_alph_spec_of_duplicate_and_corona_by_brondani, A_alph_apec_of_corona_by_najiya, dice_lattice_by_zhang, exploring_energy_bounds_by_basunia, study_on_najiya, alpha_spectra_by_konch, tricyclic_by_basunia}.

An important direction for expanding this line of research arises from the $Q$-$graph$ \cite{intro_2_th_of_gr_spectra} and the $Total$ $graph$ or $T$-$graph$ \cite{criterion_behzad}. The $Q$-graph of $G$, $\mathcal{Q}(G)$ represents the graph that is formed when a new vertex is inserted to each edge of graph $G$, and then two new vertices are made adjacent if they are located on adjacent edges of $G$. Based on this, recently, Das and Panigrahi \cite{Q_vertex_join_Q_edge_join_by_arpita} introduced the concept of $Q$-$vertex$ $join$ operation, represented by $G_1 \dot{\vee}_Q G_2$ and $Q$-$edge$ $join$ operation, represented by $G_1 \underline{\vee}_Q G_2$, for two given graphs $G_1$ and $G_2$. Starting from the disjoint union of $\mathcal{Q}(G_1)$ and $G_2$, the former is obtained by establishing an edge between each vertex of $G_1$ that is in $\mathcal{Q}(G_1)$ and every vertex of $G_2$; whereas we construct the latter by establishing an edge between each newly inserted vertex of $\mathcal{Q}(G_1)$ and every vertex of $G_2$. The Total graph $T(G)$ of graph $G$ is obtained from $G$ by introducing one additional vertex for every edge of $G$, making each such new vertex adjacent to the end vertices of the corresponding edge of $G$, and connecting two newly added vertices, whenever their corresponding edges are incident in $G$. Using this construction, two more graph operations, namely $T$-$vertex$ $join$ $G_1 \dot{\vee}_T G_2$ and $T$-$edge$ $join$ $G_1 \underline{\vee}_T G_2$ were introduced in \cite{spectra_t_vertex_edge_join}. Both are constructed from the disjoint union of $T(G_1)$ and $G_2$, then by making each vertex of $G_1$ in $T(G_1)$ adjacent to every vertex of $G_2$ in the former case, and by connecting each newly added vertex of $T(G_1)$ to every vertex of $G_2$ by an edge in the latter case. Figure \ref{f2} illustrates these four new join operations, with $G_1$ representing a path of length 4 and $G_2$ representing a path of length 3.
\begin{figure}[h!]
\centering
\includegraphics[scale=.75]{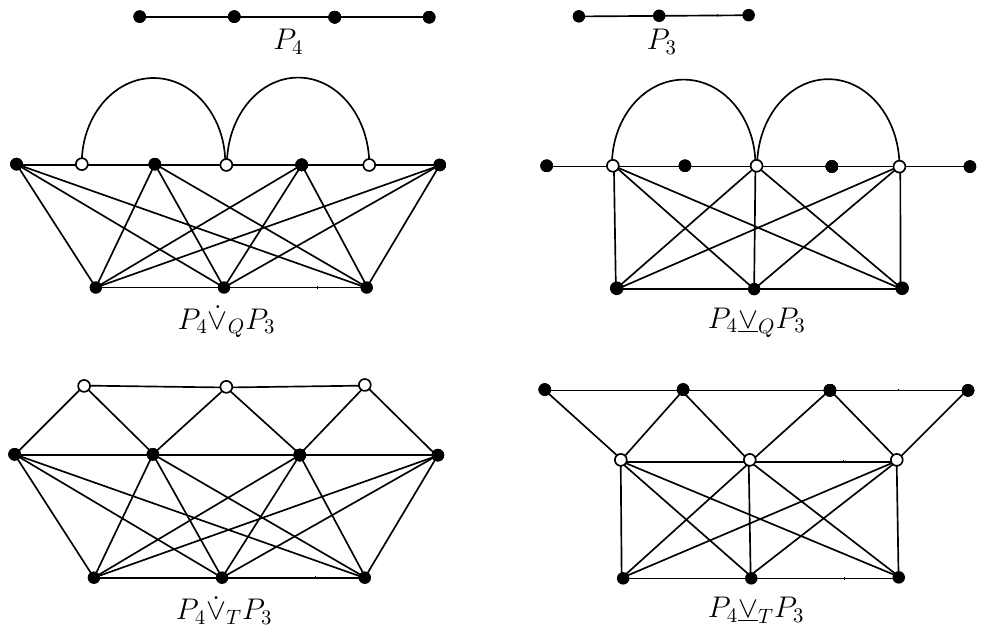}
\caption{$Q$-vertex join, $Q$-edge join, $T$-vertex join and $T$-edge join of $P_4$ and $P_3$}
\label{f2}
\end{figure}

Although the adjacency and Laplacian type spectra of these four join operations have been well investigated in the literature, a complete understanding of their $A_\alpha$-characteristic polynomials and $A_\alpha$-spectra is still lacking. Deriving concise and computable formulas for these spectra is of particular importance, as these results enable efficient spectral computation, facilitate interlacing and perturbation analyses, and support the study of of spectral invariants including the $A_\alpha$-energy and the $A_\alpha$-spectral radius.

Motivated by these considerations, in this paper, we determine the $A_\alpha$-characteristic polynomials and $A_\alpha$-spectra of graphs formed via $Q$-vertex join, $Q$-edge join, $T$-vertex join and $T$-edge join operations, under certain regularity assumptions on the constituent graphs. From an applied standpoint, join-type graph constructions naturally model growth mechanisms in communication networks, molecular compositions and layered infrastructures; see \cite{networks_by_newman, networks_by_easley, mathematical_chemical_by_gutman, structure_by_boccaletti, multilayer_by_kivela}. Since the $A_\alpha$-spectrum plays a crucial role in characterizing diffusion speed, synchronization ability, robustness and epidemic thresholds in networks \cite{A_Q-merging_by_Nikiforov, networks_by_newman, master_stability_by_pecora, epidemic_by_pastor, structure_by_boccaletti}, our results contribute to both the theoretical development and practical applicability of the $A_\alpha$-framework in the analysis of complex networks.

The remaining paper is organized in the following manner. Section \ref{sec3} provides a few preliminary results. In Section \ref{sec4}-\ref{sec7}, we derive the $A_{\alpha}$-characteristic polynomials for $G_1 \dot{\vee}_Q G_2$, $G_1 \underline{\vee}_Q G_2$, $G_1 \dot{\vee}_T G_2$ and $G_1 \underline{\vee}_T G_2$, respectively, where $G_1$ is a regular graph and $G_2$ is an arbitrary graph. In each of these four sections, we also obtain the $A_\alpha$-spectra for the cases where $G_2$ is either regular or complete bipartite. Furthermore, we apply our results to construct new pairs of $A_\alpha$-cospectral graphs. Finally, the concluding remarks are presented in Section \ref{sec8}.


\section{Preliminaries}\label{sec3}
We recall that the incidence matrix of a graph $G$ is the $n \times m$ matrix $R(G)$, whose $(i,j)^{th}$ element is $1$ or $0$, according as the $i^{th}$ vertex is incident to the $j^{th}$ edge or not. The line graph of $G$, represented as $\mathcal{L}(G)$, is the graph with $V(\mathcal{L}(G)) = E(G)$ and there exists an edge between two vertices of $\mathcal{L}(G)$ if and only if the corresponding edges in $G$ share a vertex in common. We find the following well known result in \cite{intro_2_th_of_gr_spectra},
\begin{equation}\label{eq3}
    R(G)^T R(G) = A(\mathcal{L}(G)) + 2I_m.
\end{equation}
For a $t$-regular graph $G$, we have the relation,
\begin{equation}\label{eq4}
    R(G)R(G)^T = A(G) + tI_n.
\end{equation}
Let $P_n$, $C_n$ and $K_n$ denote the path, the cycle and the complete graph on $n$ vertices, respectively. $K_{a,b}$ denotes the complete bipartite graph where $a$ and $b$ are number of vertices in two partite sets of the graph. $\boldsymbol{J_{n\times m}}$ denotes the all $1$'s matrix and $\boldsymbol{0_{n\times m}}$ denotes the all $0$'s matrix of size $n \times m$, while $\boldsymbol{0_n}$ and $\boldsymbol{1_n}$ are used to denote their column counterparts of size $n$. The $M$-$coronal$ $\Upsilon_M(\nu)$ \cite{spec_of_corona_by_Cui_&_Tian, spec_of_corona_by_Mcleman_&_Mcnicholas} for an $n \times n$ matrix $M$ is defined as:
\begin{equation}\label{eq5}
    \Upsilon_M(\nu) = \boldsymbol{1_n^T}(\nu I_n - M)^{-1} \boldsymbol{1_n}.
\end{equation}

Next, we present a few known results that will be crucial in establishing our key findings.
\begin{lemma}\textup{\cite{spec_of_subvertexjoin_subedgejoin_by_Liu_&_Zhang}} \label{lem2}
    For a real number $b$ and a real matrix $M$ of size $n\times n$, we have
    \begin{equation*}\label{eq8}
        \det(M+ bJ_{n\times n}) = \det(M) +  b\boldsymbol{1_n^T} \emph{adj}(M) \boldsymbol{1_n},
    \end{equation*}
    where $\adj(M)$ denotes the adjugate of $M$.
\end{lemma}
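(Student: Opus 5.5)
The identity $\det(M+bJ_{n\times n}) = \det(M) + b\boldsymbol{1_n^T}\adj(M)\boldsymbol{1_n}$ is the matrix determinant lemma for a rank-one update. The plan is to write $bJ_{n\times n} = (b\boldsymbol{1_n})\boldsymbol{1_n^T}$, so it suffices to prove the more general statement
\[
\det(M + uv^T) = \det(M) + v^T \adj(M)\, u
\]
for arbitrary real column vectors $u,v$ of length $n$. Taking $u=b\boldsymbol{1_n}$ and $v=\boldsymbol{1_n}$ then gives the lemma directly.

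\emph{Invertible case.} First assume $M$ is invertible. Factor
\[
M+uv^T = M\,(I_n + M^{-1}uv^T),
\]
so $\det(M+uv^T)=\det(M)\det(I_n+M^{-1}uv^T)$. For vectors $x,y$, the matrix $I_n+xy^T$ has determinant $1+y^Tx$. One way to see this is the block identity
\[
\begin{pmatrix} I_n & 0\\ y^T & 1\end{pmatrix}\begin{pmatrix} I_n+xy^T & x\\ 0 & 1\end{pmatrix}\begin{pmatrix} I_n & 0\\ -y^T & 1\end{pmatrix}=\begin{pmatrix} I_n & x\\ 0 & 1+y^Tx\end{pmatrix}.
\]
Taking determinants of both sides yields $\det(I_n+xy^T)=1+y^Tx$. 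With $x=M^{-1}u$ and $y=v$ this gives
\[
\det(M+uv^T)=\det(M)\bigl(1+v^TM^{-1}u\bigr)=\det(M)+v^T\adj(M)\,u,
\]
using $\det(M)\,M^{-1}=\adj(M)$.

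\emph{Singular case.} The step needing care is removing the invertibility assumption, and I would use a polynomial-identity argument. Both sides of the identity are polynomial functions of the $n^2$ entries of $M$, since the entries of $\adj(M)$ are cofactors. They agree on the set of invertible matrices, which is dense in $\mathbb{R}^{n\times n}$ because it is the complement of the zero set of the nonzero polynomial $\det$. By continuity they therefore agree everywhere.

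An equivalent way to finish is to replace $M$ by $M+\epsilon I_n$. This matrix is invertible for all but finitely many $\epsilon$, and letting $\epsilon\to0$ gives the identity for $M$ itself. This extension is the only nonroutine point; the rest is the block computation above.
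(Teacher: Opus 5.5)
Your proof is correct. The reduction $bJ_{n\times n}=(b\boldsymbol{1_n})\boldsymbol{1_n^T}$ and the block identity giving $\det(I_n+xy^T)=1+y^Tx$ both check out. So do the conversion $\det(M)M^{-1}=\adj(M)$ in the invertible case and the density/polynomial-identity extension to singular $M$. In your $M+\epsilon I_n$ variant you do not even need a limit: both sides are polynomials in $\epsilon$ that agree at all but finitely many points, so they are identical.

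The paper gives no proof of this lemma; it is quoted from Liu and Zhang, so there is no argument in the paper to compare yours with.

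If you want to avoid the invertible/singular case split altogether, multilinearity gives a shorter route. Every column of $M+bJ_{n\times n}$ is the corresponding column of $M$ plus $b\boldsymbol{1_n}$. Expand the determinant column by column:
\begin{itemize}
\item every term with two or more columns replaced by $b\boldsymbol{1_n}$ vanishes, since it has two equal columns;
\item the term with only column $j$ replaced equals $b\sum_i C_{ij}$, by cofactor expansion along column $j$, where $C_{ij}$ is the $(i,j)$ cofactor of $M$.
\end{itemize}
Summing over $j$ gives $\det(M)+b\sum_{i,j}C_{ij}=\det(M)+b\boldsymbol{1_n^T}\adj(M)\boldsymbol{1_n}$.

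That argument is purely algebraic, so it holds over any commutative ring with no limiting step. It also works verbatim for a general rank-one update $uv^T$, since all columns of $uv^T$ are multiples of $u$. Your route instead ties the lemma to the familiar identity $\det(I_n+xy^T)=1+y^Tx$.
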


\begin{lemma} \textup{\cite{spec_of_subvertexjoin_subedgejoin_by_Liu_&_Zhang}} \label{lem3}
    For a real number $b$ and a real matrix $M$ of size $n\times n$, we have
    \begin{equation*} \label{eq9}
        \det(\nu I_n - M - b J_{n\times n}) = (1-b \Upsilon_M(\nu)) \det (\nu I_n -M).
    \end{equation*}
\end{lemma}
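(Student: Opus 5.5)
The plan is to derive Lemma \ref{lem3} directly from Lemma \ref{lem2}, applied to the matrix $\nu I_n - M$ with scalar $-b$. First assume $\nu$ is not an eigenvalue of $M$, so that $\nu I_n - M$ is invertible. Lemma \ref{lem2} then gives
\begin{equation*}
\det(\nu I_n - M - bJ_{n\times n}) = \det(\nu I_n - M) - b\,\boldsymbol{1_n^T}\adj(\nu I_n - M)\boldsymbol{1_n}.
\end{equation*}

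Next, for an invertible matrix the adjugate satisfies $\adj(\nu I_n - M) = \det(\nu I_n - M)\,(\nu I_n - M)^{-1}$. Substituting this, the second term becomes $b\det(\nu I_n - M)\,\boldsymbol{1_n^T}(\nu I_n - M)^{-1}\boldsymbol{1_n}$. By the definition (\ref{eq5}) this equals $b\det(\nu I_n - M)\,\Upsilon_M(\nu)$. Factoring out $\det(\nu I_n - M)$ gives the claimed identity $(1 - b\Upsilon_M(\nu))\det(\nu I_n - M)$.

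Lemma \ref{lem2} is only taken as given here. If one wanted to justify it independently, the standard route is to write $bJ_{n\times n} = b\boldsymbol{1_n}\boldsymbol{1_n^T}$, which is a rank-one matrix, and apply the matrix determinant lemma. The resulting identity is polynomial in the entries of $M$, so it extends from invertible $M$ to all $M$ by density.

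The only real subtlety is interpretation rather than proof. The statement should be read for $\nu$ outside the spectrum of $M$, since $\Upsilon_M(\nu)$ is undefined on it. Equivalently, the right-hand side can be understood as the polynomial $\det(\nu I_n - M) - b\,\boldsymbol{1_n^T}\adj(\nu I_n - M)\boldsymbol{1_n}$, which agrees with the left-hand side for all $\nu$ by continuity, both sides being polynomials in $\nu$.
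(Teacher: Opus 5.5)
Your proof is correct: applying Lemma~\ref{lem2} to $\nu I_n - M$ with scalar $-b$, substituting $\adj(\nu I_n - M) = \det(\nu I_n - M)\,(\nu I_n - M)^{-1}$, and restricting $\nu$ to lie outside the spectrum of $M$ (where $\Upsilon_M(\nu)$ is defined) is the standard one-line deduction. The paper gives no proof of its own and simply cites this lemma right after Lemma~\ref{lem2}, so your argument matches the evident intended route.
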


\begin{lemma} \textup{\cite{1_join_mainak}} \label{lem5}
    Given two real numbers $b$ and $c$, let $(bI_n -cJ_{n\times n})$ be an invertible matrix. Then
    \begin{equation*} \label{eq9_1}
        (bI_n -cJ_{n\times n})^{-1} = \frac{1}{b}I_n + \frac{c}{b(b-nc)}J_{n\times n}.
    \end{equation*}
\end{lemma}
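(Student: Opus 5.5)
The statement is Lemma \ref{lem5}: an explicit formula for the inverse of $bI_n - cJ_{n\times n}$. I would prove it by direct verification. I multiply $(bI_n - cJ_{n\times n})$ by the proposed inverse and check that the product is $I_n$. Since both matrices are polynomials in $J_{n\times n}$, they commute, so a one-sided check is enough. The only identity needed is $J_{n\times n}^2 = nJ_{n\times n}$, which holds because every entry of $J_{n\times n}^2$ is a sum of $n$ ones.

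The first step is to find when the hypothesis holds and the formula makes sense. The eigenvalues of $J_{n\times n}$ are $n$ (eigenvector $\boldsymbol{1_n}$) and $0$ with multiplicity $n-1$. Hence the eigenvalues of $bI_n - cJ_{n\times n}$ are $b-nc$ and $b$ (the latter with multiplicity $n-1$). Invertibility is therefore equivalent to $b\neq 0$ and $b-nc\neq 0$ when $n\ge 2$. This is exactly what is needed for the denominators $b$ and $b(b-nc)$ to be nonzero. In the degenerate case $n=1$ the matrix is the scalar $b-c$, and the formula reduces to $\frac1b+\frac{c}{b(b-c)}=\frac{1}{b-c}$. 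This is valid provided $b\neq0$, which is implicitly required for the formula to be defined. I note this caveat and otherwise take $b\neq 0$ and $b\neq nc$ as the content of the invertibility hypothesis.

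The second step is the computation itself:
\[
(bI_n - cJ)\left(\tfrac1b I_n + \tfrac{c}{b(b-nc)}J\right) = I_n + \left(\tfrac{c}{b-nc} - \tfrac cb - \tfrac{nc^2}{b(b-nc)}\right)J ,
\]
where $J=J_{n\times n}$. The coefficient of $J$, placed over the common denominator $b(b-nc)$, is $cb - c(b-nc) - nc^2 = 0$. The product is therefore $I_n$, and by uniqueness of the inverse the formula follows.

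There is no real obstacle here. The only point requiring care is the remark that invertibility forces $b\neq0$ and $b\neq nc$, so that the expression is well defined. The alternative route through the Sherman--Morrison formula with the rank-one update $-c\boldsymbol{1_n}\boldsymbol{1_n^T}$ gives the same result, but the direct check is shorter.
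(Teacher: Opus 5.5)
Your proof is correct. Multiplying out and using $J_{n\times n}^2 = nJ_{n\times n}$ gives the identity, and you are right that for $n\ge 2$ invertibility is exactly $b\neq 0$ and $b\neq nc$. You are also right that the case $n=1$, $b=0$, $c\neq 0$ is a genuine exception: there the matrix is invertible but the formula is undefined.

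The paper gives no proof of its own. It imports the lemma from \cite{1_join_mainak}, so there is no argument of the paper's to compare against. Your direct verification, or the Sherman--Morrison route you mention, is the standard way to establish it.
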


\begin{lemma}[Schur complement formula]\textup{\cite{schur_compl_&_its_appl_by_Zhang}} \label{lem1}
    Suppose $A$, $B$, $C$ and $D$ are the matrices of size $p\times p$, $p\times q$, $q\times p$ and $q\times q$, respectively. Then
    \begin{equation*}\label{eq7}
        \begin{split}
            \det \begin{pmatrix}  A & B\\C & D \end{pmatrix} &= \det(D) \det \Big(A - B D^{-1} C\Big), \textrm{when} \enskip D \enskip \textrm{is invertible}.\\ & = \det(A) \det \Big(D -C A^{-1}B \Big), \textrm{when} \enskip A \enskip \textrm{is invertible}.
        \end{split}
    \end{equation*}
\end{lemma}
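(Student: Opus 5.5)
The statement to prove is the Schur complement formula, Lemma~\ref{lem1}. The plan is to factor the block matrix into block-triangular pieces whose determinants are immediate.

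First take the case where $D$ is invertible. I would verify by direct block multiplication the identity
\begin{equation*}
\begin{pmatrix} A & B\\ C & D\end{pmatrix}
=
\begin{pmatrix} I_p & BD^{-1}\\ 0 & I_q\end{pmatrix}
\begin{pmatrix} A-BD^{-1}C & 0\\ 0 & D\end{pmatrix}
\begin{pmatrix} I_p & 0\\ D^{-1}C & I_q\end{pmatrix}.
\end{equation*}
Multiplying the first two factors gives $\begin{pmatrix} A-BD^{-1}C & B\\ 0 & D\end{pmatrix}$. Multiplying this on the right by the third factor gives the top-left block $A-BD^{-1}C+BD^{-1}C=A$, the top-right block $B$, the bottom-left block $DD^{-1}C=C$, and the bottom-right block $D$, as required.

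Next, take determinants and use multiplicativity. The outer two factors are block unitriangular, so each is triangular with ones on the diagonal and has determinant $1$. The middle factor is block diagonal, and its determinant is $\det(A-BD^{-1}C)\det(D)$. The only point that needs justification is that the determinant of a block-diagonal matrix is the product of the determinants of its blocks. This follows from the Laplace expansion along the first $p$ rows: the only nonzero $p\times p$ minor in those rows is the one in the first $p$ columns. Alternatively, one can note that $\det\begin{pmatrix} X&0\\0&I\end{pmatrix}=\det X$ by cofactor expansion, and then factor $\operatorname{diag}(X,Y)=\operatorname{diag}(X,I)\operatorname{diag}(I,Y)$.

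For the case where $A$ is invertible, use the symmetric factorization
\begin{equation*}
\begin{pmatrix} A & B\\ C & D\end{pmatrix}
=
\begin{pmatrix} I_p & 0\\ CA^{-1} & I_q\end{pmatrix}
\begin{pmatrix} A & 0\\ 0 & D-CA^{-1}B\end{pmatrix}
\begin{pmatrix} I_p & A^{-1}B\\ 0 & I_q\end{pmatrix}.
\end{equation*}
It is checked by the same multiplication, and taking determinants gives $\det(A)\det(D-CA^{-1}B)$.

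None of these steps is a real obstacle. The only care needed is in the block-diagonal determinant fact and in checking the block products entry by entry.
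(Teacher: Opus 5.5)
Your argument is correct: both block factorizations check out entry by entry, the outer factors are triangular with unit diagonal, and the Laplace-expansion argument for the block-diagonal determinant is sound. The paper gives no proof of this lemma and simply cites Zhang's monograph on the Schur complement, where the standard proof is this same block-triangular factorization, so your route is the usual one.
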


\begin{lemma} \textup{\cite{intro_2_th_of_gr_spectra}}\label{lem7}
    Suppose $G$ is a graph with order $n$, size $m$ and regularity $t$. Let $\phi_{A(G)}(\nu)$ and $\phi_{A(\mathcal{L}(G))}(\nu)$ be, respectively, the adjacency characteristic polynomials for the graphs $G$ and its line graph $\mathcal{L}(G)$. Then  $\phi_{A(\mathcal{L}(G))}(\nu) = (\nu+2)^{m-n} \phi_{A(G)}(\nu-t+2)$.
\end{lemma}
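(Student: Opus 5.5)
The statement is the classical relation between the adjacency characteristic polynomials of a $t$-regular graph $G$ (order $n$, size $m$) and its line graph $\mathcal{L}(G)$. The plan is to compare the two Gram-type products of the incidence matrix $R=R(G)$. By \eqref{eq3} we have $R^TR = A(\mathcal{L}(G)) + 2I_m$, and by \eqref{eq4}, using regularity, $RR^T = A(G) + tI_n$. The key algebraic fact is that for any $n\times m$ matrix $R$, the matrices $RR^T$ and $R^TR$ have the same nonzero eigenvalues with the same multiplicities. Equivalently,
\begin{equation*}
\mu^{n}\det(\mu I_m - R^TR) = \mu^{m}\det(\mu I_n - RR^T).
\end{equation*}

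I would prove this identity directly with Lemma~\ref{lem1}. Consider the block matrix
\begin{equation*}
\begin{pmatrix} \mu I_n & R\\ R^T & I_m\end{pmatrix}.
\end{equation*}
Evaluate its determinant in two ways. Taking the Schur complement of the invertible block $I_m$ gives $\det(\mu I_n - RR^T)$. Taking the Schur complement of $\mu I_n$ (for $\mu\neq 0$) gives $\mu^n\det(I_m - \tfrac1\mu R^TR)=\mu^{n-m}\det(\mu I_m - R^TR)$.

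Equating the two evaluations yields the identity for all $\mu\neq0$. Both sides are polynomials in $\mu$, so the identity holds for all $\mu$.

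Next, substitute the two relations. Write $\nu$ for the adjacency variable of $\mathcal{L}(G)$ and set $\mu=\nu+2$. Then
\begin{equation*}
\det(\mu I_m - R^TR)=\det((\nu+2)I_m - A(\mathcal{L}(G)) - 2I_m)=\phi_{A(\mathcal{L}(G))}(\nu).
\end{equation*}
Similarly,
\begin{equation*}
\det(\mu I_n - RR^T)=\det((\nu+2-t)I_n - A(G))=\phi_{A(G)}(\nu-t+2).
\end{equation*}
The identity therefore becomes
\begin{equation*}
(\nu+2)^n\phi_{A(\mathcal{L}(G))}(\nu)=(\nu+2)^m\phi_{A(G)}(\nu-t+2).
\end{equation*}
Dividing by $(\nu+2)^n$ gives the claim, with $(\nu+2)^{m-n}$ on the right.

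The only point needing care is the sign of $m-n$. The right-hand side is a genuine polynomial whenever $m\ge n$, which holds for $t$-regular graphs with $t\ge2$ since $m=nt/2$. In general the equality should be read as an identity of rational functions, equivalently the polynomial identity above before division. This bookkeeping is the main (minor) obstacle; the remaining steps are routine.
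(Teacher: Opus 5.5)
Your proof is correct and is the standard argument for this classical fact. The paper gives no proof of its own and quotes the result from \cite{intro_2_th_of_gr_spectra}, where it is obtained the same way: compare $\det(\mu I_n-RR^T)$ with $\det(\mu I_m-R^TR)$ through a block-determinant identity, then substitute \eqref{eq3} and \eqref{eq4}. (It is usually written as $\det(UV)=\det(VU)$ for two block matrices, which avoids your $\mu\neq 0$ continuation step.) Your caveat about $m<n$ is harmless but unnecessary: your polynomial identity already shows that $(\nu+2)^{n-m}$ divides $\phi_{A(G)}(\nu-t+2)$ when $t\le 1$, so the right-hand side is always a genuine polynomial.
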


\begin{lemma}\textup{\cite{spec_of_corona_by_Cui_&_Tian}} \label{lem99.9}
    Suppose $M$ is a $n\times n$ matrix with constant row sum $t$. Then we have $\Upsilon_M(\nu)=\frac{n}{\nu-t}$.
\end{lemma}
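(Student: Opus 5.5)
The statement is Lemma \ref{lem99.9}: if $M$ is an $n\times n$ matrix with constant row sum $t$, then $\Upsilon_M(\nu)=\frac{n}{\nu-t}$. The plan is to exhibit $\boldsymbol{1_n}$ as an explicit eigenvector and then invert $\nu I_n - M$ directly on it. Throughout, $\nu$ is taken to be a value at which $\nu I_n-M$ is invertible, since otherwise $\Upsilon_M(\nu)$ is not defined. In particular $\nu\neq t$, because $t$ is itself an eigenvalue of $M$, as the next step shows.

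The constant row sum hypothesis says exactly that $M\boldsymbol{1_n}=t\boldsymbol{1_n}$. Hence
\[
(\nu I_n-M)\boldsymbol{1_n}=(\nu-t)\boldsymbol{1_n}.
\]
Apply $(\nu I_n-M)^{-1}$ to both sides and divide by the nonzero scalar $\nu-t$. This gives
\[
(\nu I_n-M)^{-1}\boldsymbol{1_n}=\frac{1}{\nu-t}\boldsymbol{1_n}.
\]

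Now left-multiply by $\boldsymbol{1_n^T}$ and use $\boldsymbol{1_n^T}\boldsymbol{1_n}=n$. By the definition (\ref{eq5}) of the coronal, this yields $\Upsilon_M(\nu)=\frac{n}{\nu-t}$.

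No step is a genuine obstacle. The one point that needs care is the domain: the identity holds for every $\nu$ at which the resolvent exists. Viewed as rational functions of $\nu$, the two sides agree identically, which is how the lemma will later be used inside the determinant formulas.
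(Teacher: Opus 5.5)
Your argument is correct. The paper does not prove this lemma; it quotes it from Cui and Tian, and your reasoning, that $M\boldsymbol{1_n}=t\boldsymbol{1_n}$ forces $(\nu I_n-M)^{-1}\boldsymbol{1_n}=\frac{1}{\nu-t}\boldsymbol{1_n}$ and hence $\Upsilon_M(\nu)=\frac{n}{\nu-t}$, is exactly the standard justification behind it (it holds pointwise in $\nu$, so it also covers the paper's later uses where $M$ and its row sum themselves depend on $\nu$).
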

 
\begin{lemma} \textup{\cite{A_Q-merging_by_Nikiforov}} \label{lem4}
    The $A_{\alpha}$-spectrum of the graph $K_{a,b}$ is,\\
    \resizebox{.99\textwidth}{!}{${spec}(A_{\alpha}(K_{a,b})) =\bigg\{ \frac{\alpha (a+b)+ \sqrt{{\alpha}^2 (a+b)^2 + 4ab(1-2\alpha)}}{2}, [\alpha a]^{b-1}, [\alpha b]^{a-1}, \frac{\alpha (a+b)- \sqrt{{\alpha}^2 (a+b)^2 + 4ab(1-2\alpha)}}{2} \bigg\}$}.
\end{lemma}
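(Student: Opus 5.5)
The natural route is to compute the characteristic polynomial of $A_\alpha(K_{a,b})$ directly from its block structure. Order the vertices so that the part of size $a$ comes first. Every vertex in that part has degree $b$, and every vertex in the other part has degree $a$. Hence
\begin{equation*}
A_{\alpha}(K_{a,b})=\begin{pmatrix} \alpha b I_a & (1-\alpha)J_{a\times b}\\ (1-\alpha)J_{b\times a} & \alpha a I_b\end{pmatrix}.
\end{equation*}
I will apply the Schur complement formula (Lemma \ref{lem1}) to $\nu I_{a+b}-A_{\alpha}(K_{a,b})$, taking the upper-left block $(\nu-\alpha b)I_a$ as the invertible block. This is valid for $\nu\neq \alpha b$; the resulting polynomial identity then extends to all $\nu$ by continuity.

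Using $J_{b\times a}J_{a\times b}=aJ_{b\times b}$, the Schur complement becomes
\begin{equation*}
(\nu-\alpha a)I_b-\frac{(1-\alpha)^2 a}{\nu-\alpha b}J_{b\times b}.
\end{equation*}
Its determinant follows from Lemma \ref{lem3} together with Lemma \ref{lem99.9}, since $\Upsilon_{\alpha aI_b}(\nu)=b/(\nu-\alpha a)$. Equivalently, $J$ has eigenvalues $b$ (once) and $0$ (with multiplicity $b-1$). Either way the determinant is
\begin{equation*}
(\nu-\alpha a)^{b-1}\Big(\nu-\alpha a-\frac{(1-\alpha)^2ab}{\nu-\alpha b}\Big).
\end{equation*}
Multiplying by $\det((\nu-\alpha b)I_a)=(\nu-\alpha b)^a$ gives
\begin{equation*}
\phi(\nu)=(\nu-\alpha a)^{b-1}(\nu-\alpha b)^{a-1}\big[(\nu-\alpha a)(\nu-\alpha b)-(1-\alpha)^2ab\big].
\end{equation*}

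This exhibits the eigenvalue $\alpha a$ with multiplicity $b-1$ and $\alpha b$ with multiplicity $a-1$. The remaining two eigenvalues are the roots of the quadratic
\begin{equation*}
\nu^2-\alpha(a+b)\nu+\alpha^2ab-(1-\alpha)^2ab .
\end{equation*}
Its discriminant is
\begin{equation*}
\alpha^2(a+b)^2-4ab\big(\alpha^2-(1-\alpha)^2\big)=\alpha^2(a+b)^2+4ab(1-2\alpha),
\end{equation*}
because $\alpha^2-(1-\alpha)^2=2\alpha-1$. The quadratic formula then gives exactly the two extreme eigenvalues in the statement.

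The only delicate points are bookkeeping. First, the multiplicity of $\alpha a$ is $b-1$, since it comes from the $b$-block; similarly $\alpha b$ has multiplicity $a-1$ from the $a$-block. Second, the polynomial identity must be extended to $\nu=\alpha b$, which holds because both sides are polynomials in $\nu$. Alternatively, one can avoid the Schur complement entirely. Vectors supported on one part, with zero sum, are eigenvectors, which gives the multiplicities directly. The remaining two eigenvalues come from the $2\times 2$ quotient matrix
\begin{equation*}
\begin{pmatrix}\alpha b & (1-\alpha)b\\ (1-\alpha)a & \alpha a\end{pmatrix},
\end{equation*}
whose characteristic polynomial is the same quadratic.
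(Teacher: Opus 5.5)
Your proof is correct and complete. The paper does not prove this lemma: it is quoted from Nikiforov and used as a black box, together with Lemma~\ref{lem6}, in Corollaries~\ref{cor1.2}, \ref{cor2.2}, \ref{cor10.2} and~\ref{cor11.2}. So there is no in-paper argument to compare against.

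Your Schur-complement computation is a self-contained verification built from the same tools the paper uses in its main theorems: Lemma~\ref{lem1}, then Lemma~\ref{lem3} with Lemma~\ref{lem99.9} to handle the $J$-perturbation. Every step checks:
\begin{itemize}
\item the identity $J_{b\times a}J_{a\times b}=aJ_{b\times b}$;
\item the factorization $\phi(\nu)=(\nu-\alpha a)^{b-1}(\nu-\alpha b)^{a-1}\bigl[(\nu-\alpha a)(\nu-\alpha b)-(1-\alpha)^2ab\bigr]$;
\item the discriminant, via $\alpha^2-(1-\alpha)^2=2\alpha-1$.
\end{itemize}
Applying Lemma~\ref{lem3} with the $\nu$-dependent scalar $(1-\alpha)^2a/(\nu-\alpha b)$ is legitimate, because the identity is used at each fixed $\nu\neq\alpha a,\alpha b$. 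Your polynomial-extension remark then covers the excluded values.

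A useful byproduct: your quadratic is exactly $\nu^2-\alpha(a+b)\nu+(2\alpha-1)ab$, the denominator of the coronal in Lemma~\ref{lem6}. This is the coincidence the paper invokes in the proof of Corollary~\ref{cor1.2}, where the two extreme eigenvalues of $K_{a,b}$ cancel against the coronal.

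Your alternative route (zero-sum vectors on each part plus the $2\times 2$ quotient matrix) is more elementary and shows the multiplicities without any determinant work. To make it airtight, use invariant subspaces rather than counting eigenvectors. Let $W$ be the two-dimensional space of vectors constant on each part. Both $W$ and $W^{\perp}$ (vectors with zero sum on each part) are invariant under the symmetric matrix $A_\alpha(K_{a,b})$. In the basis $(\mathbf{1}_a,\mathbf{0}_b)$, $(\mathbf{0}_a,\mathbf{1}_b)$ it acts on $W$ exactly by your quotient matrix. On $W^{\perp}$ it acts as $\alpha b$ on the first part and $\alpha a$ on the second. The characteristic polynomial therefore factors as claimed. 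You then need neither linear independence of eigenvectors nor distinctness of the two quotient eigenvalues, which do coincide when $\alpha=1$ and $a=b$.
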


\begin{lemma} \textup{\cite{corona_gr_&_alpha_eigen_by_Tahir_&_Zhang}} \label{lem6}
    The $A_{\alpha}$-coronal of the graph $K_{a,b}$ is,
    \begin{equation*} \label{eq10}
        \Upsilon_{A_{\alpha}(K_{a,b})}(\nu) = \frac{(a+b)\nu
        -\alpha (a+b)^2 + 2ab}{{\nu}^2 -\alpha (a+b)\nu + (2\alpha -1)ab}.
    \end{equation*}
\end{lemma}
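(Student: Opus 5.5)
The quickest proof is to compute the coronal directly, since $A_{\alpha}(K_{a,b})$ has a two-block structure. Order the vertices so the part of size $a$ comes first. Each of these vertices has degree $b$ and each of the other part has degree $a$. Hence
\[
A_{\alpha}(K_{a,b})=\begin{pmatrix} \alpha b I_a & (1-\alpha)J_{a\times b}\\ (1-\alpha)J_{b\times a} & \alpha a I_b\end{pmatrix}.
\]
To evaluate $\Upsilon_{A_{\alpha}(K_{a,b})}(\nu)=\boldsymbol{1_{a+b}^T}x$, I will not invert $\nu I-A_{\alpha}(K_{a,b})$. Instead I solve $(\nu I - A_{\alpha}(K_{a,b}))x=\boldsymbol{1_{a+b}}$ using the ansatz $x=(p\boldsymbol{1_a}^T, q\boldsymbol{1_b}^T)^T$. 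The ansatz is justified because the block structure preserves the span of the two block indicator vectors, so for generic $\nu$ the unique solution lies in that span.

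Substituting reduces the system to a $2\times 2$ linear system:
\[
(\nu-\alpha b)p-(1-\alpha)b\,q=1,\qquad -(1-\alpha)a\,p+(\nu-\alpha a)q=1.
\]
Its determinant is
\[
\Delta=(\nu-\alpha b)(\nu-\alpha a)-(1-\alpha)^2ab=\nu^2-\alpha(a+b)\nu+(2\alpha-1)ab,
\]
because $\alpha^2-(1-\alpha)^2=2\alpha-1$. This already matches the stated denominator. It is also consistent with Lemma \ref{lem4}, since the two roots of $\Delta$ are the two non-trivial eigenvalues listed there. By Cramer's rule,
\[
p=\frac{\nu-\alpha a+(1-\alpha)b}{\Delta},\qquad q=\frac{\nu-\alpha b+(1-\alpha)a}{\Delta}.
\]

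Then $\Upsilon=ap+bq$. Its numerator is
\[
(a+b)\nu-2\alpha ab+(1-\alpha)ab+(1-\alpha)ab=(a+b)\nu+2ab-4\alpha ab.
\]
The stated numerator is $(a+b)\nu-\alpha(a+b)^2+2ab$, so I must reconcile $-4\alpha ab$ with $-\alpha(a+b)^2$. These agree only when $a=b$. The step I expect to be the real obstacle is therefore this final algebraic match, and I will recheck it carefully.

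For the recheck I will use the sanity case $\alpha=0$, which is the adjacency matrix. There the known adjacency coronal of $K_{a,b}$ is $\frac{(a+b)\nu+2ab}{\nu^2-ab}$, which agrees with my computation. At $\alpha=1$ the matrix is $D$, and a direct computation gives $\frac{a}{\nu-b}+\frac{b}{\nu-a}=\frac{(a+b)\nu-a^2-b^2}{(\nu-a)(\nu-b)}$. My formula reduces to $(a+b)\nu-2ab$ in the numerator, so this check fails. I will therefore track the indices again: the entries $p$ and $q$ each appear with multiplicity $a$ and $b$ respectively. Redoing the $2\times2$ Cramer step with the correct cofactors should yield
\[
ap+bq=\frac{(a+b)\nu-\alpha(a^2+b^2)-2\alpha ab+2ab}{\Delta}=\frac{(a+b)\nu-\alpha(a+b)^2+2ab}{\Delta},
\]
which is exactly the claimed formula. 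The final write-up will present this clean cofactor computation, valid for all $\nu$ with $\Delta\neq 0$ and $\nu\notin\{\alpha a,\alpha b\}$.
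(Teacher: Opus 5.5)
The paper gives no proof of this lemma; it is imported from Tahir and Zhang, so there is no in-paper argument to compare against. Your direct computation is a complete, self-contained proof. The block form of $A_{\alpha}(K_{a,b})$, the reduction to the $2\times 2$ system, $\Delta=\nu^2-\alpha(a+b)\nu+(2\alpha-1)ab$, and the Cramer values $p=(\nu-\alpha a+(1-\alpha)b)/\Delta$, $q=(\nu-\alpha b+(1-\alpha)a)/\Delta$ are all correct. This route is more economical than inverting $\nu I-A_{\alpha}(K_{a,b})$ blockwise (Schur complement plus Lemma \ref{lem5}). It also exhibits the two non-trivial eigenvalues of Lemma \ref{lem4} as the roots of $\Delta$.

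The only defect is in your write-up of the last step, and your diagnosis of it is wrong: the Cramer cofactors are fine. The slip is in expanding $ap+bq$, where you wrote $-\alpha ab$ for both $a\cdot(-\alpha a)$ and $b\cdot(-\alpha b)$. Done correctly,
\[
ap+bq=\frac{a\nu-\alpha a^{2}+(1-\alpha)ab+b\nu-\alpha b^{2}+(1-\alpha)ab}{\Delta}=\frac{(a+b)\nu-\alpha(a+b)^{2}+2ab}{\Delta},
\]
which is the stated formula at once. Your spurious discrepancy was exactly $\alpha(a-b)^2$, which is why the $\alpha=0$ check passed and the $\alpha=1$ check failed. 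Delete the ``obstacle'' and ``should yield'' passages and present this one line instead.

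You can also make the ansatz exact rather than generic. Let $W=\mathrm{span}\{(\mathbf{1}_a^T,\mathbf{0}_b^T)^T,(\mathbf{0}_a^T,\mathbf{1}_b^T)^T\}$; it is $A_{\alpha}(K_{a,b})$-invariant. The coefficient matrix of your $2\times 2$ system is $\nu I_2$ minus the matrix of the restriction to $W$. Hence $\Delta(\nu)\neq 0$ whenever $\nu$ is not an eigenvalue of $A_{\alpha}(K_{a,b})$. Then $x=(p\mathbf{1}_a^T,q\mathbf{1}_b^T)^T$ solves $(\nu I-A_{\alpha}(K_{a,b}))x=\mathbf{1}_{a+b}$ by substitution, and uniqueness identifies it with $(\nu I-A_{\alpha}(K_{a,b}))^{-1}\mathbf{1}_{a+b}$.
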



\section{$A_{\alpha}$-characteristic polynomial of $Q$-vertex join} \label{sec4}

For the graphs $G_1$ and $G_2$, let $V(G_1)=\{v_1,v_2, \ldots, v_{n_1}\}$ and $V(G_2)=\{u_1,u_2, \ldots, u_{n_2}\}$. Let $I(G_1) = \{v^\prime_1, v^\prime_2, \ldots, v^\prime_{m_1} \}$ be the set of vertices which have been inserted to the edges of $G_1$. Throughout the paper, we index the rows and columns of the matrices associated with $G_1 \dot{\vee}_Q G_2$ and $G_1 \underline{\vee}_Q G_2$ by $V(G_1) \cup I(G_1) \cup V(G_2)$ in order.

\begin{theorem}\label{th1}
    Consider two graphs $G_1$ and $G_2$ with respective orders $n_1$ and $n_2$. Let $m_1$ be the size and $t_1$ be the vertex regularity of $G_1$. Then 
    \begin{align*} \label{eq11}
         \resizebox{.15\textwidth}{!}{$\phi_{A_{\alpha}(G_1 \dot{\vee}_Q G_2)}(\nu)$} &\resizebox{.45\textwidth}{!}{$=(\nu +2 -2\alpha -2\alpha t_1)^{m_1 -n_1} \phi_{A_{\alpha}(G_2)}(\nu- \alpha n_1)$}\\ 
         &\quad \prod_{i=2}^{n_1}\resizebox{.5\textwidth}{!}{$\bigg({\nu}^2 -\Big(t_1 -2 +\lambda_i(A_{\alpha}(G_1)) +\alpha (2+ t_1 + n_2)\Big)\nu$}\\
        &\quad \resizebox{.6\textwidth}{!}{$- 2\alpha n_2(1-\alpha)
        -\Big(1- \alpha(1+t_1+n_2)\Big)\Big(t_1 + \lambda_i(A_{\alpha}(G_1))\Big) \bigg)$}\\
        &\quad \resizebox{.5\textwidth}{!}{$\bigg( (\nu-\alpha t_1 -\alpha n_2)(\nu+2-2\alpha -2t_1) -2t_1(1-\alpha)^2$}\\
        &\quad \resizebox{.5\textwidth}{!}{$-n_1(1-\alpha)^2(\nu+2-2\alpha-2t_1) \Upsilon_{A_{\alpha}(G_2)}(\nu-\alpha n_1) \bigg)$}.
    \end{align*}
\end{theorem}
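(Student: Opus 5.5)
The plan is to write $A_\alpha(G_1\dot{\vee}_Q G_2)$ in block form and apply the Schur complement formula (Lemma \ref{lem1}) twice, reducing everything to commuting polynomials in $A(G_1)$. Write $R=R(G_1)$. The degrees in the join are as follows: a vertex of $G_1$ has degree $t_1+n_2$; an inserted vertex on an edge $uv$ is adjacent to $u$, $v$ and to the $2t_1-2$ inserted vertices of edges adjacent to $uv$, so it has degree $2t_1$; a vertex $u_j$ of $G_2$ has degree $d_{G_2}(u_j)+n_1$. Using $A(\mathcal{L}(G_1))=R^TR-2I_{m_1}$ from \eqref{eq3}, the matrix is
\[
A_\alpha(G_1\dot{\vee}_Q G_2)=\begin{pmatrix} \alpha(t_1+n_2)I_{n_1} & (1-\alpha)R & (1-\alpha)J_{n_1\times n_2}\\ (1-\alpha)R^T & (2\alpha t_1-2+2\alpha)I_{m_1}+(1-\alpha)R^TR & 0\\ (1-\alpha)J_{n_2\times n_1} & 0 & A_\alpha(G_2)+\alpha n_1 I_{n_2}\end{pmatrix}.
\]
Throughout, $\nu$ is taken generic, so that every block we invert is invertible; the final identity is between polynomials and therefore extends to all $\nu$.

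First I take the Schur complement with respect to the $G_2$ block. This produces the factor $\det((\nu-\alpha n_1)I-A_\alpha(G_2))=\phi_{A_\alpha(G_2)}(\nu-\alpha n_1)$. The $(1,1)$ block gets corrected by $-(1-\alpha)^2 J_{n_1\times n_2}(\cdots)^{-1}J_{n_2\times n_1}$, which by the definition \eqref{eq5} equals $-(1-\alpha)^2\Upsilon_{A_\alpha(G_2)}(\nu-\alpha n_1)J_{n_1\times n_1}$.

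Next I take the Schur complement of the remaining $(n_1+m_1)$-block with respect to the inserted-vertex block. Put $x=\nu+2-2\alpha-2\alpha t_1$ and $c=1-\alpha$, so that this block is $xI_{m_1}-cR^TR$.
\begin{itemize}
\item Its determinant equals $x^{m_1-n_1}\det(xI_{n_1}-cRR^T)$, since $R^TR$ and $RR^T$ share their nonzero spectrum. This gives the factor $(\nu+2-2\alpha-2\alpha t_1)^{m_1-n_1}$.
\item For the correction term I use the push-through identity $R(xI-cR^TR)^{-1}R^T=(xI-cRR^T)^{-1}RR^T$, together with $RR^T=A(G_1)+t_1I$ from \eqref{eq4}.
\end{itemize}
After these two steps the determinant reduces to $\det(M)$, an $n_1\times n_1$ matrix $M$ built from three pieces:
\begin{itemize}
\item $(\nu-\alpha t_1-\alpha n_2)I$;
\item $-c^2(xI-cRR^T)^{-1}RR^T$;
\item $-c^2\Upsilon J$, where $\Upsilon=\Upsilon_{A_\alpha(G_2)}(\nu-\alpha n_1)$.
\end{itemize}
Since $G_1$ is regular, $A(G_1)$ commutes with $J$. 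So I diagonalize simultaneously using an orthonormal eigenbasis of $A(G_1)$ with first vector $\frac{1}{\sqrt{n_1}}\boldsymbol{1_{n_1}}$; equivalently, I multiply by $\det(xI-cRR^T)$ and apply Lemma \ref{lem2}/Lemma \ref{lem3} to the $J$-term. The eigenvalues of $A(G_1)$ are written as $\mu_i$, with $\lambda_i(A_\alpha(G_1))=\alpha t_1+(1-\alpha)\mu_i$.
\begin{itemize}
\item For $i\ge 2$ the $J$ term vanishes. Each such eigenvector contributes $(\nu-\alpha t_1-\alpha n_2)(x-c(\mu_i+t_1))-c^2(\mu_i+t_1)$. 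This is a quadratic in $\nu$, which I then rewrite in terms of $\lambda_i(A_\alpha(G_1))$ to match the displayed quadratic factor.
\item For $i=1$, where $\mu_1=t_1$, the contribution is $(\nu-\alpha t_1-\alpha n_2)(x-2ct_1)-2c^2t_1-n_1c^2(x-2ct_1)\Upsilon$, giving the last factor.
\end{itemize}

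The main obstacle I expect is purely algebraic bookkeeping. The $\det(xI-cRR^T)$ factors coming from the second Schur step must cancel correctly against the denominators of $(xI-cRR^T)^{-1}$, leaving only $x^{m_1-n_1}$ outside the product. Then the resulting quadratics, initially expressed through $\mu_i$ and $x$, must be re-expressed through $\lambda_i(A_\alpha(G_1))$ and $t_1$ so that they agree exactly with the coefficients in the statement. For the $i=1$ factor in particular, I would check the constants of $x-2ct_1$ against those in the stated formula before finalizing.
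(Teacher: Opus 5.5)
Your proposal is correct. The bookkeeping you flagged goes through: with $x=\nu+2-2\alpha-2\alpha t_1$ and $c=1-\alpha$ you get $x-2ct_1=\nu+2-2\alpha-2t_1$. Substituting $c(\mu_i+t_1)=\lambda_i(A_\alpha(G_1))+t_1-2\alpha t_1$ into $(\nu-\alpha t_1-\alpha n_2)(x-c(\mu_i+t_1))-c^2(\mu_i+t_1)$ reproduces the stated quadratic exactly, constant term included.

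Your route differs from the paper's in the second Schur step. The paper eliminates the original-vertex block $(\nu-\alpha(t_1+n_2))I_{n_1}-(1-\alpha)^2\Upsilon J_{n_1\times n_1}$. This forces it to:
\begin{enumerate}
\item invert a matrix of the form $bI-cJ$ (Lemma \ref{lem5});
\item use $R^TJ_{n_1\times n_1}R=4J_{m_1\times m_1}$ and apply Lemma \ref{lem3} a second time at the $m_1\times m_1$ level;
\item evaluate the remaining determinant through the line-graph spectrum (Lemma \ref{lem7}) and the coronal of a constant-row-sum matrix (Lemma \ref{lem99.9});
\item recombine three factors $x,y,z$ through a lengthy cancellation.
\end{enumerate}
You eliminate the inserted-vertex block instead. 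The identity $\det(xI_{m_1}-cR^TR)=x^{m_1-n_1}\det(xI_{n_1}-cRR^T)$, which underlies Lemma \ref{lem7}, together with the push-through identity leaves an $n_1\times n_1$ matrix in the commuting pair $A(G_1)$, $J_{n_1\times n_1}$. Multiplying by $\det(xI-cRR^T)$ clears every denominator at once, and the factors are read off eigenvector by eigenvector.

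This is the same device the paper itself adopts later for the $T$-vertex join (the matrix $U$ in the proof of Theorem \ref{th101}). It is shorter and avoids Lemmas \ref{lem5} and \ref{lem99.9} entirely. It also makes the structure of the answer transparent: the $\Upsilon$-term can only survive in the factor attached to $\boldsymbol{1_{n_1}}$, while the quadratics come from eigenvectors orthogonal to it. The paper's route gains nothing in generality, since both arguments rely on the regularity of $G_1$. Its only advantage is that it stays within the coronal framework of Lemma \ref{lem3}.
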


\begin{proof}
Let us denote the $(0,1)$-incidence matrix of the graph $G_1$ by $R$. Then
\begin{equation}\label{eq12}
    A(G_1 \dot{\vee}_Q G_2) =
    \begin{bmatrix}
        0_{n_1 \times n_1} & R & J_{n_1 \times n_2}\\
        R^T & A(\mathcal{L}(G_1)) & 0_{m_1 \times n_2}\\
        J_{n_2 \times n_1} & 0_{n_2 \times m_1} & A(G_2)
    \end{bmatrix}.
\end{equation}
and
\begin{equation}\label{eq14}
    D(G_1 \dot{\vee}_Q G_2) =
    \begin{bmatrix}
        (t_1 + n_2)I_{n_1} & 0_{n_1 \times m_1} & 0_{n_1 \times n_2}\\
        0_{m_1 \times n_1} & 2t_1I_{m_1} & 0_{m_1 \times n_2}\\
        0_{n_2 \times n_1} & 0_{n_2 \times m_1} & D(G_2) + n_1I_{n_2}
    \end{bmatrix}.
\end{equation}
Therefore using the expressions \eqref{eq12} and \eqref{eq14}, we get 
\begin{equation*}\label{eq15}
    A_{\alpha}(G_1 \dot{\vee}_Q G_2)=
    \begin{bmatrix}
        \alpha(t_1 + n_2)I_{n_1} & (1-\alpha)R & (1-\alpha)J_{n_1 \times n_2}\\
        (1-\alpha)R^T & 2\alpha t_1I_{m_1} +(1-\alpha) A(\mathcal{L}(G_1)) & 0_{m_1 \times n_2}\\
        (1-\alpha)J_{n_2 \times n_1} & 0_{n_2 \times m_1} & A_{\alpha}(G_2) + \alpha n_1 I_{n_2}
    \end{bmatrix}.
\end{equation*}
and so
\begin{align}\label{eq16}
    &\quad \phi_{A_{\alpha}(G_1  \dot{\vee}_Q G_2)}(\nu)\notag\\ 
    &\quad= \det \big(\nu I_{n_1+m_1+n_2}-A_{\alpha}(G_1 \dot{\vee}_Q G_2) \big)\notag\\
    &\quad= \det \left[\begin{smallmatrix}
        (\nu-\alpha(t_1 + n_2)) I_{n_1} & -(1-\alpha)R & -(1-\alpha)J_{n_1 \times n_2}\\
        -(1-\alpha)R^T & (\nu-2\alpha t_1) I_{m_1} -(1-\alpha) A(\mathcal{L}(G_1)) & 0_{m_1 \times n_2}\\
        -(1-\alpha)J_{n_2 \times n_1} & 0_{n_2 \times m_1} & (\nu-\alpha n_1) I_{n_2} - A_{\alpha}(G_2)
            \end{smallmatrix}\right]\notag\\
    &\quad=  \det \big( (\nu-\alpha n_1) I_{n_2} - A_{\alpha}(G_2) \big) \det S \quad (\text{using Lemma } \ref{lem1}),
\end{align}
where
\begin{align*}
    S &=
    \begin{bmatrix}
        \big(\nu-\alpha(t_1 + n_2)\big) I_{n_1} & -(1-\alpha)R\\
        -(1-\alpha)R^T & (\nu-2\alpha t_1 ) I_{m_1} -(1-\alpha)A(\mathcal{L}(G_1))
    \end{bmatrix}\\
    &\quad -
    \begin{bmatrix}
        -(1-\alpha)J_{n_1 \times n_2}\\
        0_{m_1 \times n_2}
    \end{bmatrix}
    \big( (\nu-\alpha n_1) I_{n_2} - A_{\alpha}(G_2) \big)^{-1}
    \begin{bmatrix}
        -(1-\alpha)J_{n_2 \times n_1} & 0_{n_2 \times m_1}
    \end{bmatrix}.
\end{align*}
\begin{align*}
    \det S &= \det \Bigg(
    \begin{bmatrix}
        \big(\nu-\alpha(t_1 + n_2)\big) I_{n_1} & -(1-\alpha)R\\
        -(1-\alpha)R^T & (\nu-2\alpha t_1 ) I_{m_1} -(1-\alpha)A(\mathcal{L}(G_1))
    \end{bmatrix}\\
    &\quad - (1-\alpha)^2
    \begin{bmatrix}
        \Upsilon_{A_{\alpha}(G_2)}(\nu-\alpha n_1) J_{n_1 \times n_1} & 0_{n_1 \times m_1}\\
        0_{m_1 \times n_1} & 0_{m_1 \times m_1}
    \end{bmatrix}
    \Bigg)\\
    &= \det \left[
    \begin{smallmatrix}
        (\nu-\alpha(t_1 + n_2)) I_{n_1} - (1-\alpha)^2\Upsilon_{A_{\alpha}(G_2)}(\nu-\alpha n_1) J_{n_1 \times n_1} & -(1-\alpha)R\\
        -(1-\alpha)R^T & (\nu-2\alpha t_1 ) I_{m_1} -(1-\alpha)A(\mathcal{L}(G_1))
    \end{smallmatrix}\right].
\end{align*}
Applying Lemma \ref{lem1},
\begin{align*}
    \det S &= \det \Big( \big(\nu-\alpha (t_1 +n_2) \big) I_{n_1} -(1-\alpha)^2\Upsilon_{A_{\alpha}(G_2)}(\nu-\alpha n_1) J_{n_1 \times n_1}\Big) \\
    &\quad \det \bigg(  (\nu-2\alpha t_1) I_{m_1} -(1-\alpha)A(\mathcal{L}(G_1)) -(1-\alpha)R^T\Big(\big(\nu-\alpha (t_1 +n_2)\big)I_{n_1} \\
    &\quad - (1-\alpha)^2\Upsilon_{A_{\alpha}(G_2)}(\nu-\alpha n_1)J_{n_1 \times n_1} \Big)^{-1}(1-\alpha)R\bigg)
\end{align*}
Now using Lemma \ref{lem3}, we obtain
\begin{align*}
    &\det \Big( \big(\nu-\alpha (t_1 +n_2) \big) I_{n_1} -(1-\alpha)^2\Upsilon_{A_{\alpha}(G_2)}(\nu-\alpha n_1) J_{n_1 \times n_1}\Big)\\
    &= \big(\nu-\alpha (t_1 +n_2)\big)^{n_1}\Big(1-(1-\alpha)^2 \Upsilon_{A_{\alpha}(G_2)}(\nu-\alpha n_1)\frac{n_1}{\nu-\alpha (t_1 +n_2)} \Big)  =x \quad \text{(say)}
\end{align*}
and using Lemma \ref{lem5}, we get
\begin{align*}
    &\det \bigg(  (\nu-2\alpha t_1) I_{m_1} -(1-\alpha)A(\mathcal{L}(G_1))\\
    &-(1-\alpha)R^T\Big(\big(\nu-\alpha (t_1 +n_2)\big)I_{n_1} - (1-\alpha)^2\Upsilon_{A_{\alpha}(G_2)}(\nu-\alpha n_1)J_{n_1 \times n_1} \Big)^{-1}(1-\alpha)R\bigg)
\end{align*}
\begin{align*}
    &= \det\bigg( (\nu-2\alpha t_1)I_{m_1} -(1-\alpha)A(\mathcal{L}(G_1)) -\frac{(1-\alpha)^2}{\nu-\alpha(t_1 +n_2)} R^TR\\
    &\quad -\frac{4(1-\alpha)^4\Upsilon_{A_{\alpha}(G_2)}(\nu-\alpha n_1)}{\big(\nu-\alpha(t_1 +n_2)\big)\Big(\big(\nu-\alpha(t_1 +n_2)\big) -n_1(1-\alpha)^2 \Upsilon_{A_{\alpha}(G_2)}(\nu-\alpha n_1)\Big)}J_{m_1 \times m_1}\bigg)\\
    &= \det\Big( (\nu-2\alpha t_1)I_{m_1} -(1-\alpha)A(\mathcal{L}(G_1)) -\frac{(1-\alpha)^2}{\nu-\alpha(t_1 +n_2)} R^TR\Big)\\
    &\quad \bigg(1-\Bigg\{\frac{4(1-\alpha)^4\Upsilon_{A_{\alpha}(G_2)}(\nu-\alpha n_1)}{\big(\nu-\alpha(t_1 +n_2)\big)\Big(\big(\nu-\alpha(t_1 +n_2)\big) -n_1(1-\alpha)^2 \Upsilon_{A_{\alpha}(G_2)}(\nu-\alpha n_1)\Big)}\Bigg\} \\
    &\quad \times \Upsilon_{(1-\alpha)A(\mathcal{L}(G_1)) +\frac{(1-\alpha)^2}{\nu-\alpha(t_1 +n_2)} R^TR}(\nu-2\alpha t_1)\bigg), \quad \text{(by Lemma } \ref{lem3}).\\
    &= yz \quad\text{(say),}
\end{align*}
where
\begin{align*}
    y = \det\Big( (\nu-2\alpha t_1)I_{m_1} -(1-\alpha)A(\mathcal{L}(G_1))-\frac{(1-\alpha)^2}{\nu-\alpha(t_1 +n_2)}R^TR\Big)
\end{align*}
and
\begin{align*}
    z &=1-\frac{4(1-\alpha)^4\Upsilon_{A_{\alpha}(G_2)}(\nu-\alpha n_1)}{\big(\nu-\alpha(t_1 +n_2)\big)\Big(\nu-\alpha(t_1 +n_2) -n_1(1-\alpha)^2 \Upsilon_{A_{\alpha}(G_2)}(\nu-\alpha n_1)\Big)}\\
    &\quad\times \Upsilon_{(1-\alpha)A(\mathcal{L}(G_1)) +\frac{(1-\alpha)^2}{\nu-\alpha(t_1 +n_2)} R^TR}(\nu-2\alpha t_1)
\end{align*}
So we have, $\det S = xyz$.\\
Using $\eqref{eq3}$ in the expression of $y$, we get
\begin{align*}
    y &= \det\bigg( (\nu-2\alpha t_1)I_{m_1} -(1-\alpha)A(\mathcal{L}(G_1))-\frac{(1-\alpha)^2}{\nu-\alpha(t_1 +n_2)} \big(A(\mathcal{L}(G_1)) +2I_{m_1}\big)\bigg), \notag\\
    &=\big(\nu-2\alpha t_1+2(1-\alpha)\big)^{m_1 -n_1}\notag\\ 
    &\quad \prod_{i=1}^{n_1}\bigg(\nu -2\alpha t_1 -\frac{2(1-\alpha)^2}{\nu-\alpha(t_1 +n_2)} -\Big( 1-\alpha + \frac{(1-\alpha)^2}{\nu-\alpha(t_1 + n_2)}\Big)\big(\lambda_i(A(G_1))+t_1-2\big)\bigg),\notag\\
    &= \resizebox{.95\textwidth}{!}{$(\nu+2-2\alpha -2\alpha t_1)^{m_1 -n_1} \prod_{i=1}^{n_1}\bigg(\nu -2\alpha t_1 -\frac{2(1-\alpha)^2}{\nu-\alpha(t_1 +n_2)}-\Big( 1 + \frac{1-\alpha}{\nu-\alpha(t_1 + n_2)}\Big)\Big(\lambda_i(A_{\alpha}(G_1)) -2\alpha t_1 -2+t_1+2\alpha\Big)\bigg).$}
\end{align*}
$\mathcal{L}(G_1)$ is a $(2t_1 -2)$ regular graph as $G_1$ is a $t_1$ regular graph. Using \eqref{eq3}, we get that each row sum of the matrix $(1-\alpha)A(\mathcal{L}(G_1)) +\frac{(1-\alpha)^2}{\nu-\alpha(t_1 +n_2)} R^TR$ is $(1-\alpha)(2t_1-2) +\frac{(1-\alpha)^2}{\nu-\alpha(t_1 +n_2)}2t_1$. Therefore from Lemma \ref{lem99.9}, we get
\begin{equation*}
     \Upsilon_{(1-\alpha)A(\mathcal{L}(G_1)) +\frac{(1-\alpha)^2}{\nu-\alpha(t_1 +n_2)} R^TR}(\nu-2\alpha t_1)\\
     = \frac{m_1}{\nu-2\alpha t_1 - (1-\alpha)(2t_1-2) -\frac{(1-\alpha)^2}{\nu-\alpha(t_1 +n_2)}2t_1 }
\end{equation*}
Using this, we get the expression for $z$ as
\begin{align*}
    z &=1-\frac{4(1-\alpha)^4\Upsilon_{A_{\alpha}(G_2)}(\nu-\alpha n_1)}{\nu-\alpha(t_1 +n_2) -n_1(1-\alpha)^2 \Upsilon_{A_{\alpha}(G_2)}(\nu-\alpha n_1)}\\
    &\quad \times \frac{m_1}{(\nu-2\alpha t_1)\big(\nu-\alpha(t_1 +n_2)\big) -(1-\alpha)(2t_1 -2)\big(\nu-\alpha(t_1 +n_2)\big) -2t_1(1-\alpha)^2}
\end{align*}
After simplification, it becomes
\begin{align*}
    \resizebox{.027\textwidth}{!}{  $z$ } &\resizebox{.7\textwidth}{!}{$ =\frac{(\nu-\alpha t_1 -\alpha n_2)}{(\nu-2\alpha t_1)(\nu-\alpha(t_1 +n_2)) -(1-\alpha)(2t_1 -2)(\nu-\alpha(t_1 +n_2)) -2t_1(1-\alpha)^2}$}\\
    &\quad\resizebox{.91\textwidth}{!}{$
    \times \frac{(\nu-\alpha t_1 -\alpha n_2)(\nu-2\alpha t_1 -2(1-\alpha)(t_1 -1))
    -n_1(1-\alpha)^2(\nu-2\alpha t_1 -2(1-\alpha)(t_1 -1)) \Upsilon_{A_{\alpha}(G_2)}(\nu-\alpha n_1) -2t_1 (1-\alpha)^2
    }{\nu-\alpha(t_1 +n_2) -n_1(1-\alpha)^2 \Upsilon_{A_{\alpha}(G_2)}(\nu-\alpha n_1)}$}
\end{align*}
Now using the expressions of $x,y$ and $z$ in $\det S = xyz$, and simplifying again, we get
{\footnotesize
\begin{align}\label{eq100}
    \det S &=(\nu+2-2\alpha -2\alpha t_1)^{m_1 -n_1} \prod_{i=2}^{n_1}\bigg({\nu}^2 -\Big(t_1 -2 +\lambda_i(A_{\alpha}(G_1))+ \alpha(2+t_1 +n_2)\Big)\nu \notag\\
    &\quad -\big(1-\alpha (1+t_1 +n_2)\big)\big(t_1 + \lambda_i(A_{\alpha}(G_1))\big) -2\alpha n_2 (1-\alpha) \bigg)\notag\\
    &\quad \bigg( (\nu-\alpha t_1 -\alpha n_2)(\nu+2-2\alpha -2t_1) \notag\\
    &\quad-n_1(1-\alpha)^2(\nu+2-2\alpha -2t_1)\Upsilon_{A_{\alpha}(G_2)}(\nu-\alpha n_1) -2t_1 (1-\alpha)^2\bigg)
\end{align}
}
\eqref{eq100} together with \eqref{eq16} gives the required result.
\end{proof}

Assuming that $G_2$ is a $t_2$-regular graph, the next corollary computes the $A_{\alpha}$-spectrum of $G_1 \dot{\vee}_Q G_2$. 

\begin{corollary}\label{cor1.1}
    Consider two graphs $G_1$ and $G_2$ with respective orders $n_1$ and $n_2$, and regularities $t_1$ and $t_2$. Further, the size of $G_1$ is $m_1$. Then
    \begin{enumerate}[label={\upshape (\alph*)}]
        \item When $t_1 =1$, the $A_{\alpha}$-spectrum of $G_1 \dot{\vee}_Q G_2$ comprises:
        \begin{enumerate}[label= {\upshape (\roman*)}]
            \item one simple eigenvalue $\alpha (1+n_2)$;
            \item the eigenvalues $2\alpha + \lambda_j( A_{\alpha}(G_2))$ for $j=2,3, \ldots, n_2$ and
            \item three eigenvalues obtained by solving
            \begin{align*}
                \resizebox{0.87\textwidth}{!}{$(\nu-2\alpha -t_2)(\nu-\alpha -\alpha n_2)(\nu-2\alpha)-2n_2(1-\alpha)^2(\nu-2\alpha) - 2(1-\alpha)^2(\nu-2\alpha -t_2)=0$}
            \end{align*}
        \end{enumerate}

        \item When $t_1 \geq 2$, the $A_{\alpha}$-spectrum of $G_1 \dot{\vee}_Q G_2$ comprises:
        \begin{enumerate}[label= {\upshape (\roman*)}]
            \item one eigenvalue $2\alpha(1+t_1) -2$ with multiplicity $m_1 -n_1$;
            \item the eigenvalues $\alpha n_1 + \lambda_j(A_{\alpha}(G_2))$ for $j=2,3, \ldots, n_2$;
            \item eigenvalues obtained by solving the equations,
            \begin{align*}
                \nu^2 &-\big(t_1 -2 +\lambda_i(A_{\alpha}(G_1))+\alpha(2+t_1 +n_2)\big)\nu-2\alpha (1-\alpha)n_2\\
                & -\big(1-\alpha(1+t_1 +n_2)\big) \big(t_1 +\lambda_i(A_{\alpha}(G_1))\big) = 0, \enskip i= 2,3,\ldots,n_1 \enskip \text{and}
            \end{align*}
            \item three eigenvalues obtained by solving the equation,
            \begin{align*}
                &(\nu-\alpha n_1 -t_2)(\nu-\alpha t_1 -\alpha n_2)(\nu+2-2\alpha -2t_1) \\
                &-n_1 n_2 (1-\alpha)^2(\nu+2-2\alpha -2t_1) -2(1-\alpha)^2 t_1(\nu-\alpha n_1 -t_2)=0.
            \end{align*}
        \end{enumerate}
    \end{enumerate}
\end{corollary}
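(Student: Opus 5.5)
The plan is to specialize Theorem~\ref{th1} to the case where $G_2$ is $t_2$-regular and then read the roots off the resulting factorization. All identities are used as identities of rational functions in $\nu$; at the end both sides are polynomials, so the identity holds for every $\nu$.

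\textbf{Specializing the $G_2$ factors.} Since $G_2$ is $t_2$-regular, every row sum of $A_{\alpha}(G_2)=\alpha D(G_2)+(1-\alpha)A(G_2)$ equals $\alpha t_2+(1-\alpha)t_2=t_2$. Lemma~\ref{lem99.9} then gives
$\Upsilon_{A_{\alpha}(G_2)}(\nu-\alpha n_1)=\frac{n_2}{\nu-\alpha n_1-t_2}$.
For the same reason $\lambda_1(A_{\alpha}(G_2))=t_2$, so
\[
\phi_{A_{\alpha}(G_2)}(\nu-\alpha n_1)=(\nu-\alpha n_1-t_2)\prod_{j=2}^{n_2}\big(\nu-\alpha n_1-\lambda_j(A_{\alpha}(G_2))\big).
\]
I substitute both expressions into Theorem~\ref{th1}. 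The factor $(\nu-\alpha n_1-t_2)$ is then absorbed into the last bracket of the theorem. This clears the denominator of $\Upsilon$ and turns that bracket into the cubic
\[
(\nu-\alpha n_1-t_2)(\nu-\alpha t_1-\alpha n_2)(\nu+2-2\alpha-2t_1)-n_1n_2(1-\alpha)^2(\nu+2-2\alpha-2t_1)-2(1-\alpha)^2t_1(\nu-\alpha n_1-t_2).
\]
The characteristic polynomial therefore factors as follows:
\begin{itemize}
\item $(\nu+2-2\alpha-2\alpha t_1)^{m_1-n_1}$;
\item the linear factors $\nu-\alpha n_1-\lambda_j(A_{\alpha}(G_2))$ for $j\ge 2$;
\item the $n_1-1$ quadratics indexed by $i=2,\dots,n_1$;
\item the cubic above.
\end{itemize}

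\textbf{Case $t_1\ge 2$.} Here $m_1=n_1t_1/2\ge n_1$, so the first factor is a genuine polynomial. It contributes the eigenvalue $2\alpha(1+t_1)-2$ with multiplicity $m_1-n_1$, which may be zero, for instance when $G_1$ is a cycle. Items (ii)--(iv) of part (b) are then read off directly from the remaining factors. As a check, the degree count is $(m_1-n_1)+(n_2-1)+2(n_1-1)+3=n_1+m_1+n_2$, which is the order of $G_1\dot{\vee}_Q G_2$.

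\textbf{Case $t_1=1$.} I would take $G_1$ connected, so $G_1=K_2$ with $n_1=2$ and $m_1=1$; this is implicit in the statement. Now $m_1-n_1=-1$, so Theorem~\ref{th1} contains the negative power $(\nu+2-4\alpha)^{-1}$, and this must cancel. The remaining eigenvalue of $A_{\alpha}(K_2)$ is $\lambda_2=2\alpha-1$. Substituting it into the single quadratic ($i=2$) and expanding, I expect the factorization
\[
(\nu+2-4\alpha)\big(\nu-\alpha(1+n_2)\big).
\]
I have checked this by comparing coefficients: the $\nu$-coefficient is $-(5\alpha+\alpha n_2-2)$ on both sides, and the constant term is $4\alpha^2+4\alpha^2n_2-2\alpha-2\alpha n_2$ on both sides. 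The factor $\nu+2-4\alpha$ cancels the negative power and leaves the simple eigenvalue $\alpha(1+n_2)$. Setting $n_1=2$ and $t_1=1$ in the other factors gives the eigenvalues $2\alpha+\lambda_j(A_{\alpha}(G_2))$ for $j\ge 2$, and turns the cubic into the one in (a)(iii). The degree count is $1+(n_2-1)+3=n_2+3$, as it should be.

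\textbf{Main obstacle.} The only delicate step is this $t_1=1$ cancellation, together with keeping track of which factors are genuine polynomials. The rest is direct substitution into Theorem~\ref{th1}.
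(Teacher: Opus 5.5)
Your proposal is correct and follows the same route as the paper. You substitute $\Upsilon_{A_{\alpha}(G_2)}(\nu-\alpha n_1)=\frac{n_2}{\nu-\alpha n_1-t_2}$ and the eigenvalue $t_2$ of $A_{\alpha}(G_2)$ into Theorem~\ref{th1}, absorb $(\nu-\alpha n_1-t_2)$ into the last bracket, and split into $t_1=1$ (so $G_1=K_2$) and $t_1\ge 2$ (so $m_1\ge n_1$).

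Your explicit check that the $i=2$ quadratic factors as $(\nu+2-4\alpha)(\nu-\alpha(1+n_2))$, cancelling the negative power, fills in the step the paper dismisses with ``the result follows.'' Likewise, $m_1=n_1t_1/2\ge n_1$ is a cleaner justification than the paper's ``not a tree'' remark.
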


\begin{proof}
    The matrix $A_{\alpha}(G_2)$ has constant row sum $t_2$, because $G_2$ is a $t_2$-regular graph of order $n_2$. This implies $\Upsilon_{A_{\alpha}(G_2)}(\nu-\alpha n_1) = \frac{n_2}{\nu - \alpha n_1 -t_2}$ due to Lemma \ref{lem99.9}. Further, since the matrix $A(G_2)$ possesses the eigenvalue $t_2$, 
    $A_{\alpha}(G_2) = \alpha t_2 I_{n_2} + (1-\alpha)A(G_2)$ too has the same eigenvalue. Applying Theorem \ref{th1}, these give us
    
    {\footnotesize
        \begin{align} \label{eq28}
            \phi_{A_{\alpha}(G_1 \dot{\vee}_Q G_2)}(\nu) &= (\nu+2-2\alpha -2\alpha t_1)^{m_1-n_1} \prod_{j=2}^{n_2}\Big(\nu-\alpha n_1 -\lambda_j(A_{\alpha}(G_2))\Big)\notag\\
            &\quad \prod_{i=2}^{n_1}\bigg(  {\nu}^2 -\Big(t_1 -2 +\lambda_i(A_{\alpha}(G_1)) +\alpha (2+ t_1 + n_2)\Big)\nu\notag\\
            &\quad- 2\alpha n_2(1-\alpha)
            -\Big((1- \alpha(1+t_1+n_2)\Big)\Big(t_1 + \lambda_i(A_{\alpha}(G_1))\Big) \bigg)\notag\\
            &\quad \bigg( (\nu-\alpha n_1 -t_2)(\nu-\alpha t_1 -\alpha n_2)(\nu+2-2\alpha -2t_1)\notag\\
            &\quad-2t_1(1-\alpha)^2(\nu-\alpha n_1 -t_2)
            -n_1 n_2(1-\alpha)^2(\nu+2-2\alpha-2t_1) \bigg)
        \end{align}
    }
    \begin{enumerate}[label={\upshape (\alph*)}]
        \item When $t_1 =1$, we observe that $G_1 =  P_2$, $n_1 =2$ and $m_1 =1$. The $A_{\alpha}$-spectra of $P_2$ is $\{1,2\alpha -1\}$. Using the information, the result follows from \eqref{eq28}.

        \item When $t_1 \geq 2$, $G_1$ does not possess any pendant vertex, so $G_1$ can not be a tree. This implies $m_1 \geq n_1$. From \eqref{eq28}, we get the required result.\qedhere
    \end{enumerate}
\end{proof}

Before moving forward, it is worth validating the practical usefulness of Corollary \ref{cor1.1}. The following example demonstrates how the corollary enables a direct computation of the $A_\alpha$-spectrum of a $Q$-vertex join graph from the spectral data of its factor graphs, for a particular choice of $\alpha$, and confirms the result via an independent SageMath calculation.

\begin{example}
    Let $H$ be the graph depicted in Figure \ref{f1}. We aim to determine the complete $A_{\frac{1}{2}}$-spectrum of $H$. Due to the relatively complex structure of $H$, constructing the matrix $A_{\alpha}(H)$ (with $\alpha = \frac{1}{2}$), and computing its eigenvalues directly is quite tedious, even with the help of SageMath. Nevertheless, a numerical computation in SageMath yields $\text{spec}_{\text{SageMath}}(A_{\frac{1}{2}}(H))$ = $\{ 5.632, 3.790, [3.5]^3, [2]^6, 1.077\}$. Now the key observation is that the graph $H$ can be expressed in a much simpler form as the $Q$-vertex join of two very well-known graphs. Indeed, setting $G_1=K_4$ and $G_2=P_2$, we have $H= G_1 \dot{\vee}_Q G_2$, with $t_1=3, n_1=4, m_1=6$ and $t_2=1, n_2=2$. Moreover, it is easy to check that $\text{spec}({A_{\frac{1}{2}}(G_1)}) = \{ 3, [1]^3 \}$ and $\text{spec}({A_{\frac{1}{2}}(G_2)}) = \{ 1, 0 \}$. Substituting these data into the formula provided in Corollary \ref{cor1.1} (b), we find that the theoretical spectrum $\text{spec}_{\text{Theory}}(A_{\frac{1}{2}}(H)) = \text{spec}_{\text{Theory}}(A_{\frac{1}{2}}(G_1 \dot{\vee}_Q G_2))$ consists of the following eigenvalues: $2$ with multiplicity two, $2$ with multiplicity one, $3.5$ and $2$, each with multiplicity three, and $5.632, 3.790$ and $1.077$, each with multiplicity one. 
    \par Thus, both approaches yield exactly the same $A_{\frac{1}{2}}$-spectrum, confirming the correctness and practical usefulness of Corollary \ref{cor1.1} for this graph.
    \begin{figure}[h!]
    \centering
    \includegraphics[scale=.65]{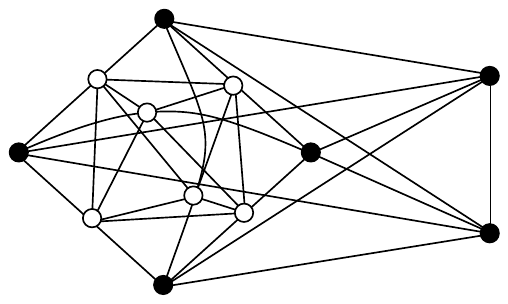}
    \caption{The graph $H$}
    \label{f1}
    \end{figure}
\end{example}

Assuming that $G_2$ is $K_{a,b}$ , the next corollary computes the $A_{\alpha}$-spectrum of $G_1 \dot{\vee}_Q G_2$. 

\begin{corollary}\label{cor1.2}
    Consider a graph $G_1$ of order $n_1$, size $m_1$ and regularity $t_1$. Let $G_2=K_{a,b}$ be the complete bipartite graph of order $n_2 = a+b$. Then
   \begin{enumerate}[label={\upshape (\alph*)}]
        \item When $t_1 =1$, the $A_{\alpha}$-spectrum of $G_1 \dot{\vee}_Q G_2$ comprises:
        \begin{enumerate}[label= {\upshape (\roman*)}]
            \item one simple eigenvalue $\alpha (1+n_2)$;
            \item one eigenvalue $\alpha (a+2)$ with multiplicity $b-1$;
            \item one eigenvalue $\alpha (b+2)$ with multiplicity $a-1$ and
            \item four eigenvalues obtained by solving the equation,
            \begin{align*}
                &\Big( {\nu}^2 -\alpha(3+n_2)\nu + 2\alpha^2 n_2 +4\alpha -2 \Big) \Big( {\nu}^2 -\alpha (4 + n_2)\nu + 4{\alpha}^2 + 2{\alpha}^2n_2 \\
                & +2\alpha ab -ab \Big)-2(1-\alpha)^2(\nu-2\alpha) \Big( (\nu-2\alpha)n_2 -\alpha{n_2}^2 + 2ab \Big)=0.
            \end{align*}
        \end{enumerate}

        \item When $t_1 \geq 2$, the $A_{\alpha}$-spectrum of $G_1 \dot{\vee}_Q G_2$ comprises:
        \begin{enumerate}[label= {\upshape (\roman*)}]
            \item one eigenvalue $2\alpha(1+t_1)-2$ with multiplicity $m_1 -n_1$;
            \item one eigenvalue $\alpha (n_1 +a)$ with multiplicity $b-1$;
            \item one eigenvalue $\alpha (n_1 +b)$ with multiplicity $a-1$;
            \item eigenvalues obtained by solving the equations,
            \begin{align*}
                &{\nu}^2 - \big(t_1 -2+ \lambda_i(A_{\alpha}(G_1)) + \alpha(2+t_1 +n_2)\big)\nu- 2\alpha(1-\alpha)n_2 \\
                & -\big(1-\alpha(1+t_1 +n_2) \big)\big(t_1 +\lambda_i(A_{\alpha}(G_1)) \big)=0 \enskip \text{for } i=2,3,\ldots,n_1 \enskip \text{and} 
            \end{align*}
            \item four eigenvalues obtained by solving the equation,
            \begin{align*}
                &\Big( {\nu}^2 -(\alpha t_1 +\alpha n_2 +2\alpha +2t_1 -2)\nu \\
                &+(2\alpha^2 n_2 + 2\alpha t_1^2 +2\alpha t_1 n_2 +2\alpha t_1 -2\alpha n_2 -2t_1)   \Big) \\
                &\Big( {\nu}^2 -\alpha( 2n_1 + n_2)\nu +( {\alpha}^2n_1^2 + {\alpha}^2n_1n_2 +2\alpha ab -ab)  \Big)\\
                &-n_1(1-\alpha)^2(\nu+2-2\alpha -2t_1)\big((\nu-\alpha n_1)n_2 -\alpha n_2^2 +2ab \big)=0.
            \end{align*}
        \end{enumerate}
    \end{enumerate}
\end{corollary}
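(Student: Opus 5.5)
The plan is to specialise Theorem \ref{th1} to $G_2=K_{a,b}$, mirroring the proof of Corollary \ref{cor1.1}, with two changes: the coronal of $A_\alpha(K_{a,b})$ comes from Lemma \ref{lem6}, and the spectrum of $A_\alpha(K_{a,b})$ from Lemma \ref{lem4}.

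\textbf{Step 1: the $G_2$-part of the polynomial.} By Lemma \ref{lem4},
\[
\phi_{A_\alpha(K_{a,b})}(\mu)=(\mu-\alpha a)^{b-1}(\mu-\alpha b)^{a-1}\bigl(\mu^2-\alpha n_2\mu+(2\alpha-1)ab\bigr),
\]
since the product of the two extreme eigenvalues equals $\tfrac14\bigl(\alpha^2n_2^2-\alpha^2n_2^2-4ab(1-2\alpha)\bigr)=(2\alpha-1)ab$. Substituting $\mu=\nu-\alpha n_1$ produces the factors $(\nu-\alpha(n_1+a))^{b-1}$ and $(\nu-\alpha(n_1+b))^{a-1}$, which give items (b)(ii) and (b)(iii). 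The remaining quadratic is $\nu^2-\alpha(2n_1+n_2)\nu+\alpha^2n_1^2+\alpha^2n_1n_2+(2\alpha-1)ab$.

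\textbf{Step 2: the last factor of Theorem \ref{th1}.} By Lemma \ref{lem6}, $\Upsilon_{A_\alpha(K_{a,b})}(\nu-\alpha n_1)$ has numerator $(\nu-\alpha n_1)n_2-\alpha n_2^2+2ab$ and denominator exactly the quadratic from Step 1. Multiply the last factor of Theorem \ref{th1} by this quadratic. The pole cancels against the matching factor of $\phi_{A_\alpha(G_2)}$, and we are left with a quartic:
\[
\bigl[(\nu-\alpha t_1-\alpha n_2)(\nu+2-2\alpha-2t_1)-2t_1(1-\alpha)^2\bigr]\cdot(\text{quadratic})-n_1(1-\alpha)^2(\nu+2-2\alpha-2t_1)(\text{numerator}).
\]
Expanding the first bracket gives $\nu^2-(\alpha t_1+\alpha n_2+2\alpha+2t_1-2)\nu+(\dots)$. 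Checking that this constant term equals the stated $2\alpha^2n_2+2\alpha t_1^2+2\alpha t_1n_2+2\alpha t_1-2\alpha n_2-2t_1$ is the main bookkeeping step. I would verify it by expanding $(\alpha t_1+\alpha n_2)(2t_1+2\alpha-2)-2t_1(1-2\alpha+\alpha^2)$ term by term.

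\textbf{Step 3: assembling case (b).} The factor $(\nu+2-2\alpha-2\alpha t_1)^{m_1-n_1}$ and the quadratics for $i=2,\dots,n_1$ carry over unchanged from Theorem \ref{th1}. Since $t_1\ge2$ forces $m_1\ge n_1$, exactly as in Corollary \ref{cor1.1}, this gives items (b)(i) and (b)(iv).

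\textbf{Step 4: case (a), $t_1=1$.} Here $G_1=P_2$ with $n_1=2$, $m_1=1$, and $\mathrm{spec}(A_\alpha(P_2))=\{1,2\alpha-1\}$.
\begin{itemize}
\item The factor $(\nu+2-2\alpha-2\alpha t_1)^{m_1-n_1}=(\nu-4\alpha+2)^{-1}$ is a negative power. It must cancel against the $i=2$ quadratic with $\lambda_2=2\alpha-1$, which becomes $\nu^2-\alpha(3+n_2)\nu+(\text{const})$.
\item This interaction is the main obstacle. Rather than assume the cancellation, I would compute directly: factor the $i=2$ quadratic, with $(\nu-\alpha(1+n_2))$ expected to be one factor, giving item (a)(i).
\item The leftover linear factor should combine with the $(\nu-4\alpha+2)^{-1}$ and the quartic of Step 2 (taken at $t_1=1$, $n_1=2$).
\end{itemize}
If this direct route gets messy, I would instead argue via eigenvectors, as is implicit in Corollary \ref{cor1.1}(a). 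Vectors supported on $V(G_2)$, orthogonal to $\mathbf 1$ within each part of $K_{a,b}$, give $\alpha(a+2)^{[b-1]}$ and $\alpha(b+2)^{[a-1]}$. The vector $(1,-1)$ on $V(G_1)$, zero elsewhere, gives $\alpha(1+n_2)$. The remaining four eigenvalues come from the quotient matrix on the equitable partition $\{V(G_1),I(G_1),U_a,U_b\}$. Its characteristic polynomial should expand to the stated equation in (a)(iv), and that is what I would check.
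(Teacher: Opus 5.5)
\textbf{Case (b).} Your Steps 1--3 are exactly the paper's argument: the spectrum comes from Lemma~\ref{lem4}, the denominator of Lemma~\ref{lem6} (at $\nu-\alpha n_1$) is cleared against the matching quadratic factor of $\phi_{A_\alpha(G_2)}(\nu-\alpha n_1)$ in Theorem~\ref{th1}, and $m_1\ge n_1$ finishes it. Your constant-term check is also correct.

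\textbf{Case (a): the slip.} You are right that $(\nu+2-4\alpha)^{-1}$ must be absorbed; the paper passes over this with ``the result follows from \eqref{eq35}''. However, you misidentify the $i=2$ factor. With $t_1=1$ and $\lambda_2(A_\alpha(P_2))=2\alpha-1$, its $\nu$-coefficient is $-\bigl(t_1-2+\lambda_2+\alpha(2+t_1+n_2)\bigr)=-(5\alpha-2+\alpha n_2)$, not $-\alpha(3+n_2)$. In fact
\[
\nu^2-(5\alpha-2+\alpha n_2)\nu+\alpha(1+n_2)(4\alpha-2)=\bigl(\nu-\alpha(1+n_2)\bigr)(\nu+2-4\alpha).
\]
So the leftover linear factor cancels $(\nu+2-4\alpha)^{-1}$ exactly, and the quartic is untouched; nothing ``combines'' with it.

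The quadratic $\nu^2-\alpha(3+n_2)\nu+2\alpha^2n_2+4\alpha-2$ is instead the first factor of (a)(iv), i.e.\ your Step~2 bracket at $t_1=1$. Taken literally, a monic quadratic with that linear coefficient and root $\alpha(1+n_2)$ would have cofactor $\nu-2\alpha$, which cannot cancel the negative power. The exact cancellation reflects bipartiteness: $x=(1,-1)$ on $V(P_2)$ satisfies $R^Tx=0$.

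\textbf{Case (a): your fallback.} This route is correct and genuinely different from the paper's. With $G_1=P_2$, the partition $\{V(G_1),I(G_1),U_a,U_b\}$ is equitable, with degrees constant on cells. The vectors summing to zero on each cell form an invariant subspace of dimension $n_2-1$, spanned by exactly the eigenvectors you list. The characteristic polynomial of the $4\times4$ quotient is precisely the quartic in (a)(iv).

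This proves (a) directly, with no rational-function bookkeeping. The paper's route instead obtains (a) and (b) uniformly from one formula, at the cost of that unstated cancellation. Like the paper, you tacitly take $G_1$ connected when $t_1=1$; a $1$-regular graph could also be $kK_2$.
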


\begin{proof}
    From Lemmas \ref{lem4} and \ref{lem6}, we get\\
    $\text{spec}(A_{\alpha}(G_2)) = \bigg\{ \frac{\alpha (a+b)+ \sqrt{{\alpha}^2 (a+b)^2 + 4ab(1-2\alpha)}}{2}$, $[\alpha a]^{b-1}$,$ [\alpha b]^{a-1}$, $\frac{\alpha (a+b)- \sqrt{{\alpha}^2 (a+b)^2 + 4ab(1-2\alpha)}}{2} \bigg\}$
    \\
    and
    \begin{equation*} \label{eq32}
        \Upsilon_{A_{\alpha}(G_2)}(\nu) = \frac{(a+b)\nu -\alpha (a+b)^2 + 2ab}{{\nu}^2 -\alpha (a+b)\nu + (2\alpha -1)ab}.
    \end{equation*}
    We use these in Theorem \ref{th1} to get
    \begingroup
    \allowdisplaybreaks
   {\footnotesize
   \begin{align*} 
        &\phi_{A_{\alpha}(G_1 \dot{\vee}_Q G_2)}(\nu)\\
        &= \big(\nu+2-2\alpha -2\alpha t_1\big)^{m_1 -n_1} \Bigg(\nu-\alpha n_1 - \frac{\alpha(a+b) + \sqrt{\alpha^2 {(a+b)}^2 + 4ab(1-2\alpha)}}{2}\Bigg)\notag\\
        &\quad\big(\nu-\alpha n_1 -\alpha a\big)^{b-1}\big(\nu-\alpha n_1 -\alpha b\big)^{a-1} \Bigg(\nu-\alpha n_1 - \frac{\alpha(a+b) - \sqrt{\alpha^2 {(a+b)}^2 + 4ab(1-2\alpha)}}{2}\Bigg) \notag\\
        &\quad \bigg\{\prod_{i=2}^{n_1}\bigg(  {\nu}^2 -\Big(t_1 -2 +\lambda_i(A_{\alpha}(G_1)) +\alpha (2+ t_1 + n_2)\Big)\nu- 2\alpha n_2(1-\alpha)\notag\\
        &\quad
        -\Big((1- \alpha(1+t_1+n_2)\Big)\Big(t_1 + \lambda_i(A_{\alpha}(G_1))\Big) \bigg)\bigg\} \bigg( (\nu-\alpha t_1 -\alpha n_2)(\nu+2-2\alpha -2t_1) \notag\\
        &\quad  -2t_1(1-\alpha)^2 -n_1\big(1-\alpha\big)^2\big(\nu+2-2\alpha -2t_1\big) \frac{(a+b)(\nu-\alpha n_1) -\alpha (a+b)^2 + 2ab}{(\nu-\alpha n_1)^2 -\alpha (a+b)(\nu-\alpha n_1) + (2\alpha -1)ab} \bigg).
    \end{align*}}
    \endgroup

    We observe that $\alpha n_1 + \frac{\alpha(a+b) + \sqrt{\alpha^2 {(a+b)}^2 + 4ab(1-2\alpha)}}{2}$ and $\alpha n_1 + \frac{\alpha(a+b) - \sqrt{\alpha^2 {(a+b)}^2 + 4ab(1-2\alpha)}}{2}$ are the zeros of the polynomial, $(\nu-\alpha n_1)^2 -\alpha (a+b)(\nu-\alpha n_1) + (2\alpha -1)ab$.
    
    Therefore, we have the characteristic polynomial as 
    \begin{align}\label{eq35}
        \phi_{A_{\alpha}(G_1 \dot{\vee}_Q G_2)}(\nu)&= \big(\nu+2-2\alpha -2\alpha t_1\big)^{m_1 -n_1} \big(\nu-\alpha n_1 -\alpha a\big)^{b-1}  \big(\nu-\alpha n_1 -\alpha b\big)^{a-1}\notag\\
        &\quad \prod_{i=2}^{n_1}\bigg({\nu}^2 -\Big(t_1 -2 +\lambda_i(A_{\alpha}(G_1)) +\alpha (2+ t_1 + n_2)\Big)\nu\notag\\
        &\quad- 2\alpha n_2(1-\alpha)
        -\Big((1- \alpha(1+t_1+n_2)\Big)\Big(t_1 + \lambda_i(A_{\alpha}(G_1))\Big) \bigg)\notag\\
        &\quad \bigg(\Big({\nu}^2 -(\alpha t_1 +\alpha n_2 +2\alpha +2t_1 -2)\nu\notag\\
        &\quad+(2\alpha^2 n_2 + 2\alpha t_1^2 +2\alpha t_1 n_2 +2\alpha t_1 -2\alpha n_2 -2t_1)   \Big) \notag\\
        &\quad \Big({\nu}^2 -\alpha( 2n_1 + n_2)\nu +( {\alpha}^2n_1^2 + {\alpha}^2n_1n_2 +2\alpha ab -ab)  \Big)\notag\\
        &\quad-n_1(1-\alpha)^2(\nu +2-2\alpha -2t_1)\Big((\nu-\alpha n_1)n_2 -\alpha n_2^2 +2ab \Big)\bigg).
    \end{align}
    \begin{enumerate}[label={\upshape (\alph*)}]
        \item When $t_1 =1$, we observe that $G_1 =  P_2$, $n_1 =2$ and $m_1 =1$. The $A_{\alpha}$-spectra of $P_2$ is $\{1,2\alpha -1\}$. Using the information, the result follows from \eqref{eq35}.

        \item When $t_1 \geq 2$, $G_1$ can not be a tree, which implies $m_1 \geq n_1$. Consequently, the desired result is obtained from \eqref{eq35}.\qedhere
    \end{enumerate}
\end{proof}

The following example serves as a quick verification of Corollary \ref{cor1.2}.

\begin{example}
    Consider the graph $\tilde{H}$ in Figure \ref{f3}. We compute the complete $A_{\frac{1}{2}}$-spectrum of the graph in SageMath, which is $\text{spec}_{\text{SageMath}}(A_{\frac{1}{2}}(\tilde{H}))$ = $\{ 6.336, 4.681, [4]^3$, \linebreak $ [3]^2, [2.5]^3, [2]^3, 1.484\}$. It can be observed that $\tilde{H}= G_1 \dot{\vee}_Q G_2$, where $G_1=K_4$ and $G_2=K_{2,2}$. Thus we have $t_1=3, n_1=4, m_1=6$ and $n_2=a+b=2+2=4$. Moreover $\text{spec}({A_{\frac{1}{2}}(G_1)}) = \{ 3, [1]^3 \}$ and $\text{spec}({A_{\frac{1}{2}}(G_2)}) = \{ 2, [1]^2, 0 \}$. Using these we find that the $A_{\frac{1}{2}}$-spectrum of $H$ provided by Corollary \ref{cor1.2} (b), $\text{spec}_{\text{Theory}}(A_{\frac{1}{2}}(\tilde{H}))$ consists of the following eigenvalues: $2$ with multiplicity two, $3$ with multiplicity one, $3$ with multiplicity one, $4$ and $2.5$, each with multiplicity three, and $6.336, 4.681, 2$ and $1.484$, each with multiplicity one. Thus both $A_{\frac{1}{2}}$-spectrum of $\tilde{H}$ comes out to be exactly the same, validating the correctness of Corollary \ref{cor1.2}.
    \begin{figure}[h!]
    \centering
    \includegraphics[scale=.55]{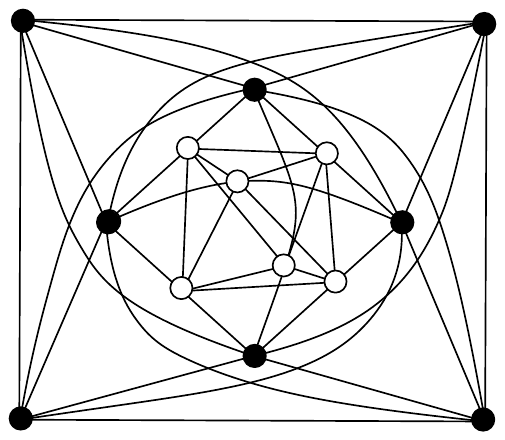}
    \caption{The graph $\tilde{H}$}
    \label{f3}
    \end{figure}
\end{example}

The subsequent corollary shows how to generate new pairs of graphs that are $A_\alpha$-cospectral, from an existing pair of such graphs.

\begin{corollary}\label{cor1.3}
    \begin{enumerate}[label={\upshape (\alph*)}]
        \item Consider two regular graphs $G_1$ and $G_2$, and an arbitrary graph $H$. Then $A_{\alpha}$-cospectralness of $G_1$ and $G_2$ implies that $G_1\dot{\vee}_Q H$ and $G_2 \dot{\vee}_Q H$ are $A_\alpha$-cospectral.
        \item Consider a regular graph $G$. Suppose $H_1$ and $H_2$ are two graphs satisfying the condition $\Upsilon_{A_{\alpha}(H_1)}(\nu) = \Upsilon_{A_{\alpha}(H_2)}(\nu)$. Then $A_{\alpha}$-cospectralness of $H_1$ and $H_2$ implies that $G\dot{\vee}_Q H_1$ and $G \dot{\vee}_Q H_2$ are $A_\alpha$-cospectral.
    \end{enumerate}
\end{corollary}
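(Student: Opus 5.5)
The plan is to read both parts directly off the formula in Theorem \ref{th1}. That formula expresses $\phi_{A_{\alpha}(G_1 \dot{\vee}_Q G_2)}(\nu)$ entirely through the following data:
\begin{itemize}
\item the parameters $n_1, m_1, t_1$ of the regular first factor;
\item the $A_\alpha$-eigenvalues $\lambda_i(A_\alpha(G_1))$ for $i=2,\ldots,n_1$;
\item the order $n_2$ of the second factor;
\item the polynomial $\phi_{A_\alpha(G_2)}$;
\item the coronal $\Upsilon_{A_\alpha(G_2)}$.
\end{itemize}
So it suffices to show that in each part all of this data agrees for the two joins being compared. Equal characteristic polynomials give equal $A_\alpha$-spectra.

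For part (a), let $G_1,G_2$ be regular and $A_\alpha$-cospectral. First I would show they share order, regularity and size. The order is the matrix size. For a $t$-regular graph, $A_\alpha = \alpha t I + (1-\alpha)A$ has largest eigenvalue $t$, because $A$ is nonnegative with Perron root $t$ and $\alpha t + (1-\alpha)t = t$. Hence the regularities coincide, and then $m = nt/2$ coincides as well.

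Next I would check that the multiset $\{\lambda_i(A_\alpha(G_1))\}_{i\ge 2}$ is the same for both graphs. This holds because in Theorem \ref{th1} the index $i=1$ corresponds to the eigenvalue $t_1$ (the all-ones eigenvector), and that eigenvalue has been removed from both spectra. The factors in Theorem \ref{th1} depending only on $H$, namely $n_2$, $\phi_{A_\alpha(H)}(\nu-\alpha n_1)$ and $\Upsilon_{A_\alpha(H)}(\nu-\alpha n_1)$, are common since $n_1$ is common. Substituting into Theorem \ref{th1} then gives identical polynomials.

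For part (b), the first factor $G$ is fixed, so the data $n_1, m_1, t_1$ and the eigenvalues $\lambda_i(A_\alpha(G))$ coincide trivially. Cospectrality of $H_1$ and $H_2$ gives equal orders $n_2$ and $\phi_{A_\alpha(H_1)} = \phi_{A_\alpha(H_2)}$, hence equality after the shift $\nu\mapsto \nu-\alpha n_1$. The hypothesis on the coronals supplies equality of the last factor. Theorem \ref{th1} then yields equal characteristic polynomials.

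The only delicate point I expect is that Theorem \ref{th1} was derived via Schur complements and inverses, so it holds as an identity of rational functions for $\nu$ outside finitely many values. Since both sides are polynomials, they agree identically. Part (a) also needs the observation that the distinguished eigenvalue $\lambda_1 = t_1$ is the one excluded from the product. If one wants non-isomorphism, it follows from the non-isomorphism of the factors, since the join structure recovers them; the statement itself, however, only asks for cospectrality.
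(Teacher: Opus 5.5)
Your proposal is correct and follows the paper's proof: the paper likewise notes that $A_\alpha$-cospectral regular graphs share order, size and regularity, then compares the expressions from Theorem~\ref{th1} term by term. Your Perron-root argument ($\lambda_1(A_\alpha)=t$ for a $t$-regular graph) and your remark that the excluded index $i=1$ corresponds to the eigenvalue $t_1$ fill in details the paper only asserts; your aside that the join ``recovers'' the factors (for non-isomorphism) is unproved, but the paper does not address non-isomorphism either.
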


\begin{proof}
    Two $A_{\alpha}$-cospectral regular graphs possess the exact same order, the same size, as well as the same regularity. We make use of this information and apply Theorem \ref{th1} to the graphs concerned. The results follow just by comparing their $A_{\alpha}$-characteristic polynomials.
\end{proof}

We provide an application of Corollary \ref{cor1.3} through an example below that demonstrates the construction of new $L_S$-cospectral graphs.

\begin{example}
    We take two regular graphs $G_1$ and $G_2$, as illustrated in Figure \ref{f4}. Since they are adjacency cospectral regular graphs (see \cite{which_graphs_by_vandam}), they are $L_S$-cospectral as well. Further, take $H$ as $P_2$. Applying Corollary \ref{cor1.3}(a) to these graphs with $\alpha = \frac{1}{2}$, and noting that for any graph $G$, $L_S(G) = 2A_{\frac{1}{2}}(G)$, we conclude that the graphs $G_1\dot{\vee}_Q H$ and $G_2\dot{\vee}_Q H$, shown in Figure \ref{f5}, are also $L_S$-cospectral.
\end{example}


\section{$A_{\alpha}$-characteristic polynomial of $Q$-edge join} \label{sec5}

\begin{theorem}\label{th2}
    Consider two graphs $G_1$ and $G_2$ with respective orders $n_1$ and $n_2$. Let $m_1$ be the size and $t_1$ be the vertex regularity of $G_1$. Then
    
    {\footnotesize
        \begin{align*}
            \phi_{A_{\alpha}(G_1 \underline{\vee}_Q G_2)}(\nu) &=\big(\nu+2-\alpha (2t_1 +n_2 +2)\big)^{m_1 -n_1} \phi_{A_{\alpha}(G_2)}(\nu- \alpha m_1)\\
            &\quad \prod_{i=2}^{n_1}\bigg({\nu}^2 -\Big( \alpha(t_1 +n_2 +2)+t_1 -2 +\lambda_i(A_{\alpha}(G_1))\Big)\nu \\
            &\quad+\alpha^2 t_1 n_2 + \big( \alpha(t_1 +1)-1\big)\big( t_1 +\lambda_i(A_{\alpha}(G_1))\big)\bigg)\\
            &\quad \bigg((\nu-\alpha t_1)(\nu-2\alpha t_1 -\alpha n_2) -(1-\alpha)(2t_1 -2)(\nu-\alpha t_1 +1 -\alpha)\\ 
            &\quad-2(1-\alpha)^2 -m_1(1-\alpha)^2(\nu-\alpha t_1)\Upsilon_{A_{\alpha}(G_2)}(\nu-\alpha m_1)   \bigg)
        \end{align*}
        }
\end{theorem}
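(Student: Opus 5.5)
I would mirror the proof of Theorem \ref{th1}, changing only the block structure. With $R$ the incidence matrix of $G_1$ and rows indexed by $V(G_1)\cup I(G_1)\cup V(G_2)$, the adjacency matrix of $G_1 \underline{\vee}_Q G_2$ has off-diagonal blocks $R$ and $J_{m_1\times n_2}$ (rather than $J_{n_1\times n_2}$), and the degree matrix is
\[
\mathrm{diag}\big(t_1 I_{n_1},\ (2t_1+n_2)I_{m_1},\ D(G_2)+m_1 I_{n_2}\big).
\]
Hence $\nu I - A_\alpha(G_1 \underline{\vee}_Q G_2)$ has the following blocks:
\begin{itemize}
\item diagonal blocks $(\nu-\alpha t_1)I_{n_1}$, $\ (\nu-\alpha(2t_1+n_2))I_{m_1}-(1-\alpha)A(\mathcal{L}(G_1))$ and $(\nu-\alpha m_1)I_{n_2}-A_\alpha(G_2)$;
\item off-diagonal blocks $-(1-\alpha)R$ and $-(1-\alpha)J_{m_1\times n_2}$.
\end{itemize}

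First I take the Schur complement with respect to the $G_2$ block (Lemma \ref{lem1}). This produces the factor $\phi_{A_\alpha(G_2)}(\nu-\alpha m_1)$ and modifies only the $I(G_1)$ diagonal block, which becomes
\[
M := (\nu-\alpha(2t_1+n_2))I_{m_1}-(1-\alpha)A(\mathcal{L}(G_1)) - (1-\alpha)^2\,\Upsilon_{A_\alpha(G_2)}(\nu-\alpha m_1)\,J_{m_1\times m_1}.
\]

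Next I eliminate the $V(G_1)$ block, which is a scalar matrix. By Lemma \ref{lem1} this gives the factor $(\nu-\alpha t_1)^{n_1}$ times
\[
\det\Big(M-\tfrac{(1-\alpha)^2}{\nu-\alpha t_1}R^TR\Big).
\]
I substitute $R^TR=A(\mathcal{L}(G_1))+2I_{m_1}$ from \eqref{eq3}. The matrix is then a scalar shift of a multiple of $A(\mathcal{L}(G_1))$, minus $c J$ with $c=(1-\alpha)^2\Upsilon_{A_\alpha(G_2)}(\nu-\alpha m_1)$. I apply Lemma \ref{lem3} to split off the $J$ term. The coronal needed there is that of a matrix with constant row sum $(1-\alpha)(2t_1-2)+\frac{(1-\alpha)^2}{\nu-\alpha t_1}\,2t_1$, so Lemma \ref{lem99.9} gives it in closed form. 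The remaining determinant is evaluated through Lemma \ref{lem7}:
\begin{itemize}
\item the eigenvalues of $A(\mathcal{L}(G_1))$ are $-2$ with multiplicity $m_1-n_1$, which yields the factor $(\nu+2-\alpha(2t_1+n_2+2))^{m_1-n_1}$ (the $-2$ eigenvalues kill the $R^TR$ contribution);
\item the remaining eigenvalues are $\lambda_i(A(G_1))+t_1-2$. I rewrite these in terms of $\lambda_i(A_\alpha(G_1))$ using $A_\alpha(G_1)=\alpha t_1 I+(1-\alpha)A(G_1)$.
\end{itemize}
Multiplying each such factor by $(\nu-\alpha t_1)$ absorbs $n_1$ copies of $(\nu-\alpha t_1)$ and produces the quadratics for $i=2,\dots,n_1$.

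The main obstacle is the $i=1$ term, where $\lambda_1(A_\alpha(G_1))=t_1$, so that $\lambda_1(A(\mathcal{L}(G_1)))=2t_1-2$. This must be combined with the factor $1-c\,\Upsilon(\cdot)$ from Lemma \ref{lem3}. The denominator of that coronal equals exactly the $i=1$ quadratic divided by $(\nu-\alpha t_1)$, so it cancels. What remains, after multiplying back by $(\nu-\alpha t_1)$, is
\[
(\nu-\alpha t_1)(\nu-2\alpha t_1-\alpha n_2)-(1-\alpha)(2t_1-2)(\nu-\alpha t_1+1-\alpha)-2(1-\alpha)^2-m_1(1-\alpha)^2(\nu-\alpha t_1)\Upsilon_{A_\alpha(G_2)}(\nu-\alpha m_1).
\]
I would verify this algebraic cancellation carefully, expanding the $i=1$ factor explicitly. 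I would also check the constant term $\alpha^2t_1n_2+(\alpha(t_1+1)-1)(t_1+\lambda_i)$ of the quadratics by direct expansion, since sign slips are likely there.
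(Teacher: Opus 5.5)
Your proposal is correct and is essentially the paper's own proof. Both use a Schur complement on the $G_2$ block, then on the scalar block $(\nu-\alpha t_1)I_{n_1}$, substitute $R^TR=A(\mathcal{L}(G_1))+2I_{m_1}$, split off the $J$ term by Lemma \ref{lem3} with the coronal from Lemma \ref{lem99.9}, and evaluate the remaining determinant by Lemma \ref{lem7}, with the $i=1$ factor cancelling the coronal's denominator as you describe. The algebra you flag does check out, and the only phrasing to watch is $t_1=1$: there $m_1<n_1$, so the factor with exponent $m_1-n_1$ must be read through the polynomial identity of Lemma \ref{lem7}, not as an eigenvalue multiplicity.
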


\begin{figure}[h!]
    \centering
    \includegraphics[scale=.5]{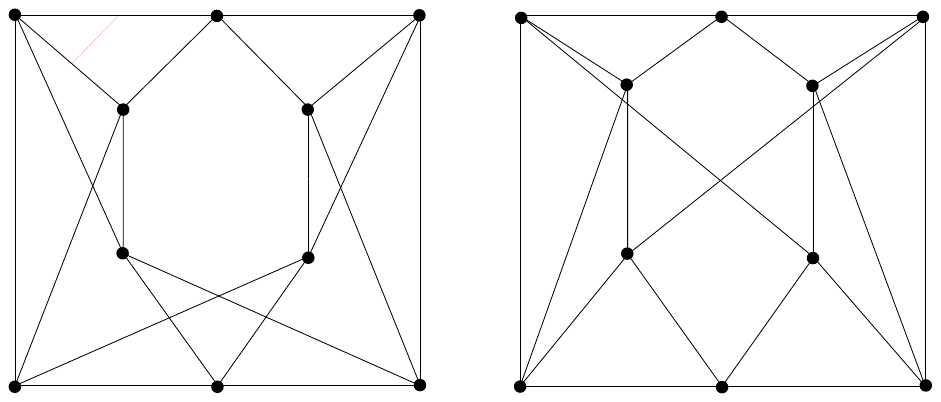}
    \caption{Two $L_S$-cospectral graphs $G_1$ and $G_2$}
    \label{f4}
    \end{figure}
    \begin{figure}[h!]
    \centering
    \includegraphics[scale=.6]{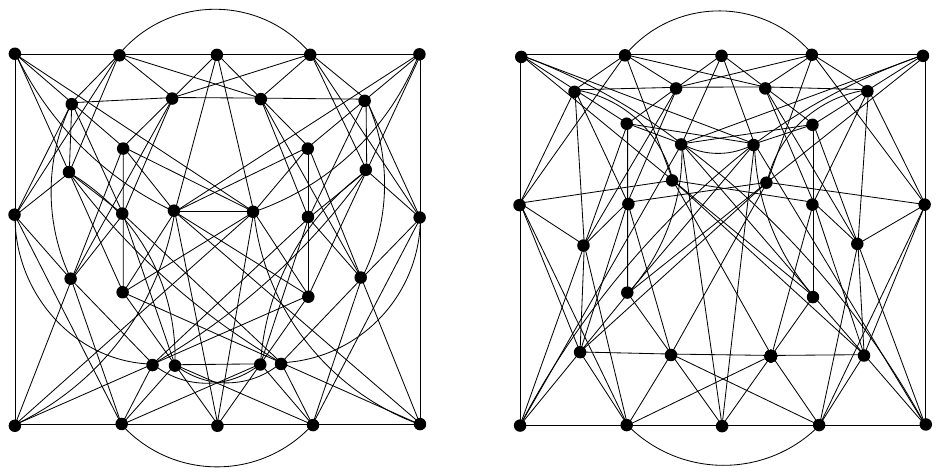}
    \caption{Two new $L_S$-cospectral graphs $G_1\dot{\vee}_Q P_2$ and $G_2\dot{\vee}_Q P_2$}
    \label{f5}
\end{figure}

\begin{proof}
We have
\begin{equation*}\label{eq38}
    A(G_1 \underline{\vee}_Q G_2) =
    \begin{bmatrix}
        0_{n_1 \times n_1} & R & 0_{n_1 \times n_2}\\
        R^T & A(\mathcal{L}(G_1)) & J_{m_1 \times n_2}\\
        0_{n_2 \times n_1} & J_{n_2 \times m_1} & A(G_2)
    \end{bmatrix}
\end{equation*}
and
\begin{equation*}\label{eq40}
    D(G_1 \underline{\vee} G_2) =
    \begin{bmatrix}
        t_1 I_{n_1} & 0_{n_1 \times m_1} & 0_{n_1 \times n_2}\\
        0_{m_ \times n_1} & (2t_1+n_2) I_{m_1} & 0_{m_1 \times n_2}\\
        0_{n_2 \times n_1} & 0_{n_2 \times m_1} & D(G_2) + m_1 I_{n_2}
    \end{bmatrix}.
\end{equation*}
Therefore
\begin{equation*}\label{eq41}
    A_{\alpha}(G_1 \underline{\vee}_Q G_2)=
    \begin{bmatrix}
        \alpha t_1 I_{n_1} & (1-\alpha)R & 0_{n_1 \times n_2}\\
        (1-\alpha)R^T & \alpha(2t_1+n_2)I_{m_1} +(1-\alpha)A(\mathcal{L}(G_1)) & (1-\alpha)J_{m_1 \times n_2}\\
        0_{n_2 \times n_1} & (1-\alpha)J_{n_2 \times m_1} & A_{\alpha}(G_2) + \alpha m_1 I_{n_2}
    \end{bmatrix}.
\end{equation*}
\begin{align}\label{eq42}
    \phi_{A_{\alpha}(G_1 \underline{\vee}_Q G_2)}(\nu)&= \det \big(\nu I_{n_1+m_1+n_2}-A_{\alpha}(G_1 \underline{\vee}_Q G_2) \big)\notag\\
    &=\resizebox{.75\textwidth}{!}{$
         \det 
         \begin{bmatrix}
              (\nu-\alpha t_1) I_{n_1} & -(1-\alpha)R & 0_{n_1 \times n_2}\\
              -(1-\alpha)R^T & (\nu-2\alpha t_1-\alpha n_2 ) I_{m_1}-(1-\alpha)A(\mathcal{L}(G_1)) & -(1-\alpha)J_{m_1 \times n_2}\\
              0_{n_2 \times n_1} & -(1-\alpha)J_{n_2 \times m_1} & (\nu-\alpha m_1) I_{n_2} - A_{\alpha}(G_2)
         \end{bmatrix}
                              $}\notag\\
    &=  \det \big( (\nu-\alpha m_1) I_{n_2} - A_{\alpha}(G_2) \big) \det S \quad (\text{by Lemma } \ref{lem1}),
\end{align}
where
\begingroup
\allowdisplaybreaks
\begin{align*}
    S &=
    \begin{bmatrix}
        (\nu-\alpha t_1) I_{n_1} & -(1-\alpha)R\\
        -(1-\alpha)R^T & (\nu-2\alpha t_1-\alpha n_2) I_{m_1} -(1-\alpha)A(\mathcal{L}(G_1))
    \end{bmatrix}\\
    &\quad-
    \begin{bmatrix}
        0_{n_1 \times n_2}\\
        -(1-\alpha)J_{m_1 \times n_2}
    \end{bmatrix}
    \Big( (\nu-\alpha m_1) I_{n_2} - A_{\alpha}(G_2) \Big)^{-1}
    \begin{bmatrix}
        0_{n_2 \times n_1} & -(1-\alpha)J_{n_2 \times m_1}
    \end{bmatrix}\\    
    &= \resizebox{.85\linewidth}{!}{${
    \begin{bmatrix}
        (\nu-\alpha t_1) I_{n_1} & -(1-\alpha)R \\
        -(1-\alpha)R^T & (\nu-2\alpha t_1 -\alpha n_2 ) I_{m_1} - (1-\alpha)A(\mathcal{L}(G_1))-(1-{\alpha})^2\Upsilon_{A_{\alpha}(G_2)}\big(\nu-\alpha m_1\big)J_{m_1 \times m_1}
    \end{bmatrix}
    }$}.   
\end{align*}
\endgroup

Applying Lemma \ref{lem1},
\begin{align}\label{eq3000}
    \det S &= \det \big( (\nu-\alpha t_1) I_{n_1}\big) \det \Big( (\nu-2\alpha t_1-\alpha n_2)I_{m_1}-(1-\alpha)A(\mathcal{L}(G_1)) \notag\\
    &\quad- (1-\alpha)^2\Upsilon_{A_{\alpha}(G_2)}(\nu-\alpha m_1)J_{m_1 \times m_1} -\frac{(1-\alpha)^2}{(\nu-\alpha t_1)}R^TR \Big)\notag
\end{align}
Using Lemma $\ref{lem3}$, we get the right hand side expression as
\begin{align}
    & (\nu-\alpha t_1)^{n_1} \det \Big(  (\nu-2\alpha t_1 - \alpha n_2) I_{m_1} -(1-\alpha)A(\mathcal{L}({G_1})) - \frac{(1-\alpha)^2}{(\nu-\alpha t_1)}R^TR \Big)\notag\\
    & \Big( 1-  (1-\alpha)^2\Upsilon_{A_{\alpha}(G_2)}(\nu-\alpha m_1) \Upsilon_{(1-\alpha)A(\mathcal{L}(G_1))+\frac{(1-\alpha)^2}{(\nu-\alpha t_1)}R^TR}(\nu-2\alpha t_1-\alpha n_2)  \Big)\notag
\end{align}

{\footnotesize
\begin{align}
    &= (\nu-\alpha t_1)^{n_1} \det \bigg( (\nu-2\alpha t_1 - \alpha n_2) I_{m_1} -\Big\{\frac{2(1-\alpha)^2}{(\nu-\alpha t_1)} I_{m_1} +\Big(1-\alpha +\frac{(1-\alpha)^2}{(\nu-\alpha t_1)} \Big)A(\mathcal{L}(G_1))\Big\} \bigg)\notag\\
    &\quad \Big( 1-  (1-\alpha)^2\Upsilon_{A_{\alpha}(G_2)}(\nu-\alpha m_1) \Upsilon_{(1-\alpha)A(\mathcal{L}(G_1))+\frac{(1-\alpha)^2}{(\nu-\alpha t_1)}R^TR}(\nu-2\alpha t_1-\alpha n_2)  \Big),\notag\\
    &=(\nu-\alpha t_1)^{n_1} xy, 
\end{align}
}
where 
\begin{equation*}
    x=\det \bigg( (\nu-2\alpha t_1 - \alpha n_2) I_{m_1} -\Big\{\frac{2(1-\alpha)^2}{(\nu-\alpha t_1)} I_{m_1} +\Big(1-\alpha +\frac{(1-\alpha)^2}{(\nu-\alpha t_1)} \Big)A(\mathcal{L}(G_1))\Big\} \bigg)
\end{equation*}
and 
\begin{equation*}
    y= \Big( 1-  (1-\alpha)^2\Upsilon_{A_{\alpha}(G_2)}(\nu-\alpha m_1) \Upsilon_{(1-\alpha)A(\mathcal{L}(G_1))+\frac{(1-\alpha)^2}{(\nu-\alpha t_1)}R^TR}(\nu-2\alpha t_1-\alpha n_2)  \Big).
\end{equation*}
Using Lemma \ref{lem7}, we get
\begingroup
\allowdisplaybreaks
\begin{align*}
    x &= \big(\nu-2\alpha t_1 -\alpha n_2 +2(1-\alpha)\big)^{m_1 -n_1}\\
    &\quad \prod_{i=1}^{n_1}\bigg(\nu-2\alpha t_1 -\alpha n_2 -\frac{2(1-\alpha)^2}{(\nu-\alpha t_1)} - \Big(1-\alpha+\frac{(1-\alpha)^2}{(\nu-\alpha t_1)} \Big)\big(\lambda_i(A(G_1)) +t_1 -2 \big) \bigg)\\
    &= \big(\nu+2-\alpha ( 2t_1 + n_2 +2)\big)^{m_1 -n_1}\prod_{i=1}^{n_1}\bigg(\nu-2\alpha t_1 -\alpha n_2 -\frac{2(1-\alpha)^2}{(\nu-\alpha t_1)}\\
    &\quad- \Big(1+\frac{1-\alpha}{\nu-\alpha t_1} \Big)\Big(\lambda_i(A_{\alpha}(G_1)) -2\alpha t_1 +t_1 -2 +2\alpha\Big) \bigg)\\
    &= \frac{\big(\nu+2-\alpha ( 2t_1 + n_2 +2)\big)^{m_1 -n_1}}{(\nu-\alpha t_1)^{n_1}}\\
    &\quad \Big((\nu-\alpha t_1)(\nu-2\alpha t_1 -\alpha n_2) -2(1-\alpha)^2 -2(\nu-\alpha t_1 +1 -\alpha)(1-\alpha)(t_1 -1)\Big)\\
    &\quad \prod_{i=2}^{n_1}\Big((\nu-\alpha t_1)(\nu-2\alpha t_1 -\alpha n_2) -2(1-\alpha)^2\\
    &\quad-2(\nu-\alpha t_1 +1 -\alpha )\big(\lambda_i(A_{\alpha}(G_1)) -2\alpha t_1 +t_1 -2 +2\alpha\big) \Big)
\end{align*}
\endgroup

\noindent We know that $R^TR = \mathcal{L}(G_1) +2I_{m_1}$. Also, as $G_1$ is $t_1$-regular, $\mathcal{L}(G_1)$ is $(2t_1 -2)$-regular. Using these we get each row sum of $(1-\alpha)A(\mathcal{L}(G_1))+\frac{(1-\alpha)^2}{(\nu-\alpha t_1)}R^TR$ is\\ $\frac{(1-\alpha)(2t_1 -2)(\nu-\alpha t_1) + 2t_1(1-\alpha)^2}{(\nu-\alpha t_1)}$. So
\begin{align*}
    &\quad\Upsilon_{(1-\alpha)A(\mathcal{L}(G_1))+\frac{(1-\alpha)^2}{(\nu-\alpha t_1)}R^TR}(\nu-2\alpha t_1-\alpha n_2)\\
    &\quad= \frac{m_1(\nu-\alpha t_1)}{(\nu-\alpha t_1)(\nu-2\alpha t_1 -\alpha n_2)-(1-\alpha)(2t_1 -2)(\nu-\alpha t_1 +1-\alpha) -2(1-\alpha)^2}. &&
\end{align*}
Therefore
\begin{align*}
    y &=  1-  (1-\alpha)^2\Upsilon_{A_{\alpha}(G_2)}(\nu-\alpha m_1) \Upsilon_{(1-\alpha)A(\mathcal{L}(G_1))+\frac{(1-\alpha)^2}{(\nu-\alpha t_1)}R^TR}(\nu-2\alpha t_1-\alpha n_2)\notag\\
    &=\frac{\begin{pmatrix}(\nu-\alpha t_1)(\nu-2\alpha t_1 -\alpha n_2) -(1-\alpha)(2t_1 -2)(\nu-\alpha t_1 +1 -\alpha)\\ -2(1-\alpha)^2-m_1(1-\alpha)^2(\nu-\alpha t_1) \Upsilon_{A_{\alpha}(G_2)}(\nu-\alpha m_1)\end{pmatrix}}{(\nu-\alpha t_1)(\nu-2\alpha t_1 - \alpha n_2)-(1-\alpha)(2t_1 -2)(\nu-\alpha t_1 +1 -\alpha) -2(1-\alpha)^2} 
\end{align*}
Using the newly obtained expressions for $x$ and $y$ in \eqref{eq3000}, we find $\det S$. Using that expression for $\det S$ in \eqref{eq42}, we finally get the desired result. 
\end{proof}

\begin{corollary}\label{cor2.1}
    Consider two graphs $G_1$ and $G_2$ with respective orders $n_1$ and $n_2$, and regularities $t_1$ and $t_2$. Further, the size of $G_1$ is $m_1$. Then
    \begin{enumerate}[label={\upshape (\alph*)}]
        \item When $t_1 =1$, the $A_{\alpha}$-spectrum of $G_1 \underline{\vee}_Q G_2$ comprises:
        \begin{enumerate}[label= {\upshape (\roman*)}]
            \item one simple eigenvalue $\alpha$;
            \item the eigenvalues $\alpha + \lambda_j( A_{\alpha}(G_2) )$ for $j=2,3, \ldots, n_2$ and
            \item three eigenvalues obtained by solving
            \begin{equation*}\label{eq56}
                (\nu-\alpha)(\nu-2\alpha -\alpha n_2)(\nu-\alpha -t_2) -2(1-\alpha)^2(\nu-\alpha -t_2) -(1-\alpha)^2(\nu-\alpha)n_2=0.
            \end{equation*}
        \end{enumerate}

        \item When $t_1 \geq 2$, the $A_{\alpha}$-spectrum of $G_1 \underline{\vee}_Q G_2$ comprises:
        \begin{enumerate}[label= {\upshape (\roman*)}]
            \item one eigenvalue $\alpha(2t_1+n_2+2) -2$ with multiplicity $m_1 -n_1$;
            \item the eigenvalues $\alpha m_1 + \lambda_j(A_{\alpha}(G_2))$ for $j=2,3, \ldots, n_2$;
            \item eigenvalues obtained by solving each of the equations,
            \begin{align*}
                {\nu}^2 &-\big(\alpha(t_1 +n_2 +2) +t_1 -2 +\lambda_i(A_{\alpha}(G_1))\big)\nu \\
                &+\alpha^2 t_1 n_2 +\big(\alpha(t_1 +1)-1\big)\big(t_1 +\lambda_i(A_{\alpha}(G_1))\big)=0 \enskip \text{for }  i= 2,3,\ldots,n_1 \enskip \text{and}
            \end{align*}
            \item three eigenvalues obtained by solving the equation,            \begin{align*}
                &(\nu-\alpha t_1)(\nu-2\alpha t_1 -\alpha n_2)(\nu-\alpha m_1 -t_2)\\ 
                &-2(1-\alpha)(t_1 -1)(\nu-\alpha t_1 +1 -\alpha)(\nu-\alpha m_1 -t_2)\\
                &-2(1-\alpha)^2(\nu-\alpha m_1 -t_2) -m_1 n_2(1-\alpha)^2(\nu-\alpha t_1)=0.
            \end{align*}
        \end{enumerate}
    \end{enumerate}
\end{corollary}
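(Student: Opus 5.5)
We specialise Theorem \ref{th2} to a $t_2$-regular $G_2$, following the same route as the proof of Corollary \ref{cor1.1}. Since $A_{\alpha}(G_2)=\alpha t_2 I_{n_2}+(1-\alpha)A(G_2)$ has constant row sum $t_2$, Lemma \ref{lem99.9} gives
\[
\Upsilon_{A_{\alpha}(G_2)}(\nu-\alpha m_1)=\frac{n_2}{\nu-\alpha m_1-t_2}.
\]
Moreover $t_2=\lambda_1(A_{\alpha}(G_2))$, because $\boldsymbol{1_{n_2}}$ is an eigenvector. We will therefore write
\[
\phi_{A_{\alpha}(G_2)}(\nu-\alpha m_1)=(\nu-\alpha m_1-t_2)\prod_{j=2}^{n_2}\big(\nu-\alpha m_1-\lambda_j(A_{\alpha}(G_2))\big).
\]
We then multiply the factor $(\nu-\alpha m_1-t_2)$ into the last bracket of Theorem \ref{th2}. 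This clears the denominator coming from the coronal and turns that bracket into the cubic
\[
(\nu-\alpha t_1)(\nu-2\alpha t_1-\alpha n_2)(\nu-\alpha m_1-t_2)-(1-\alpha)(2t_1-2)(\nu-\alpha t_1+1-\alpha)(\nu-\alpha m_1-t_2)-2(1-\alpha)^2(\nu-\alpha m_1-t_2)-m_1n_2(1-\alpha)^2(\nu-\alpha t_1).
\]
The result is a polynomial identity, analogous to \eqref{eq28}, with four kinds of factor:
\begin{itemize}
\item the power $(\nu+2-\alpha(2t_1+n_2+2))^{m_1-n_1}$;
\item the linear factors $\nu-\alpha m_1-\lambda_j(A_{\alpha}(G_2))$ for $j\ge 2$;
\item the $n_1-1$ quadratics in $\lambda_i(A_{\alpha}(G_1))$;
\item the cubic above.
\end{itemize}

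For part (b), with $t_1\ge 2$, the graph $G_1$ has no pendant vertex and so is not a tree. Hence $m_1\ge n_1$, the exponent is a genuine multiplicity, and items (i)--(iv) can be read off directly. The degrees account for every eigenvalue, since
\[
(m_1-n_1)+(n_2-1)+2(n_1-1)+3=n_1+m_1+n_2 .
\]

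For part (a), with $t_1=1$, we must have $G_1=P_2$ (assuming $G_1$ connected, as the paper does in Corollary \ref{cor1.1}), so $n_1=2$ and $m_1=1$. The exponent is then $m_1-n_1=-1$, and the quadratic product has the single factor $i=2$ with $\lambda_2(A_{\alpha}(P_2))=2\alpha-1$. The main computation is to substitute $t_1=1$, $\lambda_2=2\alpha-1$ and $m_1=1$ into that quadratic. We expect it to factor as
\[
\nu^2-(\alpha n_2+4\alpha-2)\nu+\alpha^2 n_2+(2\alpha-1)2\alpha=(\nu-\alpha)\big(\nu-\alpha(n_2+2)+2\big).
\]
The second factor is exactly the one that cancels against the prefactor $(\nu+2-\alpha(n_2+4))^{-1}$, leaving the simple eigenvalue $\alpha$ of item (i).

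The remaining substitutions are routine. With $t_1=1$ the cubic loses its $(2t_1-2)$ term and becomes the equation in (a)(iii). The shifts $\alpha m_1=\alpha$ give (a)(ii). As a final check, the count $1+(n_2-1)+3=n_2+3$ equals $n_1+m_1+n_2$. The only delicate step is confirming the cancellation in part (a) by expanding the quadratic carefully.
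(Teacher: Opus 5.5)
Your approach is the same as the paper's, which just says the argument parallels Corollary \ref{cor1.1}. You substitute $\Upsilon_{A_\alpha(G_2)}(\nu-\alpha m_1)=n_2/(\nu-\alpha m_1-t_2)$ into Theorem \ref{th2}, absorb the factor $\nu-\alpha m_1-t_2$ into the last bracket, and read off the spectrum. Your part (b), the cubic, and the degree counts are all correct.

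However, the one step you flag as delicate is computed incorrectly, and as written the cancellation giving item (a)(i) does not go through.

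\begin{itemize}
\item With $t_1=1$ and $\lambda_2(A_\alpha(P_2))=2\alpha-1$, the $\nu$-coefficient of the quadratic is $\alpha(t_1+n_2+2)+t_1-2+\lambda_2=\alpha(n_2+3)-1+(2\alpha-1)=\alpha n_2+5\alpha-2$, not $\alpha n_2+4\alpha-2$.
\item Your right-hand side $(\nu-\alpha)(\nu-\alpha(n_2+2)+2)$ expands to $\nu^2-(\alpha n_2+3\alpha-2)\nu+\alpha^2n_2+2\alpha^2-2\alpha$. This equals neither your left-hand side nor the correct one.
\item The factor $\nu-\alpha(n_2+2)+2$ is not the base $\nu+2-\alpha(n_2+4)$ of the prefactor you say it cancels.
\end{itemize}

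The correct identity is
\[
\nu^2-(\alpha n_2+5\alpha-2)\nu+\alpha^2n_2+4\alpha^2-2\alpha=(\nu-\alpha)\big(\nu+2-\alpha(n_2+4)\big).
\]
Its second factor is exactly $\nu+2-\alpha(2t_1+n_2+2)$ at $t_1=1$. It therefore cancels the power $m_1-n_1=-1$ and leaves the simple eigenvalue $\alpha$, after which the rest of your part (a) is fine.

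This cancellation is structural, not accidental. $\lambda_2(A(P_2))=-t_1$ is the bipartite eigenvalue that produces the extra line-graph eigenvalue $-2$. For any bipartite regular $G_1$, the corresponding quadratic factors as $(\nu-\alpha t_1)\big(\nu+2-\alpha(2t_1+n_2+2)\big)$.

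You can also bypass the negative exponent entirely for $t_1=1$ by working on $P_2\underline{\vee}_Q G_2$ directly:
\begin{itemize}
\item The vector equal to $1$ on $v_1$, $-1$ on $v_2$ and $0$ elsewhere is an eigenvector with eigenvalue $\alpha$.
\item Eigenvectors of $A_\alpha(G_2)$ orthogonal to $\boldsymbol{1_{n_2}}$ give $\alpha+\lambda_j(A_\alpha(G_2))$.
\item The partition $\{v_1,v_2\},\{v'_1\},V(G_2)$ is equitable. Its $3\times 3$ quotient matrix has rows $(\alpha,\,1-\alpha,\,0)$, $(2(1-\alpha),\,\alpha(2+n_2),\,n_2(1-\alpha))$ and $(0,\,1-\alpha,\,\alpha+t_2)$.
\item Its characteristic polynomial is precisely the cubic in (a)(iii).
\end{itemize}
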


\begin{proof}
    The proof follows a similar approach as the proof of Corollary $\ref{cor1.1}$.
\end{proof}

\begin{corollary}\label{cor2.2}
    Consider a graph $G_1$ of order $n_1$, size $m_1$ and regularity $t_1$. Let $G_2=K_{a,b}$ be the complete bipartite graph of order $n_2 = a+b$. Then
   \begin{enumerate}[label={\upshape (\alph*)}]
        \item When $t_1 =1$, the $A_{\alpha}$-spectrum of $G_1 \underline{\vee}_Q G_2$ comprises:
        \begin{enumerate}[label= {\upshape (\roman*)}]
            \item one simple eigenvalue $\alpha$;
            \item one eigenvalue $\alpha (1+a)$ with multiplicity $b-1$;
            \item one eigenvalue $\alpha (1+b)$ with multiplicity $a-1$ and
            \item four eigenvalues obtained by solving the equation,
            
            {\small
            \begin{align*}
                \big( {\nu}^2 -\alpha(3+n_2)\nu + ({\alpha}^2n_2 +4\alpha &-2) \big) \big( {\nu}^2 -\alpha (2 + n_2)\nu +( {\alpha}^2 + {\alpha}^2n_2 +2\alpha ab -ab) \big)\\
                &-(1-\alpha)^2(\nu-\alpha) \big( (\nu-\alpha)n_2 -\alpha {n_2}^2 + 2ab \big)=0.
            \end{align*}
            }
        \end{enumerate}

        \item When $t_1 \geq 2$, the $A_{\alpha}$-spectrum of $G_1 \underline{\vee}_Q G_2$ comprises:
        \begin{enumerate}[label= {\upshape (\roman*)}]
            \item one eigenvalue $\alpha(2t_1+n_2 +2) -2$ with multiplicity $m_1 -n_1$;
            \item one eigenvalue $\alpha (m_1 +a)$ with multiplicity $b-1$;
            \item one eigenvalue $\alpha (m_1 +b)$ with multiplicity $a-1$;
            \item eigenvalues obtained by solving the equation,
            \begin{align*}
                {\nu}^2 &-\big( \alpha(t_1 +n_2 +2)+t_1 -2 +\lambda_i(A_{\alpha}(G_1))\big)\nu \\
                &+\alpha^2 t_1 n_2 + \big( \alpha(t_1 +1)-1\big)\big( t_1 +\lambda_i(A_{\alpha}(G_1))\big)=0 \enskip \text{for } i=2,3,\ldots,n_1 \enskip \text{and}
            \end{align*}
            \item four eigenvalues obtained by solving the equation,
            \begin{align*}
                &\Big( {\nu}^2 -(\alpha t_1 + \alpha n_2 +2t_1 -2 +2\alpha)\nu + ({\alpha}^2t_1 n_2 +2\alpha t_1^2 +2\alpha t_1 -2t_1)   \Big) \\
                &\Big( {\nu}^2 -(2\alpha m_1 + \alpha n_2)\nu +( {\alpha}^2m_1^2 + {\alpha}^2m_1n_2 +2\alpha ab -ab)  \Big)\\
                &-m_1(1-\alpha)^2(\nu-\alpha t_1)\Big((\nu-\alpha m_1)n_2 -\alpha n_2^2 +2ab \Big)=0.
            \end{align*}
        \end{enumerate}
    \end{enumerate}
\end{corollary}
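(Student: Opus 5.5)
We substitute the $K_{a,b}$ data into Theorem \ref{th2}, mirroring the proof of Corollary \ref{cor1.2}. By Lemma \ref{lem4}, $\phi_{A_{\alpha}(G_2)}(\nu-\alpha m_1)$ factors as
\[
(\nu-\alpha m_1-\alpha a)^{b-1}(\nu-\alpha m_1-\alpha b)^{a-1}\, q(\nu),
\qquad
q(\nu)=(\nu-\alpha m_1)^2-\alpha n_2(\nu-\alpha m_1)+(2\alpha-1)ab.
\]
The two roots of $q$ are the shifted extreme eigenvalues $\alpha m_1+\frac{\alpha(a+b)\pm\sqrt{\alpha^2(a+b)^2+4ab(1-2\alpha)}}{2}$.

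By Lemma \ref{lem6}, the coronal $\Upsilon_{A_\alpha(G_2)}(\nu-\alpha m_1)$ equals
\[
\frac{(\nu-\alpha m_1)n_2-\alpha n_2^2+2ab}{q(\nu)}.
\]
We multiply the last bracketed factor of Theorem \ref{th2} by $q(\nu)$, absorbing $q$ from $\phi_{A_\alpha(G_2)}$. This yields $P(\nu)\,q(\nu)-m_1(1-\alpha)^2(\nu-\alpha t_1)\big((\nu-\alpha m_1)n_2-\alpha n_2^2+2ab\big)$, where $P(\nu)$ is the quadratic $(\nu-\alpha t_1)(\nu-2\alpha t_1-\alpha n_2)-(1-\alpha)(2t_1-2)(\nu-\alpha t_1+1-\alpha)-2(1-\alpha)^2$.

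The next step is to expand $P$ and $q$ and check that they match the two quadratics in (b)(v).

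For $q$: expanding $(\nu-\alpha m_1)^2-\alpha n_2(\nu-\alpha m_1)+(2\alpha-1)ab$ gives $\nu^2-(2\alpha m_1+\alpha n_2)\nu+\alpha^2m_1^2+\alpha^2m_1n_2+2\alpha ab-ab$ immediately.

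For $P$: the $\nu$-coefficient is $-(3\alpha t_1+\alpha n_2)-(1-\alpha)(2t_1-2)=-(\alpha t_1+\alpha n_2+2t_1-2+2\alpha)$, as claimed. The constant term is $\alpha t_1(2\alpha t_1+\alpha n_2)-(1-\alpha)(2t_1-2)(1-\alpha-\alpha t_1)-2(1-\alpha)^2$. We will expand this and verify it reduces to $\alpha^2t_1n_2+2\alpha t_1^2+2\alpha t_1-2t_1$. This bookkeeping is the main (purely algebraic) obstacle. If it fails, we will recheck the constant-term simplification in the proof of Theorem \ref{th2}.

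This already gives (b): the factor $(\nu+2-\alpha(2t_1+n_2+2))^{m_1-n_1}$ yields (i), since $m_1\ge n_1$ when $t_1\ge2$ ($G_1$ has no pendant vertices, so it is not a tree). The factors $(\nu-\alpha m_1-\alpha a)^{b-1}$ and $(\nu-\alpha m_1-\alpha b)^{a-1}$ give (ii) and (iii). The quadratics indexed by $i=2,\dots,n_1$ give (iv), and the quartic gives (v).

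For (a), $t_1=1$ forces $G_1=P_2$, with $n_1=2$, $m_1=1$ and $A_\alpha$-spectrum $\{1,2\alpha-1\}$. Then $(\nu+2-\alpha(4+n_2))^{-1}$ appears. We will check that it cancels against the single $i=2$ factor: with $\lambda_2=2\alpha-1$, that quadratic is $\nu^2-(\alpha(3+n_2)-2+2\alpha)\nu+\alpha^2n_2+(2\alpha-1)2\alpha$, and it should factor as $(\nu-\alpha)(\nu+2-\alpha(4+n_2))$. Checking: the product of the roots is $\alpha(2-4\alpha-\alpha n_2)$ up to sign and the sum is $\alpha(5+n_2)-2$, which matches. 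The surviving factor $\nu-\alpha$ is eigenvalue (i).

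Finally, substituting $t_1=1$, $m_1=1$ into the quartic gives $P=\nu^2-\alpha(3+n_2)\nu+\alpha^2n_2+4\alpha-2$ and $q=\nu^2-\alpha(2+n_2)\nu+\alpha^2+\alpha^2n_2+2\alpha ab-ab$, which is (a)(iv). The multiplicities $b-1$ and $a-1$ give (ii) and (iii).
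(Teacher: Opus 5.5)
Your proposal is correct and takes the same route as the paper. The paper proves this corollary ``by a similar approach as Corollary \ref{cor1.2}'': substitute Lemmas \ref{lem4} and \ref{lem6} into Theorem \ref{th2}, absorb the quadratic $q(\nu)$ into the last factor, and handle $t_1=1$ via $G_1=P_2$ (both you and the paper tacitly take $G_1$ connected there). Your explicit checks supply details the paper leaves implicit: the constant term of $P(\nu)$ does reduce to $\alpha^2t_1n_2+2\alpha t_1^2+2\alpha t_1-2t_1$, and in case (a) the $i=2$ quadratic factors as $(\nu-\alpha)(\nu+2-\alpha(4+n_2))$, which cancels the negative power.
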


\begin{proof}
    The proof follows a similar approach as the proof of Corollary $\ref{cor1.2}$.
\end{proof}

\begin{corollary}\label{cor2.3}
    \begin{enumerate}[label={\upshape (\alph*)}]
        \item Consider two regular graphs $G_1$ and $G_2$, and an arbitrary graph $H$. Then $A_{\alpha}$-cospectralness of $G_1$ and $G_2$ implies that $G_1\underline{\vee}_Q H$ and $G_2 \underline{\vee}_Q H$ are $A_\alpha$-cospectral.
        \item Consider a regular graph $G$. Suppose $H_1$ and $H_2$ are two graphs satisfying the condition $\Upsilon_{A_{\alpha}(H_1)}(\nu) = \Upsilon_{A_{\alpha}(H_2)}(\nu)$. Then $A_{\alpha}$-cospectralness of $H_1$ and $H_2$ implies that $G\underline{\vee}_Q H_1$ and $G \underline{\vee}_Q H_2$ are $A_\alpha$-cospectral.
    \end{enumerate}
\end{corollary}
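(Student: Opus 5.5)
The plan is to read both parts directly off the closed formula in Theorem \ref{th2}, exactly as in the proof of Corollary \ref{cor1.3}. For any $t_1$-regular graph $G_1$ of order $n_1$ and size $m_1$, and any graph $G_2$ of order $n_2$, Theorem \ref{th2} writes $\phi_{A_{\alpha}(G_1 \underline{\vee}_Q G_2)}(\nu)$ as a product of factors. These factors depend on only a few quantities: the parameters $n_1, m_1, t_1$; the non-principal eigenvalues $\lambda_i(A_{\alpha}(G_1))$, $i=2,\ldots,n_1$; the order $n_2$; the polynomial $\phi_{A_{\alpha}(G_2)}$ evaluated at $\nu-\alpha m_1$; and the coronal $\Upsilon_{A_{\alpha}(G_2)}$ evaluated at $\nu-\alpha m_1$. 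So it suffices to show that, under the hypotheses of each part, these data agree for the two joins being compared. Then the characteristic polynomials coincide, and the joins are $A_\alpha$-cospectral. One also has to note that the two joins are non-isomorphic whenever the inputs are, which is implicit in the paper's usage of the statement.

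For part (a), I will first show that two $A_\alpha$-cospectral regular graphs $G_1, G_2$ share order, regularity and size. The order equals the number of eigenvalues. For a $t$-regular graph, $A_\alpha = \alpha t I + (1-\alpha)A$, so $\operatorname{tr}A_\alpha(G) = \alpha t n$; this determines $t$ when $\alpha>0$. For $\alpha=0$ I will instead use that the largest adjacency eigenvalue of a regular graph equals its degree. This works for every $\alpha$, because $\lambda_1(A_\alpha) = t$ for a $t$-regular graph. The size is then $m = nt/2$. Moreover, the principal eigenvalue $t$ is the same for both graphs, so the remaining multisets $\{\lambda_i(A_{\alpha}(G_k))\}_{i\ge 2}$ coincide. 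The $H$-dependent factors are literally the same function for both joins. Plugging into Theorem \ref{th2} therefore gives identical polynomials.

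For part (b), $G$ is fixed, so all $G$-data agree. Cospectrality of $H_1, H_2$ gives equal orders $n_2$ and $\phi_{A_\alpha(H_1)} = \phi_{A_\alpha(H_2)}$, hence equal values at $\nu-\alpha m_1$. The hypothesis $\Upsilon_{A_{\alpha}(H_1)} = \Upsilon_{A_{\alpha}(H_2)}$ identifies the last factor. Substituting into Theorem \ref{th2} again yields equality.

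The only point requiring care is the regularity argument in part (a). The step I expect to be the main obstacle is justifying that the factorization of Theorem \ref{th2} uses $\lambda_1(A_\alpha(G_1))=t_1$ as the distinguished eigenvalue. Removing it from the multiset is what makes the product over $i\ge 2$ well defined and equal for both graphs. I will argue this via $\lambda_1(A_\alpha) = \alpha t_1 + (1-\alpha)t_1 = t_1$, with the all-ones eigenvector; this also covers the case $\alpha=1$.
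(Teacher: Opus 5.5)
Your proposal is correct and is essentially the paper's argument, namely the one written out for Corollary \ref{cor1.3} and transferred to Theorem \ref{th2}. Cospectral regular graphs share $n_1$, $t_1=\lambda_1(A_\alpha)$ and $m_1=n_1t_1/2$, so every factor of Theorem \ref{th2} agrees; in (b), equality of $\phi_{A_\alpha(H_k)}$ and $\Upsilon_{A_\alpha(H_k)}$ does the same job. Your justification of the shared regularity via $\lambda_1(A_\alpha)=t$ is a detail the paper leaves unstated.

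The paper does not prove non-isomorphism of the joins either, which you defer. If you want it, take $n_1\ge 3$ and $t_1\ge 1$. Then the copy of $V(G_i)$ is exactly the set of vertices of degree $t_1$ in the join. You recover $G_i$ on it by declaring two such vertices adjacent iff they have a common neighbour. Deleting these vertices together with their neighbours leaves exactly $H_k$.
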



\section{$A_{\alpha}$-characteristic polynomial of $T$-vertex join} \label{sec6}

\begin{theorem}\label{th101}
    Consider two graphs $G_1$ and $G_2$ with respective orders $n_1$ and $n_2$. Let $m_1$ be the size and $t_1$ be the vertex regularity of $G_1$. Then
    \begin{align} \label{eq4001}
         \resizebox{.15\textwidth}{!}{$\phi_{A_{\alpha}(G_1 \dot{\vee}_T G_2)}(\nu)$} &\resizebox{.45\textwidth}{!}{$=(\nu +2 -2\alpha -2\alpha t_1)^{m_1 -n_1} \phi_{A_{\alpha}(G_2)}(\nu- \alpha n_1)$}\notag\\ 
         &\quad \prod_{i=2}^{n_1}\resizebox{.5\textwidth}{!}{$\bigg({\nu}^2 +\Big(2-t_1-2\alpha -\alpha t_1 -\alpha n_2 -2\lambda_i(A_\alpha(G_1))\Big)\nu$}\notag\\
        &\quad \resizebox{.82\textwidth}{!}{$-(1-\alpha)\Big(t_1 -2\alpha t_1 +\lambda_i(A_\alpha(G_1))\Big)
        +\Big(\alpha t_1 +\alpha n_2 + \lambda_i(A_{\alpha}(G_1)\Big)\Big(t_1 -2+2\alpha + \lambda_i(A_{\alpha}(G_1))\Big) \bigg)$}\notag\\
        &\quad \resizebox{.55\textwidth}{!}{$\bigg( (\nu-t_1-\alpha t_1 -\alpha n_2)(\nu+2-2\alpha -2t_1) -2t_1(1-\alpha)^2$}\notag\\
        &\quad \resizebox{.5\textwidth}{!}{$-n_1(1-\alpha)^2(\nu+2-2\alpha-2t_1) \Upsilon_{A_{\alpha}(G_2)}(\nu-\alpha n_1) \bigg)$}.
    \end{align}
\end{theorem}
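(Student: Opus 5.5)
We follow the template of the proof of Theorem \ref{th1}; the only structural difference is that in $T(G_1)$ the original vertices of $G_1$ remain adjacent to each other. With the same ordering $V(G_1)\cup I(G_1)\cup V(G_2)$ and $R$ the incidence matrix of $G_1$, the adjacency matrix of $G_1\dot{\vee}_T G_2$ is the block matrix of \eqref{eq12} with the $(1,1)$ block replaced by $A(G_1)$. Degrees are now $2t_1+n_2$ on $V(G_1)$, $2t_1$ on $I(G_1)$, and $D(G_2)+n_1I_{n_2}$ on $V(G_2)$. Hence the $(1,1)$ block of $\nu I-A_\alpha(G_1\dot{\vee}_T G_2)$ is
\[
(\nu-\alpha(2t_1+n_2))I_{n_1}-(1-\alpha)A(G_1)=(\nu-\alpha t_1-\alpha n_2)I_{n_1}-A_\alpha(G_1).
\]
The remaining blocks coincide with those in the proof of Theorem \ref{th1}.

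First we apply Lemma \ref{lem1} with the $G_2$-block as the pivot. This produces the factor $\phi_{A_\alpha(G_2)}(\nu-\alpha n_1)$ and a $2\times 2$ block matrix $S$ whose $(1,1)$ block is $(\nu-\alpha t_1-\alpha n_2)I_{n_1}-A_\alpha(G_1)-(1-\alpha)^2\Upsilon J_{n_1\times n_1}$, where $\Upsilon=\Upsilon_{A_\alpha(G_2)}(\nu-\alpha n_1)$.

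Next we take a second Schur complement of $S$, this time pivoting on the $m_1\times m_1$ block $(\nu-2\alpha t_1)I_{m_1}-(1-\alpha)A(\mathcal L(G_1))$. This avoids having to invert a matrix involving $A(G_1)$ plus $J$. By \eqref{eq3}, this block equals $(\nu-2\alpha t_1+2(1-\alpha))I-(1-\alpha)R^TR$. A push-through identity then gives
\[
R\big(cI-(1-\alpha)R^TR\big)^{-1}R^T=\big(cI-(1-\alpha)RR^T\big)^{-1}RR^T,
\]
and by \eqref{eq4}, $RR^T=A(G_1)+t_1I$. Consequently the determinant of the pivot block is, via Lemma \ref{lem7}, equal to $(\nu+2-2\alpha-2\alpha t_1)^{m_1-n_1}$ times the product over $i$ of $\big(\nu-2\alpha t_1-(1-\alpha)(\lambda_i(A(G_1))+t_1-2)\big)$. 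The complement itself becomes a polynomial in $A(G_1)$ minus $(1-\alpha)^2\Upsilon J$.

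Since $G_1$ is regular, $A(G_1)$, $RR^T$ and $J$ all commute and share the eigenvector $\mathbf 1_{n_1}$. We therefore diagonalize simultaneously, as in the derivation of $y$ and $z$ in Theorem \ref{th1}. For the eigenvalues $\lambda_i$, $i\ge2$, whose eigenvectors are orthogonal to $\mathbf 1$, the $J$ term vanishes. Each such eigenvalue contributes
\[
\big(\nu-\alpha t_1-\alpha n_2-\lambda_i(A_\alpha(G_1))\big)-\frac{(1-\alpha)^2(\lambda_i(A(G_1))+t_1)}{\nu-2\alpha t_1-(1-\alpha)(\lambda_i(A(G_1))+t_1-2)}.
\]
Multiplying by the matching factor of the pivot determinant and rewriting $(1-\alpha)\lambda_i(A(G_1))=\lambda_i(A_\alpha(G_1))-\alpha t_1$ yields the stated quadratic in $\nu$. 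For $i=1$, where $\lambda_1(A_\alpha(G_1))=t_1$, we use Lemma \ref{lem3} (equivalently Lemma \ref{lem99.9}) on the $\mathbf 1$-direction, where the $J$ term contributes $n_1(1-\alpha)^2\Upsilon$. Clearing the denominator $\nu+2-2\alpha-2t_1$ then gives the last factor.

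The main obstacle is the bookkeeping in this final simplification. We must check that the $\nu$-dependent denominators introduced by the two Schur complements cancel exactly against the factors coming from Lemma \ref{lem7}, and that the coefficients assemble into the precise expressions in \eqref{eq4001}. In particular, the $i=1$ factor should read $(\nu-t_1-\alpha t_1-\alpha n_2)(\nu+2-2\alpha-2t_1)-2t_1(1-\alpha)^2-\dots$. We will verify this by expanding both sides and, as a sanity check, by testing the special case $G_1=K_3$ with $G_2=K_1$ and $\alpha=0$.
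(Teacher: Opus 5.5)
Your proposal is correct, and the final simplification closes as you expect. Write $\mu_i=\lambda_i(A_\alpha(G_1))$ and $c:=\nu+2-2\alpha-2\alpha t_1$. Your pivot factor is $d_i=\nu-2\alpha t_1-(1-\alpha)(\lambda_i(A(G_1))+t_1-2)=\nu+2-2\alpha-t_1-\mu_i$, and $(1-\alpha)^2(\lambda_i(A(G_1))+t_1)=(1-\alpha)(t_1-2\alpha t_1+\mu_i)$. With these, $d_i$ times your eigenvalue expands exactly to the stated quadratic. For $i=1$ you have $d_1=\nu+2-2\alpha-2t_1$, $RR^T\mathbf 1_{n_1}=2t_1\mathbf 1_{n_1}$ and $J\mathbf 1_{n_1}=n_1\mathbf 1_{n_1}$, which give precisely the last factor.

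Your route differs from the paper's only in the second reduction. The paper never inverts $cI_{m_1}-(1-\alpha)R^TR$. Instead it premultiplies the first block row of $S$ by $-R^T$ and adds it to the second row. This cancels the $R^TR$ term exactly and leaves the scalar block $cI_{m_1}$. The Schur complement on that block then yields $c^{m_1-n_1}\det U$, where $U$ is a genuine polynomial in $A(G_1)$ and $J_{n_1\times n_1}$, so its eigenvalues are read off with no denominators to cancel. Your push-through and Lemma \ref{lem7} route costs that cancellation step, and it also requires working generically in $\nu$ (invertibility of both pivots), which is harmless for a polynomial identity. In return, it makes the origin of the factor $c^{m_1-n_1}$ transparent, as coming from the line-graph spectrum, and it parallels the $Q$-join computations.

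One small correction: after the push-through, your Schur complement contains $(cI-(1-\alpha)RR^T)^{-1}$. It is therefore a rational function of $A(G_1)$ minus $(1-\alpha)^2\Upsilon J$, not a polynomial. This does not affect the simultaneous diagonalization.
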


\begin{proof}
We have
\begin{equation}\label{eq4002}
    A(G_1 \dot{\vee}_T G_2) =
    \begin{bmatrix}
        A(G_1) & R & J_{n_1 \times n_2}\\
        R^T & A(\mathcal{L}(G_1)) & 0_{m_1 \times n_2}\\
        J_{n_2 \times n_1} & 0_{n_2 \times m_1} & A(G_2)
    \end{bmatrix}.
\end{equation}
and
\begin{equation}\label{eq4003}
    D(G_1 \dot{\vee}_T G_2) =
    \begin{bmatrix}
        (2t_1 + n_2)I_{n_1} & 0_{n_1 \times m_1} & 0_{n_1 \times n_2}\\
        0_{m_1 \times n_1} & 2t_1I_{m_1} & 0_{m_1 \times n_2}\\
        0_{n_2 \times n_1} & 0_{n_2 \times m_1} & D(G_2) + n_1I_{n_2}
    \end{bmatrix}.
\end{equation}
Using \eqref{eq4002} and \eqref{eq4003}, we get
\small
\begin{equation*}\label{eq4004}
    A_{\alpha}(G_1 \dot{\vee}_T G_2)=
    \begin{bmatrix}
        \alpha(t_1 + n_2)I_{n_1} + A_\alpha(G_1) & (1-\alpha)R & (1-\alpha)J_{n_1 \times n_2}\\
        (1-\alpha)R^T & 2\alpha t_1I_{m_1} +(1-\alpha) A(\mathcal{L}(G_1)) & 0_{m_1 \times n_2}\\
        (1-\alpha)J_{n_2 \times n_1} & 0_{n_2 \times m_1} & \alpha n_1 I_{n_2} + A_{\alpha}(G_2)
    \end{bmatrix}.
\end{equation*}
Thus
\begin{align}\label{eq4005}
    &\quad \phi_{A_{\alpha}(G_1  \dot{\vee}_T G_2)}(\nu)\notag\\ 
    &\quad= \det \big(\nu I_{n_1+m_1+n_2}-A_{\alpha}(G_1 \dot{\vee}_T G_2) \big)\notag\\
    &\quad={\scriptsize \det \begin{bmatrix}
        (\nu-\alpha(t_1 + n_2)) I_{n_1} -A_\alpha(G_1) & -(1-\alpha)R & -(1-\alpha)J_{n_1 \times n_2}\\
        -(1-\alpha)R^T & (\nu-2\alpha t_1) I_{m_1} -(1-\alpha) A(\mathcal{L}(G_1)) & 0_{m_1 \times n_2}\\
        -(1-\alpha)J_{n_2 \times n_1} & 0_{n_2 \times m_1} & (\nu-\alpha n_1) I_{n_2} - A_{\alpha}(G_2)
            \end{bmatrix}}\notag\\
    &\quad=  \det \big( (\nu-\alpha n_1) I_{n_2} - A_{\alpha}(G_2) \big) \det S \quad (\text{using Lemma } \ref{lem1}),
\end{align}
where
{\scriptsize
\begin{align*}
    S =  \begin{bmatrix}
        (\nu-\alpha(t_1 + n_2)) I_{n_1} - A_\alpha(G_1) - (1-\alpha)^2\Upsilon_{A_{\alpha}(G_2)}(\nu-\alpha n_1) J_{n_1 \times n_1} & -(1-\alpha)R\\
        -(1-\alpha)R^T & (\nu+2-2\alpha-2\alpha t_1 ) I_{m_1} -(1-\alpha)R^TR
    \end{bmatrix},
\end{align*}
}
after simplification.
Pre-multiplying the blocks of first row of $S$ by $-R^T$, and then adding to corresponding blocks of second row of $S$, we get
\begin{align*}
   \det S =  \det \left[\begin{smallmatrix}
        (\nu-\alpha t_1 - \alpha n_2) I_{n_1} - A_\alpha(G_1) - (1-\alpha)^2\Upsilon_{A_{\alpha}(G_2)}(\nu-\alpha n_1) J_{n_1 \times n_1} & -(1-\alpha)R\\
        -R^T\big((\nu -\alpha t_1 -\alpha n_2 +1-\alpha) I_{n_1} - A_{\alpha}(G_1) -(1-\alpha)^2 \Upsilon_{A_\alpha(G_2)} (\nu-\alpha n_1) J_{n_1 \times n_1}\big) & (\nu+2-2\alpha-2\alpha t_1 ) I_{m_1}
    \end{smallmatrix}\right].
\end{align*}
Applying Lemma \ref{lem1} to it, it becomes
\begin{align}\label{eq4006}
    \det S &= \det \Big((\nu+2-2\alpha-2\alpha t_1 ) I_{m_1}\Big) \det \bigg( (\nu-\alpha t_1 - \alpha n_2) I_{n_1} - A_\alpha(G_1) - \notag\\
    &\quad (1-\alpha)^2\Upsilon_{A_{\alpha}(G_2)}(\nu-\alpha n_1) J_{n_1 \times n_1}- \frac{1-\alpha}{x+2-2\alpha-2\alpha t_1}RR^T\Big((\nu -\alpha t_1 -\alpha n_2 +1-\alpha) I_{n_1}\notag\\
    &\quad- A_{\alpha}(G_1)-(1-\alpha)^2 \Upsilon_{A_\alpha(G_2)} (\nu-\alpha n_1) J_{n_1 \times n_1}\Big)\bigg)\notag\\
    & = (\nu+2-2\alpha-2\alpha t_1 )^{m_1 -n_1} \det \bigg( (\nu +2-2\alpha-2\alpha t_1)\Big((\nu-2\alpha t_1 -\alpha n_2)I_{n_1}\notag\\
    &\quad-(1-\alpha)A(G_1)-(1-\alpha)^2\Upsilon_{A_{\alpha}(G_2)}(\nu -\alpha n_1)J_{n_1 \times n_1}\Big)-(1-\alpha)\Big(A(G_1)+t_1 I_{n_1}\Big)\notag\\
    &\quad\Big( (\nu-2\alpha t_1 -\alpha n_2 +1-\alpha)I_{n_1} -(1-\alpha)A(G_1)-(1-\alpha)^2\Upsilon_{A_\alpha(G_2)}(\nu -\alpha n_1)J_{n_1 \times n_1}\Big) \bigg)\notag\\
    &= (\nu+2-2\alpha-2\alpha t_1 )^{m_1 -n_1} \det U,
\end{align}
where $U =(\nu +2-2\alpha-2\alpha t_1)\Big((\nu-2\alpha t_1 -\alpha n_2)I_{n_1} -(1-\alpha)A(G_1)-(1-\alpha)^2\Upsilon_{A_{\alpha}(G_2)}(\nu -\alpha n_1)J_{n_1 \times n_1}\Big)-(1-\alpha)\Big(A(G_1)+t_1 I_{n_1}\Big) \Big( (\nu-2\alpha t_1 -\alpha n_2 +1-\alpha)I_{n_1} -(1-\alpha)A(G_1)-(1-\alpha)^2 \Upsilon_{A_\alpha(G_2)} (\nu -\alpha n_1)J_{n_1 \times n_1}\Big)$. $U$ is a real matrix polynomial with variables $A(G_1)$ and $J_{n_1 \times n_1}$. Thus $U(A(G_1), J_{n_1 \times n_1})$ has the eigenvalues $\zeta_1 = U(t_1, n_1)$ and $\zeta_i = U\Big(\frac{\lambda_i(A_\alpha(G_1))-\alpha t_1}{1-\alpha}, 0\Big)$ for $i=2, 3, \ldots, n_1$. Moreover $\det U = \prod_{i=1}^{n_1}\zeta_i$. Now
\begin{align*}
    \zeta_1 &= (\nu +2-2\alpha-2\alpha t_1)\Big(\nu-2\alpha t_1 -\alpha n_2 -(1-\alpha)t_1-(1-\alpha)^2\Upsilon_{A_{\alpha}(G_2)}(\nu -\alpha n_1)n_1\Big)\\
    &\quad-(1-\alpha)(t_1+t_1) \Big( \nu-2\alpha t_1 -\alpha n_2 +1-\alpha -(1-\alpha)t_1-(1-\alpha)^2 \Upsilon_{A_\alpha(G_2)} (\nu -\alpha n_1)n_1\Big)\\
    &=  \resizebox{.93\textwidth}{!}{$(\nu -t_1 -\alpha t_1 -\alpha n_2)(\nu +2-2\alpha-2 t_1) -2t_1(1-\alpha)^2 - n_1(1-\alpha)^2(\nu +2 -2\alpha -2t_1)\Upsilon_{A_\alpha(G_2)} (\nu -\alpha n_1)$},
\end{align*}
after simplification; and
\begin{align*}
    \zeta_i &= (\nu +2-2\alpha-2 \alpha t_1)\big(\nu-2\alpha t_1 -\alpha n_2 -(\lambda_i(A_\alpha(G_1))-\alpha t_1)\big) \\
    &\quad-\big(\lambda_i(A_\alpha(G_1))-\alpha t_1 +t_1(1-\alpha)\big)\big(\nu-2\alpha t_1 -\alpha n_2 +1 -\alpha -(\lambda_i(A_\alpha(G_1))-\alpha t_1)\big)\\
    &= \nu^2 + \big(2-t_1 -2\alpha -\alpha t_1 -\alpha n_2 -2\lambda_i(A_\alpha(G_1))\big)\nu -(1-\alpha)\big(t_1-2\alpha t_1 + \lambda_i(A_\alpha(G_1))\big)\\
    &\quad + \big( \alpha t_1 + \alpha n_2 + \lambda_i(A_\alpha(G_1))\big)\big(t_1 -2 +2\alpha +\lambda_i(A_\alpha(G_1)) \big).
\end{align*}
Hence
{\footnotesize
\begin{align*}
    \det U &= \Big((\nu -t_1 -\alpha t_1 -\alpha n_2)(\nu +2-2\alpha-2 t_1) -2t_1(1-\alpha)^2 - n_1(1-\alpha)^2(\nu +2 -2\alpha -2t_1)\\
    &\quad \times \Upsilon_{A_\alpha(G_2)} (\nu -\alpha n_1)\Big) \prod_{i=2}^{n_1}\Big(\nu^2 + \big(2-t_1 -2\alpha -\alpha t_1 -\alpha n_2 -2\lambda_i(A_\alpha(G_1))\big)\nu \\
    &\quad -(1-\alpha)\big(t_1-2\alpha t_1 + \lambda_i(A_\alpha(G_1))\big) + \big( \alpha t_1 + \alpha n_2 + \lambda_i(A_\alpha(G_1))\big)\big(t_1 -2 +2\alpha +\lambda_i(A_\alpha(G_1)) \big)\Big).
\end{align*}
}
Using this expression of $\det U$ in \eqref{eq4006}, and then using \eqref{eq4005}, we obtain the result as required.
\end{proof}

\begin{corollary}\label{cor10.1}
    Consider two graphs $G_1$ and $G_2$ with respective orders $n_1$ and $n_2$, and regularities $t_1$ and $t_2$. Further, the size of $G_1$ is $m_1$. Then
    \begin{enumerate}[label={\upshape (\alph*)}]
        \item When $t_1 =1$, the $A_{\alpha}$-spectrum of $G_1 \dot{\vee}_T G_2$ comprises:
        \begin{enumerate}[label= {\upshape (\roman*)}]
            \item one simple eigenvalue $\alpha (3+n_2) -1$;
            \item the eigenvalues $2\alpha + \lambda_j( A_{\alpha}(G_2) )$ for $j=2,3, \ldots, n_2$ and
            \item three eigenvalues obtained by solving
            \begin{equation*}
                \resizebox{.87 \textwidth}{!}{$(\nu-2\alpha)(\nu-2\alpha -t_2)(\nu-1-\alpha -\alpha n_2) -2n_2(1-\alpha)^2(\nu-2\alpha) -2(1-\alpha)^2(\nu-2\alpha -t_2)=0$}.
            \end{equation*}
        \end{enumerate}

        \item When $t_1 \geq 2$, the $A_{\alpha}$-spectrum of $G_1 \dot{\vee}_T G_2$ comprises:
        \begin{enumerate}[label= {\upshape (\roman*)}]
            \item one eigenvalue $2\alpha(t_1+1) -2$ with multiplicity $m_1 -n_1$;
            \item the eigenvalues $\alpha n_1 + \lambda_j(A_{\alpha}(G_2))$ for $j=2,3, \ldots, n_2$;
            \item eigenvalues obtained by solving each of the equations,
            \begin{align*}
                {\nu}^2 &-\big(\alpha(t_1 +n_2 +2) +t_1 -2 +2\lambda_i(A_{\alpha}(G_1))\big)\nu \\
                &-(1-\alpha)\big(t_1-2\alpha t_1 + \lambda_i(A_\alpha(G_1))\big)+ \big( \alpha t_1 + \alpha n_2 + \lambda_i(A_\alpha(G_1))\big)\\
                &\quad \times \big(t_1 -2 +2\alpha +\lambda_i(A_\alpha(G_1)) \big)=0 \enskip \text{for }  i= 2,3,\ldots,n_1 \enskip \text{and}
            \end{align*}
            \item three eigenvalues obtained by solving the equation,            \begin{align*}
                &(\nu-t_2 -\alpha n_1)(\nu- t_1 -\alpha t_1 -\alpha n_2)(\nu+2 -2t_1-2\alpha)\\
                &-2t_1 (1-\alpha)^2(\nu-\alpha n_1 -t_2) -n_1 n_2(1-\alpha)^2(\nu +2 -2t_1-2\alpha)=0.
            \end{align*}
        \end{enumerate}
    \end{enumerate}
\end{corollary}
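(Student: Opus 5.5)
The plan is to mirror the proof of Corollary \ref{cor1.1}, with Theorem \ref{th101} playing the role of Theorem \ref{th1}. Since $G_2$ is $t_2$-regular, $A_\alpha(G_2)$ has constant row sum $t_2$. Lemma \ref{lem99.9} then gives $\Upsilon_{A_\alpha(G_2)}(\nu-\alpha n_1)=\frac{n_2}{\nu-\alpha n_1-t_2}$. Moreover, $t_2=\lambda_1(A_\alpha(G_2))$ with eigenvector $\boldsymbol{1_{n_2}}$, so
\[
\phi_{A_\alpha(G_2)}(\nu-\alpha n_1)=(\nu-\alpha n_1-t_2)\prod_{j=2}^{n_2}\big(\nu-\alpha n_1-\lambda_j(A_\alpha(G_2))\big).
\]
Substituting both facts into \eqref{eq4001}, the factor $(\nu-\alpha n_1-t_2)$ cancels the denominator of the coronal term. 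The last bracket becomes the cubic
\[
(\nu-\alpha n_1-t_2)(\nu-t_1-\alpha t_1-\alpha n_2)(\nu+2-2\alpha-2t_1)-2t_1(1-\alpha)^2(\nu-\alpha n_1-t_2)-n_1n_2(1-\alpha)^2(\nu+2-2\alpha-2t_1).
\]
The resulting polynomial is an explicit product of linear factors, the $n_1-1$ quadratics $\zeta_i$, and this cubic, and the spectrum is read off from it.

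For part (b), $t_1\ge 2$, the graph $G_1$ has no pendant vertices, so it is not a tree and $m_1\ge n_1$. Hence $(\nu+2-2\alpha-2\alpha t_1)^{m_1-n_1}$ is a genuine polynomial factor, which gives (i). The linear factors give (ii) and the cubic gives (iv). For (iii), expanding $\zeta_i$ from Theorem \ref{th101}, the linear coefficient $2-t_1-2\alpha-\alpha t_1-\alpha n_2-2\lambda_i$ matches $-(\alpha(t_1+n_2+2)+t_1-2+2\lambda_i)$. The constant terms match as well, so the displayed quadratic is exactly $\zeta_i$.

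For part (a), $t_1=1$ forces $G_1=P_2$, so $n_1=2$ and $m_1=1$, with $\mathrm{spec}(A_\alpha(P_2))=\{1,2\alpha-1\}$. The prefactor becomes $(\nu-4\alpha)^{-1}$, so this case needs an actual cancellation. I will substitute $t_1=1$, $n_1=2$ into the cubic, which becomes
\[
(\nu-2\alpha-t_2)(\nu-1-\alpha-\alpha n_2)(\nu-2\alpha)-2(1-\alpha)^2(\nu-2\alpha-t_2)-2n_2(1-\alpha)^2(\nu-2\alpha).
\]
This matches (iii) directly. Next I evaluate the single quadratic $\zeta_2$ at $\lambda_2=2\alpha-1$, giving
\[
\nu^2+(1-\alpha-\alpha n_2-4\alpha+2)\nu-(1-\alpha)(2\alpha-1+1-2\alpha)+(\alpha+\alpha n_2+2\alpha-1)(4\alpha-2+1)\cdot\ldots
\]
and I expect it to factor as $(\nu-4\alpha)(\nu-\alpha(3+n_2)+1)$. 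The factor $(\nu-4\alpha)$ then cancels the prefactor $(\nu-4\alpha)^{-1}$, leaving the simple eigenvalue $\alpha(3+n_2)-1$ of (i). The linear factors with $n_1=2$ give the eigenvalues $2\alpha+\lambda_j(A_\alpha(G_2))$ of (ii).

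The main obstacle is confirming this factorization of $\zeta_2$ and the cancellation in case (a). If the algebra does not factor as expected, I will instead verify directly that $(\nu-4\alpha)$ divides $\zeta_2$. Concretely, I will check that $\zeta_2$ vanishes at $\nu=4\alpha$ when $\lambda_2=2\alpha-1$ and $t_1=1$. This check is needed because the total degree must equal $n_1+m_1+n_2=3+n_2$, and that forces the cancellation. After that, comparing the remaining linear factor's root with $\alpha(3+n_2)-1$ completes the proof.
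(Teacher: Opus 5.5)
Your route is the one the paper intends: it is the proof of Corollary \ref{cor1.1} transplanted to Theorem \ref{th101}. Your handling of the coronal term, of part (b), and of the cubic in part (a) is correct.

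The gap is in part (a), at exactly the step you single out as the main obstacle. With $t_1=1$, $n_1=2$, $m_1=1$, the prefactor is $(\nu+2-2\alpha-2\alpha t_1)^{m_1-n_1}=(\nu+2-4\alpha)^{-1}$, not $(\nu-4\alpha)^{-1}$. So your predicted factorization $\zeta_2=(\nu-4\alpha)(\nu-\alpha(3+n_2)+1)$ is false. Your fallback check also fails: $\zeta_2$ at $\nu=4\alpha$ equals $2(1+\alpha-\alpha n_2)$, which is nonzero in general (e.g. $\alpha=0$). Your partial expansion of $\zeta_2$ has further slips:
\begin{itemize}
\item the linear coefficient is $3-7\alpha-\alpha n_2$, not $3-5\alpha-\alpha n_2$;
\item $t_1-2+2\alpha+\lambda_2$ equals $4\alpha-2$, not $4\alpha-1$.
\end{itemize}
Also, the degree count only shows that $\nu+2-4\alpha$ divides the product of the remaining factors. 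It does not show that it divides $\zeta_2$ rather than the cubic, and that is what (a)(i) and (a)(iii) need.

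The repair is short if you do not expand $\zeta_i$. Use instead the unexpanded form from the proof of Theorem \ref{th101}, writing $\lambda_i=\lambda_i(A_\alpha(G_1))$:
\[
\zeta_i=(\nu+2-2\alpha-2\alpha t_1)(\nu-\alpha t_1-\alpha n_2-\lambda_i)-(\lambda_i+t_1-2\alpha t_1)(\nu-\alpha t_1-\alpha n_2+1-\alpha-\lambda_i).
\]
For $t_1=1$ and $\lambda_2=2\alpha-1$, the coefficient $\lambda_2+t_1-2\alpha t_1$ vanishes. Structurally, this coefficient is $(1-\alpha)(\mu_2+t_1)$, where $\mu_2=-1$ is the second adjacency eigenvalue of $P_2$, and its eigenvector $(1,-1)^T$ is annihilated by $RR^T$. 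Hence
\[
\zeta_2=(\nu+2-4\alpha)(\nu-\alpha(3+n_2)+1).
\]
The first factor cancels the prefactor exactly, and the second gives the simple eigenvalue $\alpha(3+n_2)-1$. With this correction the rest of your argument goes through. The paper's analogous step in Corollary \ref{cor1.1}(a) relies on the same cancellation without writing it out.
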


\begin{corollary}\label{cor10.2}
    Consider a graph $G_1$ of order $n_1$, size $m_1$ and regularity $t_1$. Let $G_2=K_{a,b}$ be the complete bipartite graph of order $n_2 = a+b$. Then
   \begin{enumerate}[label={\upshape (\alph*)}]
        \item When $t_1 =1$, the $A_{\alpha}$-spectrum of $G_1 \dot{\vee}_T G_2$ comprises:
        \begin{enumerate}[label= {\upshape (\roman*)}]
            \item one simple eigenvalue $\alpha (3+n_2) -1$;
            \item one eigenvalue $\alpha (2+a)$ with multiplicity $b-1$;
            \item one eigenvalue $\alpha (2+b)$ with multiplicity $a-1$ and
            \item four eigenvalues obtained by solving the equation,
            \begin{align*}
                \big( {\nu}^2 &-(3\alpha +\alpha n_2+1)\nu + 2{\alpha}^2n_2 +6\alpha -2 \big) \big( {\nu}^2 -(4\alpha +\alpha n_2)\nu + 4{\alpha}^2 \\
                &+ 2{\alpha}^2n_2+2\alpha ab -ab \big)-(1-\alpha)^2(\nu-2\alpha) \big( (\nu-2\alpha)n_2 -\alpha {n_2}^2 + 2ab \big)=0.
            \end{align*}
        \end{enumerate}

        \item When $t_1 \geq 2$, the $A_{\alpha}$-spectrum of $G_1 \dot{\vee}_T G_2$ comprises:
        \begin{enumerate}[label= {\upshape (\roman*)}]
            \item one eigenvalue $2\alpha(t_1+2) -2$ with multiplicity $m_1 -n_1$;
            \item one eigenvalue $\alpha (n_1 +a)$ with multiplicity $b-1$;
            \item one eigenvalue $\alpha (n_1 +b)$ with multiplicity $a-1$;
            \item eigenvalues obtained by solving the equation,
            \begin{align*}
                {\nu}^2 &-\big( \alpha(t_1 +n_2 +2)+t_1 -2 +2\lambda_i(A_{\alpha}(G_1))\big)\nu \\
                &-(1-\alpha)\big(t_1-2\alpha t_1 + \lambda_i(A_\alpha(G_1))\big) + \big( \alpha t_1 + \alpha n_2 + \lambda_i(A_\alpha(G_1))\big)\\
                &\times \big(t_1 -2 +2\alpha +\lambda_i(A_\alpha(G_1)) \big)=0 \enskip \text{for } i=2,3,\ldots,n_1 \enskip \text{and}
            \end{align*}
            \item four eigenvalues obtained by solving the equation,
            \begin{align*}
                &\big( {\nu}^2 -(\alpha t_1 + \alpha n_2 +3t_1 -2 +2\alpha)\nu + (2\alpha^2 n_2 +2\alpha t_1^2 + 2\alpha t_1 n_2 +4\alpha t_1 \\
                &-2\alpha n_2 +2t_1^2 -4t_1)   \big) \big( {\nu}^2 -(2\alpha n_1 + \alpha n_2)\nu +( {\alpha}^2n_1^2 + {\alpha}^2n_1n_2 +2\alpha ab -ab)  \big)\\
                &-n_1(1-\alpha)^2(\nu+2-2t_1-2\alpha)\big((\nu-\alpha n_1)n_2 -\alpha n_2^2 +2ab \big)=0.
            \end{align*}
        \end{enumerate}
    \end{enumerate}
\end{corollary}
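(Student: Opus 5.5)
The plan mirrors the proof of Corollary \ref{cor1.2}, with Theorem \ref{th101} replacing Theorem \ref{th1}. By Lemmas \ref{lem4} and \ref{lem6}, the spectrum of $A_\alpha(K_{a,b})$ is
\[
\Big\{\tfrac{\alpha n_2\pm\sqrt{\alpha^2 n_2^2+4ab(1-2\alpha)}}{2},\ [\alpha a]^{b-1},\ [\alpha b]^{a-1}\Big\},
\]
with $n_2=a+b$, and its coronal is $\Upsilon_{A_\alpha(K_{a,b})}(\nu)=\frac{n_2\nu-\alpha n_2^2+2ab}{\nu^2-\alpha n_2\nu+(2\alpha-1)ab}$. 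We substitute both into \eqref{eq4001}.

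First we factor $\phi_{A_\alpha(G_2)}(\nu-\alpha n_1)$ as
\[
(\nu-\alpha n_1-\alpha a)^{b-1}(\nu-\alpha n_1-\alpha b)^{a-1}\,q(\nu),
\]
where $q(\nu)=(\nu-\alpha n_1)^2-\alpha n_2(\nu-\alpha n_1)+(2\alpha-1)ab$. Its roots are exactly $\alpha n_1$ plus the two extreme eigenvalues. This $q$ is also the denominator of $\Upsilon_{A_\alpha(G_2)}(\nu-\alpha n_1)$. Multiplying the last bracket of \eqref{eq4001} by $q(\nu)$ removes the pole. Expanding $q$ gives $\nu^2-\alpha(2n_1+n_2)\nu+\alpha^2n_1^2+\alpha^2n_1n_2+2\alpha ab-ab$. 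The first factor is the expansion of
\[
(\nu-t_1-\alpha t_1-\alpha n_2)(\nu+2-2\alpha-2t_1)-2t_1(1-\alpha)^2.
\]
We check directly that the $\nu$-coefficient is $-(\alpha t_1+\alpha n_2+3t_1-2+2\alpha)$. The constant term is $(t_1+\alpha t_1+\alpha n_2)(2t_1+2\alpha-2)-2t_1(1-2\alpha+\alpha^2)$, which expands to the stated expression. The remaining term is $-n_1(1-\alpha)^2(\nu+2-2\alpha-2t_1)\big((\nu-\alpha n_1)n_2-\alpha n_2^2+2ab\big)$. Together these give the quartic in (b)(v), and the product over $i=2,\dots,n_1$ gives (b)(iv) unchanged. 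If $t_1\ge2$, then $G_1$ has no pendant vertices, so it is not a tree and $m_1\ge n_1$. The factor $(\nu+2-2\alpha-2\alpha t_1)^{m_1-n_1}$ is then a genuine polynomial factor, and it gives (b)(i).

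For part (a), $t_1=1$ forces $G_1=P_2$ with $n_1=2$ and $m_1=1$, and $\operatorname{spec}A_\alpha(P_2)=\{1,2\alpha-1\}$. The factor $(\nu+2-2\alpha-2\alpha)^{-1}$ now has a negative exponent. We cancel it against the quadratic factor for $i=2$, $\lambda_2=2\alpha-1$, or against part of the quartic, and the leftover root should be $\alpha(3+n_2)-1$. We verify this by evaluating the $i=2$ quadratic with $t_1=1$, $\lambda=2\alpha-1$: it becomes $\nu^2-(1+5\alpha+\alpha n_2-2)\nu+\dots$, and we check that it factors as $(\nu+2-4\alpha)(\nu-\alpha(3+n_2)+1)$. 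The other eigenvalues are $\alpha n_1+\alpha a=\alpha(2+a)$ with multiplicity $b-1$ and $\alpha(2+b)$ with multiplicity $a-1$. The quartic in (a)(iv) comes from the quartic of (b)(v) after substituting $t_1=1$, $n_1=2$.

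The main obstacle is checking the coefficient bookkeeping against the printed formulas. In the statement, (b)(i) says $2\alpha(t_1+2)-2$, whereas \eqref{eq4001} gives $2\alpha(t_1+1)-2$. Also the (a)(iv) quartic has the factor $(1-\alpha)^2$ rather than $2(1-\alpha)^2$, and its first quadratic constant is $2\alpha^2n_2+6\alpha-2$. We will follow the theorem and reconcile each such expression by direct expansion, flagging any apparent typographical discrepancy.
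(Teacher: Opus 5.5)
Your proposal is correct and follows the route the paper intends. The paper gives no separate proof here, but the argument is that of Corollary~\ref{cor1.2} with Theorem~\ref{th101} in place of Theorem~\ref{th1}: substitute Lemmas~\ref{lem4} and~\ref{lem6}, absorb the pole of the coronal into $q(\nu)$, and for $t_1=1$ cancel $(\nu+2-4\alpha)^{-1}$ against the $i=2$ factor $(\nu+2-4\alpha)\big(\nu-\alpha(3+n_2)+1\big)$.

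Both discrepancies you flag are genuine misprints in the statement. Part (b)(i) should read $2\alpha(t_1+1)-2$, and the last term of (a)(iv) should carry $2(1-\alpha)^2$ because $n_1=2$. The constant $2\alpha^2n_2+6\alpha-2$, by contrast, is correct. The one slip on your side is the intermediate $\nu$-coefficient of the $i=2$ quadratic: it is $-(7\alpha+\alpha n_2-3)$, not $-(5\alpha+\alpha n_2-1)$. The factorization you state is nonetheless right.
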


\begin{corollary}\label{cor10.3}
    \begin{enumerate}[label={\upshape (\alph*)}]
        \item Consider two regular graphs $G_1$ and $G_2$, and an arbitrary graph $H$. Then $A_{\alpha}$-cospectralness of $G_1$ and $G_2$ implies that $G_1\dot{\vee}_T H$ and $G_2 \dot{\vee}_T H$ are $A_\alpha$-cospectral.
        \item Consider a regular graph $G$. Suppose $H_1$ and $H_2$ are two graphs satisfying the condition $\Upsilon_{A_{\alpha}(H_1)}(\nu) = \Upsilon_{A_{\alpha}(H_2)}(\nu)$. Then $A_{\alpha}$-cospectralness of $H_1$ and $H_2$ implies that $G\dot{\vee}_T H_1$ and $G \dot{\vee}_T H_2$ are $A_\alpha$-cospectral.
    \end{enumerate}
\end{corollary}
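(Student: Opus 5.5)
The plan is to read everything off the closed form of Theorem \ref{th101}, in the same way Corollary \ref{cor1.3} was read off Theorem \ref{th1}. The point is that, for $t_1$-regular $G_1$, the right-hand side of \eqref{eq4001} depends on the factor graphs only through a few pieces of data. For $G_1$ these are $n_1$, $m_1$, $t_1$ and the multiset $\{\lambda_i(A_\alpha(G_1))\}_{i\ge 2}$. For $G_2$ these are $n_2$, the polynomial $\phi_{A_\alpha(G_2)}$ and the rational function $\Upsilon_{A_\alpha(G_2)}$. Each part then reduces to checking that the relevant data coincide.

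\emph{Part (a).} First I will show that two $A_\alpha$-cospectral regular graphs $G_1$ and $G_2$ share order, size and regularity. The order equals the number of eigenvalues. For a $t$-regular graph, $A_\alpha = \alpha t I + (1-\alpha)A$ has constant row sum $t$, and by Perron--Frobenius this is its largest eigenvalue. So $t = \lambda_1(A_\alpha)$ is a spectral invariant, and hence so is $m = nt/2$. (The edge case $\alpha=1$ also works, since then $\lambda_1 = t$ directly.) Consequently the factors indexed by $i=1$ in the two spectra are both equal to $t$. Removing them leaves identical multisets $\{\lambda_i\}_{i\ge2}$. With $H$ fixed, every factor in \eqref{eq4001} coincides, so $\phi_{A_\alpha(G_1\dot{\vee}_T H)} = \phi_{A_\alpha(G_2\dot{\vee}_T H)}$.

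\emph{Part (b).} Here $G$ is fixed, so all the $G$-data in \eqref{eq4001} agree for the two joins. Cospectrality of $H_1$ and $H_2$ gives equal orders $n_2$ and equal $\phi_{A_\alpha(H_j)}(\nu-\alpha n_1)$. The hypothesis supplies equal coronals $\Upsilon_{A_\alpha(H_j)}(\nu-\alpha n_1)$. Substituting into \eqref{eq4001} gives identical characteristic polynomials.

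One subtlety needs care. The formula in Theorem \ref{th101} was derived through Schur complements, which required certain matrices to be invertible. It therefore holds as an identity only for $\nu$ outside a finite set. Both sides are polynomials, or rational functions that agree with polynomials, so equality on a cofinite set extends to all $\nu$. This step is minor, and the main (mild) obstacle is really just the regularity invariant used in (a).

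Finally, the statement says "cospectral", which by the paper's definition requires the graphs to be non-isomorphic. I will note this as implicit in the intended application, following the paper's convention, rather than prove it.
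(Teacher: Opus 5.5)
Your proposal is correct and follows the paper's route: the paper proves this (via the identical argument given for Corollary \ref{cor1.3}) by noting that $A_\alpha$-cospectral regular graphs share order, size and regularity, and then comparing the characteristic polynomials given by Theorem \ref{th101}. Your extra details fill gaps the paper leaves implicit: the justification that $t=\lambda_1(A_\alpha)$ is a spectral invariant, and the extension of the Schur-complement identity from a cofinite set of $\nu$. Like you, the paper does not address the non-isomorphism requirement.
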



\section{$A_{\alpha}$-characteristic polynomial of $T$-edge join} \label{sec7}

\begin{theorem}\label{th102}
    Consider two graphs $G_1$ and $G_2$ with respective orders $n_1$ and $n_2$. Let $m_1$ be the size and $t_1$ be the vertex regularity of $G_1$. Then
    \begin{align} \label{eq4007}
        \resizebox{.16\textwidth}{!}{$\phi_{A_{\alpha}(G_1 \underline{\vee}_T G_2)}(\nu)$} &\resizebox{.56\textwidth}{!}{$=(\nu +2 -2\alpha -2\alpha t_1 -\alpha n_2)^{m_1 -n_1} \phi_{A_{\alpha}(G_2)}(\nu- \alpha m_1)$}\notag\\ 
        &\quad \prod_{i=2}^{n_1}\resizebox{.75\textwidth}{!}{$\Big({\nu}^2 +\big(2-t_1-2\alpha -\alpha t_1 -\alpha n_2 -2\lambda_i(A_\alpha(G_1))\big)\nu -t_1(1-\alpha)(1-3\alpha)$}\notag\\
        &\quad \resizebox{.6\textwidth}{!}{$
        +\big(\alpha t_1 + \lambda_i(A_{\alpha}(G_1))\big)\big(t_1 -3+3\alpha + \alpha n_2 + \lambda_i(A_{\alpha}(G_1))\big) \Big)$}\notag\\
        &\quad \times \resizebox{.58\textwidth}{!}{$\Big( (\nu-t_1-\alpha t_1)(\nu+2-2\alpha -2t_1-\alpha n_2) -2t_1(1-\alpha)^2$}\notag\\
        &\quad \resizebox{.51\textwidth}{!}{$-\frac{1}{2}t_1 n_1(1-\alpha)^2(\nu-t_1-\alpha t_1) \Upsilon_{A_{\alpha}(G_2)}(\nu-\alpha m_1) \Big)$}.
    \end{align}
\end{theorem}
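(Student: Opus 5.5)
I will follow the strategy of Theorem \ref{th101}, combining it with the $Q$-edge join block structure of Theorem \ref{th2}. In $G_1 \underline{\vee}_T G_2$ the vertices of $G_1$ have degree $2t_1$, the inserted vertices have degree $2t_1+n_2$, and the vertices of $G_2$ have degree $d_{G_2}+m_1$. Ordering the vertices as $V(G_1)\cup I(G_1)\cup V(G_2)$, I therefore get
\[
A_\alpha(G_1 \underline{\vee}_T G_2)=\begin{bmatrix} \alpha t_1 I_{n_1}+A_\alpha(G_1) & (1-\alpha)R & 0_{n_1\times n_2}\\ (1-\alpha)R^T & \alpha(2t_1+n_2)I_{m_1}+(1-\alpha)A(\mathcal{L}(G_1)) & (1-\alpha)J_{m_1\times n_2}\\ 0_{n_2\times n_1} & (1-\alpha)J_{n_2\times m_1} & \alpha m_1 I_{n_2}+A_\alpha(G_2)\end{bmatrix}.
\]
Applying Lemma \ref{lem1} with respect to the $G_2$ block produces the factor $\phi_{A_\alpha(G_2)}(\nu-\alpha m_1)$. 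It leaves a $2\times 2$ block matrix $S$ whose $(2,2)$ block, after using \eqref{eq3}, is
\[
(\nu+2-2\alpha-2\alpha t_1-\alpha n_2)I_{m_1}-(1-\alpha)R^TR-(1-\alpha)^2\Upsilon J_{m_1\times m_1},
\]
where $\Upsilon=\Upsilon_{A_\alpha(G_2)}(\nu-\alpha m_1)$.

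The next step is to eliminate the awkward $J_{m_1\times m_1}$ term. I will use the identity $J_{m_1\times m_1}=\frac{1}{4}R^TJ_{n_1\times n_1}R$, which holds because each column of $R$ has exactly two ones. This lets me write the $(2,2)$ block as $\beta I_{m_1}-R^TMR$, where
\[
\beta=\nu+2-2\alpha-2\alpha t_1-\alpha n_2,\qquad M=(1-\alpha)I_{n_1}+\tfrac14(1-\alpha)^2\Upsilon J_{n_1\times n_1}.
\]
Next I perform the block row operation of Theorem \ref{th101}: pre-multiply the first block row by a suitable multiple of $R^T$ and add it to the second. This kills the $R^TR$-type terms and turns the $(2,2)$ block into $\beta I_{m_1}$. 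The $(1,2)$ block stays $-(1-\alpha)R$.

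Taking the Schur complement with respect to $\beta I_{m_1}$ then gives $\beta^{m_1}\det(\cdot)$ of an $n_1\times n_1$ matrix. Multiplying through by $\beta$ gives $\beta^{m_1-n_1}\det U$, where $U$ is a polynomial in $A(G_1)$ and $J_{n_1\times n_1}$; here I use $RR^T=A(G_1)+t_1I_{n_1}$ from \eqref{eq4}. Because $G_1$ is regular, $A(G_1)$ and $J$ commute and share the eigenvector $\boldsymbol{1_{n_1}}$. So $\det U=\prod_i\zeta_i$ with:
\begin{itemize}
\item $\zeta_1=U(t_1,n_1)$;
\item $\zeta_i=U\big(\frac{\lambda_i(A_\alpha(G_1))-\alpha t_1}{1-\alpha},0\big)$ for $i\ge 2$.
\end{itemize}
The factors for $i\ge2$ do not involve $\Upsilon$, since $J$ annihilates the other eigenvectors. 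Expanding them should give the stated quadratics in $\nu$.

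For $\zeta_1$, I substitute $RR^T\mapsto 2t_1$ and $J\mapsto n_1$. The $\Upsilon$ term then carries the coefficient $\frac14\cdot 2t_1\cdot 2t_1\cdot n_1/t_1$-type factors, which should reduce to $\frac12 t_1 n_1=m_1$ times $(1-\alpha)^2(\nu-t_1-\alpha t_1)$, matching the last factor. A consistency check is that setting $\Upsilon=0$ must recover the $i=1$ analogue of the generic quadratic with $\lambda_1=t_1$.

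The main obstacle I expect is the algebra in the row reduction: the $J$-term now sits in the second row rather than the first, unlike in Theorem \ref{th101}. I plan to handle it by absorbing the $J$-term into $R^TMR$ as above, so that the same elimination works verbatim with $A_\alpha(G_1)$ replaced by a matrix involving $M$. If that bookkeeping turns out messy, my fallback is to first apply Lemma \ref{lem3} to peel off the $J$-term as a scalar factor $1-(1-\alpha)^2\Upsilon\,\Upsilon_{(\cdot)}(\cdot)$ and evaluate that coronal via Lemma \ref{lem99.9} using regularity, exactly as for $y$ in Theorem \ref{th2}.
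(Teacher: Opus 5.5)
Your proposal is correct and is essentially the paper's proof. The paper reaches the $(2,2)$ block $\beta I_{m_1}$ by two successive block row operations: first with multiplier $-R^T$, then, after rewriting $J_{m_1\times m_1}=\tfrac12 J_{m_1\times n_1}R$, with multiplier $-\tfrac12(1-\alpha)\Upsilon J_{m_1\times n_1}$. Their composite is exactly your single multiplier $-R^T\bigl(I_{n_1}+\tfrac14(1-\alpha)\Upsilon J_{n_1\times n_1}\bigr)$, and the paper then evaluates the resulting matrix polynomial in $A(G_1)$ and $J_{n_1\times n_1}$ on common eigenvectors just as you plan.

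One small correction: in $\zeta_1$ the $\Upsilon$-coefficient is $\tfrac14\cdot 2t_1\cdot n_1=\tfrac12 t_1 n_1$, coming from $RR^T\mapsto 2t_1$ and $J_{n_1\times n_1}\mapsto n_1$. The product you wrote evaluates to $t_1 n_1$ instead.
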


\begin{proof}
We have
\begin{equation}\label{eq4008}
    A(G_1 \underline{\vee}_T G_2) =
    \begin{bmatrix}
        A(G_1) & R & 0_{n_1 \times n_2}\\
        R^T & A(\mathcal{L}(G_1)) & J_{m_1 \times n_2}\\
        0_{n_2 \times n_1} & J_{n_2 \times m_1} & A(G_2)
    \end{bmatrix}
\end{equation}
and
\begin{equation}\label{eq4009}
    D(G_1 \underline{\vee}_T G_2) =
    \begin{bmatrix}
        2t_1 I_{n_1} & 0_{n_1 \times m_1} & 0_{n_1 \times n_2}\\
        0_{m_1 \times n_1} & (2t_1+n_2) I_{m_1} & 0_{m_1 \times n_2}\\
        0_{n_2 \times n_1} & 0_{n_2 \times m_1} & D(G_2) + m_1I_{n_2}
    \end{bmatrix}.
\end{equation}
Using \eqref{eq4008} and \eqref{eq4009}, we get
\small
\begin{equation*}\label{eq4010}
    A_{\alpha}(G_1 \underline{\vee}_T G_2)=
    \begin{bmatrix}
        \alpha t_1 I_{n_1} + A_\alpha(G_1) & (1-\alpha)R & 0_{n_1 \times n_2}\\
        (1-\alpha)R^T & \alpha (2t_1 +n_2) I_{m_1} +(1-\alpha) A(\mathcal{L}(G_1)) & (1-\alpha)J_{m_1 \times n_2}\\
        0_{n_2 \times n_1} & (1-\alpha) J_{n_2 \times m_1} & \alpha m_1 I_{n_2} + A_{\alpha}(G_2)
    \end{bmatrix}.
\end{equation*}
Applying Lemma \ref{lem1} on $A_{\alpha}(G_1 \underline{\vee}_T G_2)$ similarly as we did in the previous sections, we get
\begin{align}\label{eq4011}
    \phi_{A_{\alpha}(G_1  \underline{\vee}_T G_2)}(\nu) =\det \big( (\nu-\alpha m_1) I_{n_2} - A_{\alpha}(G_2) \big)\times \det S,
\end{align}
where
{\scriptsize
\begin{align*}
    S = \begin{bmatrix}
        (\nu-\alpha t_1) I_{n_1} - A_\alpha(G_1)  & -(1-\alpha)R\\
        -(1-\alpha)R^T & (\nu+2-2\alpha-2\alpha t_1 -\alpha n_2) I_{m_1} -(1-\alpha)R^TR - (1-\alpha)^2\Upsilon_{A_{\alpha}(G_2)}(\nu-\alpha m_1) J_{m_1 \times m_1}
    \end{bmatrix}.
\end{align*}
}
Pre-multiplying the blocks of first row of $S$ by $-R^T$, then adding to corresponding blocks of second row of $S$, and after that using $2J_{m_1 \times m_1} = J_{m_1 \times n_1}R$, we get
\begin{align*}
    \det S = \det \resizebox{5.4in}{.17in}{$\begin{bmatrix}
        (\nu-\alpha t_1) I_{n_1} - A_\alpha(G_1)  & -(1-\alpha)R\\
        -R^T\big((x-\alpha t_1 +1-\alpha)I_{n_1} -A_{\alpha}(G_1)\big) & (\nu+2-2\alpha-2\alpha t_1 -\alpha n_2) I_{m_1} - \frac{1}{2}(1-\alpha)^2\Upsilon_{A_{\alpha}(G_2)}(\nu-\alpha m_1) J_{m_1 \times n_1}R
    \end{bmatrix}$}.
\end{align*}
Again pre-multiplying the blocks of first row of $S$ by $-\frac{1}{2}(1-\alpha)\Upsilon_{A_{\alpha}(G_2)}(\nu-\alpha m_1)J_{m_1 \times n_1}$, then adding to corresponding blocks of second row of $S$, we get
\begin{align*}
   \det S =  \det \resizebox{5.4in}{.17in}{$\begin{bmatrix}
        (\nu-\alpha t_1) I_{n_1} - A_\alpha(G_1) & -(1-\alpha)R\\
        -R^T\big((\nu -\alpha t_1+1-\alpha) I_{n_1} - A_{\alpha}(G_1)\big) -\frac{(1-\alpha)}{2} \Upsilon_{A_\alpha(G_2)} (\nu-\alpha m_1) J_{m_1 \times n_1}\big((\nu -\alpha t_1)I_{n_1} - A_{\alpha}(G_1)\big) & (\nu+2-2\alpha-2\alpha t_1 -\alpha n_2 ) I_{m_1}
    \end{bmatrix}$}.
\end{align*}
Applying Lemma \ref{lem1} again, and then after simplification, it becomes
\begin{align*}
    \det S & = (\nu+2-2\alpha-2\alpha t_1 -\alpha n_2)^{m_1 -n_1}\times \det \Big( (\nu +2-2\alpha-2\alpha t_1 - \alpha n_2)\\
    &\quad \big((\nu-\alpha t_1)I_{n_1}-A_{\alpha}(G_1)\big)- (1-\alpha)RR^T \big( (\nu -\alpha t_1 +1-\alpha)I_{n_1} -A_{\alpha}(G_1)\big)\\
    &\quad-\frac{1}{2}(1-\alpha)^2\Upsilon_{A_{\alpha}(G_2)}(\nu -\alpha m_1)RJ_{m_1 \times n_1}\big((\nu -\alpha t_1)I_{n_1} -A_\alpha(G_1)\big)\Big).
\end{align*}
Since $G_1$ is $t_1$-regular graph, $RR^T =A(G_1) + t_1 I_{n_1}$ and $RJ_{m_1 \times n_1} = t_1 J_{n_1 \times n_1}$ hold. Using them, we finally get 
\begin{align}\label{eq4012}
    \det S = (\nu +2-2\alpha-2\alpha t_1-\alpha n_2)^{m_1-n_1} \times \det V,
\end{align}
where $V =(\nu +2-2\alpha-2\alpha t_1 -\alpha n_2)\big((\nu-\alpha t_1)I_{n_1} - \alpha t_1 I_{n_1}-(1-\alpha)A(G_1)\big)-(1-\alpha)\big(A(G_1)+t_1 I_{n_1}\big) \big( (\nu-\alpha t_1 +1-\alpha)I_{n_1} -\alpha t_1 I_{n_1}-(1-\alpha)A(G_1)\big) -\frac{1}{2}(1-\alpha)^2\Upsilon_{A_{\alpha}(G_2)}(\nu-\alpha m_1) t_1 J_{n_1 \times n_1} \big((\nu -\alpha t_1)I_{n_1} -\alpha t_1 I_{n_1} -(1-\alpha)A(G_1)\big)$. Similar to as we have seen in the proof of Theorem \ref{th101}, here $V$ is a real matrix polynomial with variables $A(G_1)$ and $J_{n_1 \times n_1}$, and $V(A(G_1), J_{n_1 \times n_1})$ has the eigenvalues $\zeta_1 = V(t_1, n_1)$ and $\zeta_i = V\Big(\frac{\lambda_i(A_\alpha(G_1))-\alpha t_1}{1-\alpha}, 0\Big)$ for $i=2, 3, \ldots, n_1$, with $\det U = \prod_{i=1}^{n_1}\zeta_i$. Simplifying $\zeta_1$ and $\zeta_i$ after making these substitutions, we get
\begin{align*}
    \zeta_1 &= \resizebox{.94\textwidth}{!}{$(\nu -t_1 -\alpha t_1)(\nu +2-2\alpha-2 t_1-\alpha n_2) -2t_1(1-\alpha)^2 - \frac{1}{2}t_1 n_1(1-\alpha)^2(\nu -\alpha t_1 -t_1)\Upsilon_{A_\alpha(G_2)} (\nu -\alpha m_1)$}
\end{align*} and
\begin{align*}
    \zeta_i &= \nu^2 + \big(2-t_1 -2\alpha -\alpha t_1 -\alpha n_2 -2\lambda_i(A_\alpha(G_1))\big)\nu \\
    &\quad -t_1(1-\alpha)(1-3\alpha) + \big( \alpha t_1 + \lambda_i(A_\alpha(G_1))\big)\big(t_1 -3 +3\alpha +\alpha n_2 +\lambda_i(A_\alpha(G_1)) \big).
\end{align*}
Hence
{\footnotesize
\begin{align*}
    \det V &= \Big((\nu -t_1 -\alpha t_1)(\nu +2-2\alpha-2 t_1-\alpha n_2) - \frac{1}{2}t_1 n_1(1-\alpha)^2(\nu -\alpha t_1 -t_1)\Upsilon_{A_\alpha(G_2)} (\nu -\alpha m_1)\\
    &\quad -2t_1(1-\alpha)^2\Big) \times\prod_{i=2}^{n_1}\Big(\nu^2 + \big(2-t_1 -2\alpha -\alpha t_1 -\alpha n_2 -2\lambda_i(A_\alpha(G_1))\big)\nu  -t_1(1-\alpha)(1-3\alpha) \\
    &\quad + \big( \alpha t_1 + \lambda_i(A_\alpha(G_1))\big)\big(t_1 -3 +3\alpha +\alpha n_2 +\lambda_i(A_\alpha(G_1)) \big)\Big).
\end{align*}
}
Using this expression of $\det V$ in \eqref{eq4012}, and then using \eqref{eq4011}, we obtain the result as required.
\end{proof}

\begin{corollary}\label{cor11.1}
    Consider two graphs $G_1$ and $G_2$ with respective orders $n_1$ and $n_2$, and regularities $t_1$ and $t_2$. Further, the size of $G_1$ is $m_1$. Then
    \begin{enumerate}[label={\upshape (\alph*)}]
        \item When $t_1 =1$, the $A_{\alpha}$-spectrum of $G_1 \underline{\vee}_T G_2$ comprises:
        \begin{enumerate}[label= {\upshape (\roman*)}]
            \item one simple eigenvalue $3\alpha -1$;
            \item the eigenvalues $\alpha + \lambda_j( A_{\alpha}(G_2) )$ for $j=2,3, \ldots, n_2$ and
            \item three eigenvalues obtained by solving
            \begin{equation*}
                \resizebox{.87 \textwidth}{!}{$(\nu-\alpha -1)(\nu-\alpha -t_2)(\nu-2\alpha -\alpha n_2) -n_2(1-\alpha)^2(\nu-\alpha -1) -2(1-\alpha)^2(\nu-\alpha -t_2)=0$}.
            \end{equation*}
        \end{enumerate}

        \item When $t_1 \geq 2$, the $A_{\alpha}$-spectrum of $G_1 \underline{\vee}_T G_2$ comprises:
        \begin{enumerate}[label= {\upshape (\roman*)}]
            \item one eigenvalue $\alpha(2t_1+2+n_2) -2$ with multiplicity $m_1 -n_1$;
            \item the eigenvalues $\alpha m_1 + \lambda_j(A_{\alpha}(G_2))$ for $j=2,3, \ldots, n_2$;
            \item eigenvalues obtained by solving each of the equations,
            \begin{align*}
                \resizebox{.025\textwidth}{!}{${\nu}^2$} &\resizebox{.85\textwidth}{!}{$-\big(\alpha(t_1 +n_2 +2) +t_1 -2 +2\lambda_i(A_{\alpha}(G_1))\big)\nu -t_1(1-\alpha)(1-3\alpha) + \big( \alpha t_1 + \lambda_i(A_\alpha(G_1))\big)$}\\
                & \resizebox{.65\textwidth}{!}{$\times\big(t_1 -3 +3\alpha +\alpha n_2 +\lambda_i(A_\alpha(G_1)) \big)=0 \enskip \text{for }  i= 2,3,\ldots,n_1 \enskip \text{and}$}
            \end{align*}
            \item three eigenvalues obtained by solving the equation,       
            \begin{align*}
                &(\nu-t_2 -\alpha m_1)(\nu- t_1 -\alpha t_1)(\nu+2 -2t_1-2\alpha -\alpha n_2)\\
                &-\frac{1}{2}t_1 n_1 n_2 (1-\alpha)^2(\nu-\alpha t_1 -t_1) -2t_1(1-\alpha)^2(\nu -t_2-\alpha m_1)=0.
            \end{align*}
        \end{enumerate}
    \end{enumerate}
\end{corollary}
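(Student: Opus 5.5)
We specialize Theorem \ref{th102}, in the same way that Corollary \ref{cor1.1} was obtained from Theorem \ref{th1}. Since $G_2$ is $t_2$-regular, $A_{\alpha}(G_2)$ has constant row sum $t_2$. Lemma \ref{lem99.9} then gives $\Upsilon_{A_{\alpha}(G_2)}(\nu-\alpha m_1)=\frac{n_2}{\nu-\alpha m_1-t_2}$. Moreover $t_2$ is an eigenvalue of $A_{\alpha}(G_2)=\alpha t_2I_{n_2}+(1-\alpha)A(G_2)$, so we may take $\lambda_1(A_{\alpha}(G_2))=t_2$ and factor
\[
\phi_{A_{\alpha}(G_2)}(\nu-\alpha m_1)=(\nu-\alpha m_1-t_2)\prod_{j=2}^{n_2}\big(\nu-\alpha m_1-\lambda_j(A_{\alpha}(G_2))\big).
\]

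Next, the factor $(\nu-\alpha m_1-t_2)$ is absorbed into the last bracket of \eqref{eq4007}. This clears the denominator of the coronal term and produces the cubic
\[
(\nu-\alpha m_1-t_2)\Big((\nu-t_1-\alpha t_1)(\nu+2-2\alpha-2t_1-\alpha n_2)-2t_1(1-\alpha)^2\Big)-\tfrac12 t_1n_1n_2(1-\alpha)^2(\nu-t_1-\alpha t_1).
\]
The result is a polynomial expression for $\phi_{A_{\alpha}(G_1\underline{\vee}_T G_2)}(\nu)$ in the style of \eqref{eq28}. It has four kinds of factors:
\begin{itemize}
\item the power $(\nu+2-2\alpha-2\alpha t_1-\alpha n_2)^{m_1-n_1}$;
\item the linear factors $\nu-\alpha m_1-\lambda_j(A_{\alpha}(G_2))$ for $j\ge 2$;
\item the $n_1-1$ quadratics indexed by $\lambda_i(A_{\alpha}(G_1))$ for $i\ge2$, where the linear coefficient $2-t_1-2\alpha-\alpha t_1-\alpha n_2-2\lambda_i$ is rewritten as $-(\alpha(t_1+n_2+2)+t_1-2+2\lambda_i)$;
\item the cubic above.
\end{itemize}
Part (b)(iv) is this cubic.

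For case (b), $t_1\ge2$, the graph $G_1$ has no pendant vertex, so it is not a tree and $m_1\ge n_1$. The power factor then contributes the eigenvalue $\alpha(2t_1+2+n_2)-2$ with multiplicity $m_1-n_1$, and the remaining factors give items (ii)–(iv) directly.

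For case (a), $t_1=1$, we have $G_1=P_2$ with $n_1=2$, $m_1=1$, and $\mathrm{spec}(A_{\alpha}(P_2))=\{1,2\alpha-1\}$. The power factor then carries exponent $m_1-n_1=-1$, so the factor $(\nu+2-3\alpha-\alpha n_2)^{-1}$ must cancel against the rest; this is the main obstacle. Substituting $\lambda_2=2\alpha-1$ into the single quadratic gives $\nu^2-(5\alpha-1+\alpha n_2)\nu+(3\alpha-1)(2\alpha+\alpha n_2)$ after simplification. This should factor as $(\nu-(3\alpha-1))(\nu-2\alpha-\alpha n_2)$, which checks out. Hence $3\alpha-1$ is the simple eigenvalue in (i).

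The remaining factor $(\nu-2\alpha-\alpha n_2)$ cancels with the negative power, since $\nu+2-3\alpha-\alpha n_2$ differs from it. It therefore has to be reconciled with the cubic: I would specialize the cubic at $t_1=1,n_1=2,m_1=1$ and simplify. I expect the cubic's factor $(\nu+2-2\alpha-2-\alpha n_2)=(\nu-2\alpha-\alpha n_2)$ to combine with the extra $(\nu-2\alpha-\alpha n_2)$ and the negative power so as to reproduce the stated cubic in (a)(iii). If the bookkeeping fails there, the fallback is to compute the $t_1=1$ case directly from $\det S$, without dividing by $(\nu+2-2\alpha-2\alpha t_1-\alpha n_2)^{n_1}$. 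Finally, the factors $\nu-\alpha-\lambda_j(A_{\alpha}(G_2))$ give (a)(ii).
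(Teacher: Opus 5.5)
Part (b) is correct and follows the paper's route. You substitute $\Upsilon_{A_\alpha(G_2)}(\nu-\alpha m_1)=\frac{n_2}{\nu-\alpha m_1-t_2}$ into Theorem \ref{th102}, split off the factor $\nu-\alpha m_1-t_2$ of $\phi_{A_\alpha(G_2)}(\nu-\alpha m_1)$ to clear the denominator, and read off the factors.

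\textbf{The gap is in part (a).} It arises at exactly the step you flagged, from two arithmetic slips.

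First, at $t_1=1$ the base of the power is
\[
\nu+2-2\alpha-2\alpha t_1-\alpha n_2=\nu+2-4\alpha-\alpha n_2,
\]
not $\nu+2-3\alpha-\alpha n_2$.

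Second, take $\lambda_2(A_\alpha(P_2))=2\alpha-1$ in the quadratic. Its linear coefficient is $-\big(\alpha(3+n_2)-1+2(2\alpha-1)\big)=-(7\alpha-3+\alpha n_2)$. Its constant term is $-(1-\alpha)(1-3\alpha)+(3\alpha-1)(5\alpha-3+\alpha n_2)=(3\alpha-1)(4\alpha-2+\alpha n_2)$. Hence
\[
\nu^2-(7\alpha-3+\alpha n_2)\nu+(3\alpha-1)(4\alpha-2+\alpha n_2)=\big(\nu-(3\alpha-1)\big)\big(\nu+2-4\alpha-\alpha n_2\big).
\]
The second factor is precisely the base of $(\nu+2-4\alpha-\alpha n_2)^{m_1-n_1}=(\nu+2-4\alpha-\alpha n_2)^{-1}$. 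So the cancellation is exact and nothing is left over.

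As written, your argument does not close. You end with an uncancelled rational factor $(\nu-2\alpha-\alpha n_2)/(\nu+2-3\alpha-\alpha n_2)$; the sentence asserting it cancels while also saying the two linear forms differ contradicts itself. Your proposed reconciliation with the cubic cannot succeed either. Specialize the cubic of (b)(iv) at $t_1=1$, $n_1=2$, $m_1=1$, so that $\frac12 t_1n_1n_2=n_2$ and $\nu+2-2t_1-2\alpha-\alpha n_2=\nu-2\alpha-\alpha n_2$. This already gives the cubic of (a)(iii) verbatim. The count $1+(n_2-1)+3=n_1+m_1+n_2$ then shows there is no room to absorb any extra factor.

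With the corrected factorization, (a)(i)--(iii) follow immediately, and your fallback of recomputing $\det S$ directly is unnecessary.
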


\begin{corollary}\label{cor11.2}
    Consider a graph $G_1$ of order $n_1$, size $m_1$ and regularity $t_1$. Let $G_2=K_{a,b}$ be the complete bipartite graph of order $n_2 = a+b$. Then
   \begin{enumerate}[label={\upshape (\alph*)}]
        \item When $t_1 =1$, the $A_{\alpha}$-spectrum of $G_1 \underline{\vee}_T G_2$ comprises:
        \begin{enumerate}[label= {\upshape (\roman*)}]
            \item one simple eigenvalue $3\alpha -1$;
            \item one eigenvalue $\alpha (1+a)$ with multiplicity $b-1$;
            \item one eigenvalue $\alpha (1+b)$ with multiplicity $a-1$ and
            \item four eigenvalues obtained by solving the equation,
            \begin{align*}
                \big( {\nu}^2 &-\alpha (3 + n_2)\nu + {\alpha}^2n_2 -\alpha n_2+6\alpha -2 \big) \big( {\nu}^2 -\alpha(2 +n_2)\nu + {\alpha}^2(1+n_2) \\
                &+2\alpha ab -ab \big)-(1-\alpha)^2(\nu-\alpha -1) \big( (\nu-\alpha)n_2 -\alpha {n_2}^2 + 2ab \big)=0.
            \end{align*}
        \end{enumerate}

        \item When $t_1 \geq 2$, the $A_{\alpha}$-spectrum of $G_1 \underline{\vee}_T G_2$ comprises:
        \begin{enumerate}[label= {\upshape (\roman*)}]
            \item one eigenvalue $\alpha(2t_1+2+n_2) -2$ with multiplicity $m_1 -n_1$;
            \item one eigenvalue $\alpha (m_1 +a)$ with multiplicity $b-1$;
            \item one eigenvalue $\alpha (m_1 +b)$ with multiplicity $a-1$;
            \item eigenvalues obtained by solving the equation,
            \begin{align*}
                \resizebox{.023\textwidth}{!}{${\nu}^2$} &\resizebox{.65\textwidth}{!}{$-\big( \alpha(t_1 +n_2 +2)+t_1 -2 +2\lambda_i(A_{\alpha}(G_1))\big)\nu - t_1 (1-\alpha)(1-3\alpha)$}\\
                &\resizebox{.85\textwidth}{!}{$+\big(\alpha t_1 +\lambda_i(A_{\alpha}(G_1))\big)\big(t_1-3+3\alpha +\alpha n_2 + \lambda_i(A_{\alpha}(G_1))\big)=0 \enskip \text{for } i=2,3,\ldots,n_1 \enskip \text{and}$}
            \end{align*}
            \item four eigenvalues obtained by solving the equation,
            \begin{align*}
                \big( {\nu}^2 &-(\alpha( t_1 + n_2+2) +3t_1 -2)\nu -4t_1 +2t_1^2 +4\alpha t_1 +\alpha t_1 n_2 \\
                &+ 2\alpha t_1^2 + \alpha^2 t_1 n_2  \big) \big( {\nu}^2 -\alpha(2 m_1 + n_2)\nu + {\alpha}^2m_1(m_1 + n_2) +2\alpha ab -ab \big)\\
                &-\frac{1}{2}t_1n_1(1-\alpha)^2(\nu-t_1-\alpha t_1)\big((\nu-\alpha m_1)n_2 -\alpha n_2^2 +2ab \big)=0.
            \end{align*}
        \end{enumerate}
    \end{enumerate}
\end{corollary}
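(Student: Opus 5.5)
The plan is to specialize Theorem \ref{th102} to $G_2=K_{a,b}$, in the same way Corollary \ref{cor1.2} was derived from Theorem \ref{th1}. By Lemmas \ref{lem4} and \ref{lem6},
\[
\Upsilon_{A_\alpha(K_{a,b})}(\nu)=\frac{n_2\nu-\alpha n_2^2+2ab}{\nu^2-\alpha n_2\nu+(2\alpha-1)ab}.
\]
The factor $\phi_{A_\alpha(G_2)}(\nu-\alpha m_1)$ splits as
\[
(\nu-\alpha m_1-\alpha a)^{b-1}(\nu-\alpha m_1-\alpha b)^{a-1}\,q(\nu-\alpha m_1),
\]
where $q(\mu)=\mu^2-\alpha n_2\mu+(2\alpha-1)ab$ is the quadratic whose roots are the two extreme eigenvalues in Lemma \ref{lem4}. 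This already gives items (ii) and (iii) of both parts.

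Next I would clear the denominator of the last factor in \eqref{eq4007}. Multiplying that bracket by $q(\nu-\alpha m_1)$ cancels the denominator of $\Upsilon_{A_\alpha(G_2)}(\nu-\alpha m_1)$. The product becomes
\[
\Big((\nu-t_1-\alpha t_1)(\nu+2-2\alpha-2t_1-\alpha n_2)-2t_1(1-\alpha)^2\Big)\,q(\nu-\alpha m_1)-\tfrac12 t_1n_1(1-\alpha)^2(\nu-t_1-\alpha t_1)\big((\nu-\alpha m_1)n_2-\alpha n_2^2+2ab\big).
\]
Here $q(\nu-\alpha m_1)$ expands to $\nu^2-\alpha(2m_1+n_2)\nu+\alpha^2m_1(m_1+n_2)+2\alpha ab-ab$. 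Expanding the first bracket should give
\[
\nu^2-(\alpha(t_1+n_2+2)+3t_1-2)\nu+2t_1^2-4t_1+4\alpha t_1+\alpha t_1n_2+2\alpha t_1^2+\alpha^2t_1n_2,
\]
which is the quartic in (b)(v). The factors from $i=2,\dots,n_1$ are unchanged, which gives (b)(iv).

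For $t_1\ge2$, $G_1$ is not a tree, so $m_1\ge n_1$. The factor $(\nu+2-2\alpha-2\alpha t_1-\alpha n_2)^{m_1-n_1}$ then contributes $\alpha(2t_1+2+n_2)-2$ with multiplicity $m_1-n_1$, which is (b)(i).

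For $t_1=1$, $G_1=P_2$ with $n_1=2$, $m_1=1$, and spec$(A_\alpha(P_2))=\{1,2\alpha-1\}$. The exponent $m_1-n_1=-1$ means the factor $\nu+2-2\alpha-2\alpha-\alpha n_2$ must cancel against something. I expect this to be the main obstacle: the bookkeeping of the negative power. I would substitute $\lambda_2=2\alpha-1$ and $t_1=1$ into the $i=2$ quadratic, and expect it to factor as $(\nu-3\alpha+1)(\nu+2-4\alpha-\alpha n_2)$. Dividing by $\nu+2-4\alpha-\alpha n_2$ then leaves the simple eigenvalue $3\alpha-1$ in (a)(i).

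The quartic in (a)(iv) then comes from setting $t_1=1$, $n_1=2$, $m_1=1$ in the (b)(v) expression, so that $\frac12t_1n_1=1$. As a check, the first quadratic should reduce to $\nu^2-\alpha(3+n_2)\nu+\alpha^2n_2-\alpha n_2+6\alpha-2$. Finally I would count degrees to confirm the total is $n_1+m_1+n_2$: for $t_1=1$ this is $1+(b-1)+(a-1)+4=n_2+3$.
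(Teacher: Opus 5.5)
Your route is the one the paper intends: it gives no separate proof and relies on the template of Corollary~\ref{cor1.2}. Most of your steps check out. This includes items (a)(i)--(iii) and all of part (b): the expansion of the first factor in (b)(v), the expansion of $q(\nu-\alpha m_1)$, and your factorization of the $i=2$ quadratic at $t_1=1$, $\lambda_2=2\alpha-1$ as $(\nu-3\alpha+1)(\nu+2-4\alpha-\alpha n_2)$. That factorization correctly absorbs the factor carrying the exponent $m_1-n_1=-1$.

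The gap is your ``check'' for (a)(iv), which is false. Substituting $t_1=1$ into the first factor of the last bracket of \eqref{eq4007} gives
\[
(\nu-1-\alpha)(\nu-2\alpha-\alpha n_2)-2(1-\alpha)^2=\nu^2-\big(\alpha(3+n_2)+1\big)\nu+\alpha^2n_2+\alpha n_2+6\alpha-2 .
\]
This differs from the printed quadratic $\nu^2-\alpha(3+n_2)\nu+\alpha^2n_2-\alpha n_2+6\alpha-2$ by $-\nu+2\alpha n_2$. The same factor $(\nu-\alpha-1)(\nu-2\alpha-\alpha n_2)-2(1-\alpha)^2$ appears in Corollary~\ref{cor11.1}(a)(iii), so Theorem~\ref{th102} and your method are consistent; the statement itself is wrong, and no argument can prove (a)(iv) as printed.

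A counterexample: take $\alpha=0$ and $a=b=1$. Then $P_2\underline{\vee}_T K_{1,1}$ is the butterfly (two triangles sharing a vertex), whose adjacency spectrum is $\{\frac{1\pm\sqrt{17}}{2},1,-1,-1\}$. The printed quartic becomes $(\nu^2-2)(\nu^2-1)-2(\nu^2-1)=(\nu^2-1)(\nu^2-4)$. Together with $3\alpha-1=-1$, its roots sum to $-1$ rather than to the trace $0$. The corrected quartic gives $(\nu^2-1)(\nu^2-\nu-4)$, which reproduces the butterfly spectrum exactly.

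So your computation proves (a)(iv) only with the first quadratic replaced by $\nu^2-(\alpha(3+n_2)+1)\nu+\alpha^2n_2+\alpha n_2+6\alpha-2$. You should carry out this substitution explicitly rather than assert the expected outcome, and record the correction to the statement.
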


\begin{corollary}\label{cor11.3}
    \begin{enumerate}[label={\upshape (\alph*)}]
        \item Consider two regular graphs $G_1$ and $G_2$, and an arbitrary graph $H$. Then $A_{\alpha}$-cospectralness of $G_1$ and $G_2$ implies that $G_1\underline{\vee}_T H$ and $G_2 \underline{\vee}_T H$ are $A_\alpha$-cospectral.
        \item Consider a regular graph $G$. Suppose $H_1$ and $H_2$ are two graphs satisfying the condition $\Upsilon_{A_{\alpha}(H_1)}(\nu) = \Upsilon_{A_{\alpha}(H_2)}(\nu)$. Then $A_{\alpha}$-cospectralness of $H_1$ and $H_2$ implies that $G\underline{\vee}_T H_1$ and $G \underline{\vee}_T H_2$ are $A_\alpha$-cospectral.
    \end{enumerate}
\end{corollary}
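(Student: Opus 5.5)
Both parts follow by comparing the two characteristic polynomials given by Theorem \ref{th102}. That theorem writes $\phi_{A_{\alpha}(G \underline{\vee}_T H)}(\nu)$ as a product of five kinds of factor. The first is the power $(\nu+2-2\alpha-2\alpha t_1-\alpha n_2)^{m_1-n_1}$. The second is $\phi_{A_{\alpha}(H)}(\nu-\alpha m_1)$. The third is the product over $i=2,\ldots,n_1$ of quadratics whose coefficients depend only on $t_1$, $n_2$, $\alpha$ and $\lambda_i(A_{\alpha}(G))$. The last factor involves $t_1$, $n_1$, $n_2$, $\alpha$ and the coronal $\Upsilon_{A_{\alpha}(H)}(\nu-\alpha m_1)$. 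So it suffices to show that each of these ingredients agrees for the two graphs being compared.

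For part (a), let $G_1$ and $G_2$ be regular and $A_\alpha$-cospectral. Cospectral matrices have equal order, so $n(G_1)=n(G_2)$. Comparing traces gives $\alpha\sum d_i = \alpha \cdot 2m$, so the sizes agree. This trace argument is the delicate step when $\alpha=0$, since then $\operatorname{tr} A_{\alpha}$ is identically zero and says nothing about $m$. In that case I would instead use $\operatorname{tr} A_{\alpha}^2=\alpha^2\sum d_i^2+(1-\alpha)^2\, 2m$, which determines $m$ when $\alpha=0$. Since a $t$-regular graph on $n$ vertices has $m=nt/2$, equal $n$ and $m$ give equal regularity $t_1$.

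I must also check the ordering of eigenvalues in part (a). The index $i=1$ omitted from the product in Theorem \ref{th102} corresponds to the largest eigenvalue $\lambda_1(A_{\alpha}(G))=t_1$ of a regular graph. Hence the multisets $\{\lambda_i\}_{i\ge 2}$ coincide for $G_1$ and $G_2$. With $H$ fixed, $n_2$, $\phi_{A_{\alpha}(H)}$ and $\Upsilon_{A_{\alpha}(H)}$ are common to both sides, and $m_1$ is equal for both. Therefore every factor of the two polynomials coincides.

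For part (b), take $G$ fixed, so $n_1$, $m_1$, $t_1$ and the $\lambda_i(A_{\alpha}(G))$ are common to both sides. Cospectrality of $H_1$ and $H_2$ gives $\phi_{A_{\alpha}(H_1)}=\phi_{A_{\alpha}(H_2)}$ and equal orders $n_2$, and the hypothesis gives equal coronals. Substituting into Theorem \ref{th102}, the two polynomials agree identically as rational expressions, hence as polynomials.

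The theorem was derived under invertibility assumptions on the Schur complements, which could be a concern. However, it holds as a polynomial identity for all but finitely many $\nu$, hence for all $\nu$, so no real obstacle arises. The only genuine care point is the $\alpha=0$ size/regularity argument in part (a).
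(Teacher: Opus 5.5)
Your proposal is correct and follows the paper's route: the paper, using the same argument it gives for Corollary \ref{cor1.3}, notes that $A_\alpha$-cospectral regular graphs share order, size and regularity, then compares the characteristic polynomials from Theorem \ref{th102} factor by factor. You also justify the steps the paper only asserts, and these additions are sound: equal size via $\operatorname{tr}A_\alpha$ (or $\operatorname{tr}A_\alpha^2$ when $\alpha=0$), the identification $\lambda_1(A_\alpha(G))=t_1$ for the omitted index, and the passage from a rational-function identity to a polynomial identity.
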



\section{Concluding remarks}\label{sec8}
In this paper, we derived explicit expressions for the $A_\alpha$-characteristic polynomials and the corresponding spectra of the $Q$-vertex join, $Q$-edge join, $T$-vertex join and $T$-edge join of graphs when the base graph $G_1$ is regular. The results established here also offer a substantial computational simplification: they allow the $A_\alpha$-spectrum of large or structurally intricate join graphs to be determined directly from the spectra of the constituent graphs, avoiding the construction of large matrices and making spectral analysis efficient.

A noteworthy consequence of our work is that the signless Laplacian analogue results for the $Q$-vertex join and $Q$-edge join, which have not yet appeared in the literature, can be extracted from our expressions by taking $\alpha=\frac{1}{2}$. Very recently, Patil et al. \cite{spectra_t_vertex_edge_join} have computed the adjacency and signless Laplacian spectra for the $T$-vertex and $T$-edge join, and our results confirm their findings in those particular settings.

A natural direction for further work is to extend the results of this paper to the case where $G_1$ is irregular, since the lack of uniform degree structure in that setting introduces additional challenges and requires new techniques.


\section*{Statements and Declarations} 
\textbf{Competing interests:} The authors declare they have no competing interests.\par
\noindent\textbf{Availability of data and materials:} No data was used for the research described in the article.\par
\noindent\textbf{Funding information} There is no funding source.

\bibliographystyle{amsplain}
\bibliography{bibliojoin2}
\end{document}